
\documentclass[11pt,reqno,tbtags,a4paper]{amsart}
\usepackage{amssymb}
\usepackage{url}
\usepackage[square,numbers]{natbib}
\bibpunct[, ]{[}{]}{;}{n}{,}{,}

\title
{Graphons and cut metric on $\sigma$-finite measure spaces}

\date{5 August, 2016; revised 16 August 2016}

\author{Svante Janson}
\address{Department of Mathematics, Uppsala University, PO Box 480,
SE-751~06 Uppsala, Sweden}
\email{svante.janson@math.uu.se}
\urladdr{http://www.math.uu.se/svante-janson}

\subjclass[2010]{05C99; 05C80} 

\overfullrule 0pt 


\numberwithin{equation}{section}

\renewcommand\le{\leqslant}
\renewcommand\ge{\geqslant}

\allowdisplaybreaks





\theoremstyle{plain}
\newtheorem{theorem}{Theorem}[section]
\newtheorem{lemma}[theorem]{Lemma}
\newtheorem{proposition}[theorem]{Proposition}
\newtheorem{corollary}[theorem]{Corollary}

\theoremstyle{definition}
\newtheorem{example}[theorem]{Example}
\newtheorem{definition}[theorem]{Definition}

\newtheorem{remark}[theorem]{Remark}

\theoremstyle{remark}

\newenvironment{romenumerate}[1][-10pt]{
\addtolength{\leftmargini}{#1}\begin{enumerate}
 }{\end{enumerate}}

\newenvironment{PXenumerate}[1]{
\begin{enumerate}
 }{\end{enumerate}}

\newenvironment{PQenumerate}[1]{
\begin{enumerate}
 }{\end{enumerate}}

\newcounter{oldenumi}
\newenvironment{romenumerateq}
{\setcounter{oldenumi}{\value{enumi}}
\begin{romenumerate} \setcounter{enumi}{\value{oldenumi}}}
{\end{romenumerate}}

\newcounter{thmenumerate}
\newenvironment{thmenumerate}
{\setcounter{thmenumerate}{0}%
 \def\item{\par
 \refstepcounter{thmenumerate}\textup{(\roman{thmenumerate})\enspace}}
}
{}

\newcounter{xenumerate}   


\newcommand\pfitemx[1]{\par#1:}
\newcommand\pfitemref[1]{\pfitemx{\ref{#1}}}

\newcommand\step[2]{\smallskip\noindent\emph{Step #1: #2} \noindent}

\newcommand{\refT}[1]{Theorem~\ref{#1}}
\newcommand{\refC}[1]{Corollary~\ref{#1}}
\newcommand{\refL}[1]{Lemma~\ref{#1}}
\newcommand{\refR}[1]{Remark~\ref{#1}}
\newcommand{\refS}[1]{Section~\ref{#1}}
\newcommand{\refSS}[1]{Section~\ref{#1}}
\newcommand{\refP}[1]{Proposition~\ref{#1}}
\newcommand{\refD}[1]{Definition~\ref{#1}}
\newcommand{\refE}[1]{Example~\ref{#1}}



\newcommand\nopf{\qed}   



\newcommand{\sumko}{\sum_{k=0}^\infty}

\newcommand{\sumk}{\sum_{k=1}^\infty}

\newcommand\set[1]{\ensuremath{\{#1\}}}

\newcommand\xpar[1]{(#1)}
\newcommand\bigpar[1]{\bigl(#1\bigr)}

\newcommand\biggpar[1]{\biggl(#1\biggr)}

\newcommand\xcpar[1]{\{#1\}}

\newcommand\abs[1]{|#1|}
\newcommand\bigabs[1]{\bigl|#1\bigr|}
\newcommand\Bigabs[1]{\Bigl|#1\Bigr|}

\newcommand\lrabs[1]{\left|#1\right|}
\def\rompar(#1){\textup(#1\textup)}    
\newcommand\xfrac[2]{#1/#2}

\def\xexp(#1){e^{#1}}

\newcommand\frax[1]{\{#1\}}

\newcommand\ntoo{\ensuremath{{n\to\infty}}}
\newcommand\Ntoo{\ensuremath{{N\to\infty}}}

\newcommand\mtoo{\ensuremath{{m\to\infty}}}

\newcommand\ttoo{\ensuremath{{t\to\infty}}}

\newcommand\norm[1]{\|#1\|}
\newcommand\bignorm[1]{\bigl\|#1\bigr\|}
\newcommand\Bignorm[1]{\Bigl\|#1\Bigr\|}

\newcommand\punkt{.\spacefactor=1000}    
    
\newcommand\ie{i.e\punkt}
\newcommand\eg{e.g\punkt}
\newcommand\viz{viz\punkt}
\newcommand\cf{cf\punkt}
\newcommand{\as}{a.s\punkt}
\newcommand{\aex}{a.e\punkt}

\newcommand{\tend}{\longrightarrow}

\newcommand\vto{\overset{\mathrm{v}}{\tend}}
\newcommand\wto{\to}
\newcommand\wxto{\overset{\mathrm{w}*}{\tend}}
\newcommand\asto{\overset{\mathrm{a.s.}}{\tend}}
\newcommand\eqd{\overset{\mathrm{d}}{=}}

\newcommand\bbR{\mathbb R}

\newcommand\bbN{\mathbb N}

\newcounter{CC}
\newcounter{cc}

\newcommand\E{\operatorname{\mathbb E{}}}

\newcommand\Po{\operatorname{Po}}

\newcommand\supp{\operatorname{supp}}
\newcommand\sgn{\operatorname{sgn}}

\newcommand\gd{\delta}

\newcommand\gf{\varphi}

\newcommand\gG{\Gamma}

\newcommand\gs{\sigma}

\newcommand\eps{\varepsilon}

\renewcommand\phi{\xxx}  

\newcommand\cA{\mathcal A}
\newcommand\cB{\mathcal B}

\newcommand\cE{\mathcal E}
\newcommand\cF{\mathcal F}

\newcommand\cP{\mathcal P}

\newcommand\cS{{\mathcal S}}

\newcommand\cW{\mathcal W}

\newcommand\tG{\tilde G}

\newcommand\tS{{\tilde S}}

\newcommand\tV{\tilde V}
\newcommand\tW{\widetilde W}

\newcommand\bW{\overline W}

\newcommand\ett[1]{\boldsymbol1{\xcpar{#1}}}

\newcommand\etta{\boldsymbol1}

\newcommand\qw{^{-1}}
\newcommand\qww{^{-2}}
\newcommand\qq{^{1/2}}
\newcommand\qqw{^{-1/2}}

\newcommand\oi{\ensuremath{[0,1]}}

\newcommand\ooo{[0,\infty)}
\newcommand\ooox{[0,\infty]}

\newcommand\dd{\,\mathrm{d}}

\newcommand{\ui}{uniformly integrable}
\newcommand{\sui}{semiuniformly integrable}

\newcommand\rhs{right-hand side}

\newcommand\cn[1]{\norm{#1}\cut}
\newcommand\bigcn[1]{\bignorm{#1}\cut}
\newcommand\Bigcn[1]{\Bignorm{#1}\cut}
\newcommand\cnx[2]{\norm{#2}_{\square,#1}}
\newcommand\cut{_{\square}}
\newcommand\dcut{\delta_{\square}}
\newcommand\dl{\delta_{1}}
\newcommand\dlp{\delta_{p}}
\newcommand\mpp{measure-preserving}
\newcommand\mpm{\mpp{} map}
\newcommand\mpb{\mpp{} bijection}

\newcommand{\ps}{probability space}

\newcommand\equ{\cong} 
\newcommand\tensor{\otimes}
\newcommand\itensor{\check\otimes}
\newcommand\ptensor{\hat\otimes}
\newcommand\leb{\lambda}
\newcommand\lebb{\leb^2}
\newcommand\gsa{$\gs$-algebra}
\newcommand\bbRp{\bbR_+}
\newcommand\tmu{\tilde\mu}
\newcommand\hmu{\hat\mu}
\newcommand\tnu{\tilde\nu}
\newcommand\ZZ{Z}

\newcommand\sss{{\Omega}}
\newcommand\weakx{weak${}^*$}
\newcommand\gsf{$\sigma$-finite}
\newcommand\qpi[1]{^{\pi_{#1}}}
\newcommand\qphi[1]{^{\gf_{#1}}}
\newcommand\qpsi[1]{^{\psi_{#1}}}
\newcommand\qgf{^{\gf}}
\newcommand\hmux{\tmu}
\newcommand\CI{C_{\oi}}
\newcommand\CcI{C_{c,\oi}}
\newcommand\tg{\tilde g}
\newcommand\thx{\tilde h}
\newcommand\tr{\tilde r}
\newcommand\muoo{\mu_\infty}
\newcommand\KxK{{K\times K}}
\newcommand\SxS{{S\times S}}

\newcommand\cnc[1]{\norm{#1}_{C}}
\newcommand\norml[1]{\norm{#1}_{L^1}}

\newcommand\normlp[1]{\norm{#1}_{L^p}}
\newcommand\normloo[1]{\norm{#1}_{L^\infty}}
\newcommand\qx[1]{#1\times#1}
\newcommand\qxp[1]{(#1\times#1)}
\newcommand\qxq[1]{#1^2}
\newcommand\gfgf{(\gf_1,\gf_2)}
\newcommand\eeq{elementarily equivalent}
\newcommand\psiq{\psi^{\otimes 2}}
\newcommand\sz{^{\mathsf s}}
\newcommand\str[2]{\Upsilon_{#2}#1}
\newcommand\strp[2]{\str{(#1)}{#2}}
\newcommand\stri[2]{\Upsilon^{(1)}_{#2}#1}

\newcommand\strii[2]{\Upsilon^{(2)}_{#2}#1}
\newcommand\striip[2]{\strii{(#1)}{#2}}
\newcommand\xW{\overline W}
\newcommand\restr[1]{|_{#1}}
\newcommand\restrq[1]{\restr{\qx{#1}}}
\newcommand\NN{^{(N)}}
\newcommand\MM{^{(M)}}
\newcommand\lsmu{\ensuremath{L^1(S,\mu)}}

\newcommand\xV{W}
\newcommand\hU{\bar U}
\newcommand\urt{uniformly regular tails}
\newcommand\ucrt{uniformly cut regular tails}
\newcommand\ucr{upper cut regular}
\newcommand\cWpc{\mathcal W(p,C)}
\newcommand\VV{V^*}
\newcommand\Uc{U^{\textsf c}}
\newcommand\ettaqx[1]{\etta_{\qx{#1}}}
\newcommand\hW{\widehat W}
\newcommand\chW{\check W}
\newcommand\hS{\hat S}
\newcommand\chS{\check S}


\newcommand{\Holder}{H\"older}

\newcommand\ER{Erd\H os--R\'enyi}
\newcommand{\Lovasz}{Lov\'asz}

\hyphenation{Upp-sala semi-uni-formly}

\begin{document}

\begin{abstract} 
Borgs, Chayes, Cohn and Holden (2016+) recently extended the definition of
graphons from probability spaces to arbitrary $\sigma$-finite measure
spaces, in order to study limits of sparse graphs.
They also extended the definition of the cut metric, and proved various
results on the resulting metric space.

We continue this line of research and give various further results on
graphons and the cut metric in this general setting, extending known results
for the standard case of graphons on probability spaces.
In particular, we characterize pairs of equivalent graphons, and we give new
results on completeness and compactness.
\end{abstract}

\maketitle

\section{Introduction}\label{Sintro}

The theory of graph limits and graphons has become a successful tool to
study large dense graphs.
First, any sequence of graphs, with orders tending to infinity, has at least
a subsequence that converges to a 
\emph{graph limit}, which can be represented (non-uniquely) by a \emph{graphon},
which in this context is a \oi-valued symmetric function defined on $\qx S$
where $S$ is a probability space (that often is taken to be \oi).
Secondly, any such graphon $W$ defines a sequence of random graphs $G(n,W)$,
which gives a large family of dense random graphs with different properties.
See \eg{} \citet{LSz}, \citet{BCLSV1,BCLSV2}, \citet{Austin}, \citet{SJ209} and
\citet{Lovasz}.

There have been several partial extensions of the theory to sparse graphs,
using more general graphons.
\citet{BR09} considered graphons that are bounded (but not necessarily
\oi-valued), and this was extended by \citet{BCCZ14a} to unbounded graphons,
assuming that the graphons are integrable (and  usually in $L^p$ for some
$p>1$). 
These papers also consider signed graphons (in connection with weighted
graphs where the weigths may be negative).

Another leap in increasing generality was taken by
\citet{VR} and \citet{BCCH16}, with some special cases studied by
\citet{CaronFox} and \citet{HSM15}; 
the new idea is to let the graphons be defined on $\qx S$ for an arbitrary
\gsf{} measure space $S$ (and not just a \ps, as earlier);
it turns out that without loss of generality, the measure space $S$ can be
taken to be $\bbRp$ with Lebesgue measure \cite[Proposition 2.8]{BCCH16}.
(Only this case is considered in \cite{VR}.)
The graphons in \cite{VR} and \cite{BCCH16} are mainly \oi-valued and
generate random graphs by the construction described in \refSS{SSRG} below;
however,
\cite{BCCH16} considers also unbounded and signed graphons (that may occur as
limits of weighted graphs).
(The version of the construction in \cite{VR} also includes additional stars
and isolated edges; we do not treat these parts in the present paper.)

\citet{VR} is focussed on properties of the resulting random graphs, and in
particular the fact that, as a consequence of results by
\citet{Kallenberg1990,Kallenberg-symmetries}, all random graphs that are
exchangeable in a
certain sense can be obtained in this way.
\citet{BCCH16} contains related results on exchangeable random graphs, and
also many results on convergence of graphs and graphons in the cut metric
$\dcut$, as well as some results for the related metrics $\gd_1$ and $\gd_p$.

The present paper is mainly inspired by \citet{BCCH16}, and gives various
further results on convergence in the cut metric for  
(unbounded, possibly
signed) graphons defined on \gsf{} measure spaces.
We also give some related results for the metrics $\gd_1$ and $\gd_p$.
The results should be compared to the corresponding results for standard
graphons on \ps{s} in \cite{SJ249}.

Sections \ref{Sdef}--\ref{Stop} contain definitions,  some
earlier results and other preliminaries.

\refS{Seq} extends a result by \citet{BR09} to the present generality and
shows that for  Borel spaces,
the infimum in the definition of the cut distance is attained
(\refT{T1}).
This leads to a characterisation  (\refT{T=0}) of equivalent graphons on such
spaces as
having a pair of pullbacks that are \aex{} equal, also extending a result by
\cite{BR09}, and another, completely general, 
characterisation of equivalence (\refT{T=}) as
being generated by pull-backs and trivial extensions, extending
\cite[Theorem 8.3]{SJ249}. Several consequences of the latter
characterisation are also given.

\refS{Scomplete} gives results on completeness of sets for the cut metric
(\refT{TB}), after some preliminary results for the cut norm.
Several counter examples are also given, illustrating the conditions in the
theorem; the set of all graphons is, unfortunately, not complete.

Sections \ref{Scompact}--\ref{ScompactPf} give results on (relative)
compactness in the cut metric
that extend and improve results in \cite{BCCH16}.
We give a complete characterisation of totally bounded sets
(\refT{TD}); however, since we do not have a complete characterisation of
complete sets, we do not obtain a complete characterisation of 
(relatively) compact sets
of graphons without adding extra conditions (for example Theorems
\ref{TUC}--\ref{TC}).

\begin{remark}
  The present paper thus studies the cut metric for graphons on \gsf{}
  measure spaces.
Since graphs may be represented by graphons, this includes results on
convergence of graphs to graphons in this sense, see \refSS{SSWG}.
 Note, however, that the cut metric is only one of several
  conceivable metrics (or other ways of defining limits), 
see \eg{} \cite{BCLSV1}, \cite{BCLSV2}, \cite{BR09}, \cite{BCCH16}, \cite{VR2}.
In the standard case of \oi-valued graphons on \ps{s}, 
a number of different metrics and topologies are equivalent, basically
because they define compact topologies that are comparable and thus equal,
see \eg{} \cite{BCLSV1,BCLSV2}.
In extensions like the one treated here, compactness is lost, and there is
no reason to expect various notions to be equivalent,
although there are some partial results under extra assumptions, see \eg{}
\cite{BR09}.
On the contrary, there
are counter examples, see for example \cite[Proposition 2.24(iv)]{BCCH16},
showing that different notions of convergence are not equivalent.

The cut metric has been hugely successful in the standard setting, but it is
not at all clear that it is of equal importance in extensions like the one
studied here. (For one thing, the fact that the metric is not complete on
the space of graphons studied here, see \refS{Scomplete}, is a warning that
the definitions may be not optimal. 
Moreover,  \cite{VR} considers also some non-integrable graphons, although
the definition of the cut metric requires integrability.)
Nevertheless, the present paper considers  exclusively the cut
metric (and the related $\gd_1$ and $\gd_p$), 
hoping that this will inspire future studies of  other metrics and
modes of convergence for general graphons and (sparse) graphs.
\end{remark}

\section{Definitions and notation}\label{Sdef}

We follow \citet{BCCH16}, with minor variations in the notation.
For the readers convenience, and to set our notation, we repeat the basic
definitions in this section.
See \cite{BCCH16} for further details and references, and see also
\cite{SJ249} for 
further details in the (standard) special
case of probability spaces.

For any topological space $S$, 
 $\cB=\cB(S)$ denotes the Borel $\gs$-algebra on $S$.

$\leb$ denotes the Lebesgue measure 
on $\bbR$.

A \emph{measure space} is, as usual,  a triple $(S,\cF,\mu)$, where $S$ is a
set, 
$\cF$ a \gsa{} on $S$ and $\mu$ a (non-negative) measure on $(S,\cF)$.
We shall often omit $\cF$ and $\mu$ from the notation when they are clear
from the context and denote the measure space just by $S$.
(In contrast to \cite{BCCH16} which is more careful with the notation.)
In particular, we let
$\bbR_+:=\ooo$ denote the measure space $(\bbR_+,\cB,\leb)$, and similarly for
$\oi$ and other intervals  $[0,a]$ and $[0,a)$ with $0< a\le\infty$.

A \emph{subspace} of a measure space $(S,\cF,\mu)$ is a 
measure space $(A,\cF_A,\mu_A)$, where $A$ is a measurable subset of $S$, 
$\cF_A=\set{B\in\cF:B\subseteq A}$ and $\mu_A$ is the restriction of $\mu$
to $\cF_A$.

If $f_1:S_1\to\bbR$ and $f_2:S_2\to\bbR$ are two functions, then $f_1\tensor
f_2:S_1\times S_2\to\bbR$ is the function $f_1\tensor f_2(x,y):=f_1(x)f_2(y)$.

\subsection{Graphons}
A \emph{graphon} $W=(W,S)=(W,S,\cF,\mu)$ is a symmetric integrable function
$W:S\times S\to\bbR$, where $S=(S,\cF,\mu)$ is a \gsf{} measure space.
The space $S$,  its \gsa{} $\cF$ and its measure $\mu$ are important
components of the graphon, but  for convenience we often 
omit them from the notation. (Again, \cite{BCCH16} is more careful.)
We generally identify two graphons that are equal a.e.

Note that in the present paper, as in \cite{BCCH16}, in general, a graphon
is neither required to be bounded nor 
non-negative. 
Note also that
we assume our graphons to be integrable, as  in \cite{BCCH16} (with minor
exceptions, see  \cite[Remarks 2.3 and 2.25]{BCCH16}), while  \cite{VR}
allows for somewhat more general graphons, see 
\cite[Theorem 4.9]{VR}.

We repeat for emphasis that the essential feature of \cite{BCCH16} and the
  present paper is that $\mu$ is allowed to be any \gsf{} measure, and that
  the standard theory in \eg{} 
\cite{BCLSV1}, \cite{Lovasz}, \cite{SJ249}
is the special case when $\mu$ is a probability measure.

A \emph{trivial extension} of a graphon $(W,S,\mu)$ is a graphon
$(\tW,\tS,\tmu)$ 
such that the measure space $(S,\mu)$ is a subspace of $(\tS,\tmu)$ and
\begin{equation}
  \tW(x,y)=
  \begin{cases}
	W(x,y), & x,y\in S,
\\
0, &\text{otherwise}.
  \end{cases}
\end{equation}

\begin{remark}
  We assume, following \cite{BCCH16}, that the measure space where a
  graphon is defined is \gsf. This is mainly because
the standard construction of product measures such as $\qx\mu$ assumes $\mu$
to be \gsf,
since there are serious technical problems otherwise. 
(For example, Fubini's theorem may fail, see \eg{} \cite[Exercise 5.2.1]{Cohn}.)
Nevertheless, it is possible to consider more general measure spaces,
provided we only consider $W$ that vanish outside $S_1\times S_1$ for some
\gsf{} subset $S_1$ (which is reasonable since $W$ should be integrable);
then $W$ is a trivial extension of its restriction to $S_1$. 
We shall not treat this rather trivial extension of the definition
in general and leave it to the reader, but note that
an example of a non-\gsf{} measure space occurs in the proof of \refT{T1} below.
\end{remark}

\subsection{Cut Norm}
If $(S,\cF,\mu)$ is a \gsf{} measure space and $F\in
L^1(\SxS,\allowbreak\mu\times\mu)$, 
then the
\emph{cut norm} of $F$ is defined by
\begin{equation}\label{cn}
  \cn{F}
:= \sup_{T,U}\lrabs{\int_{T\times U} F(x,y)\dd\mu(x)\dd\mu(y)},
\end{equation}
taking the supremum over all measurable $T,U\subseteq S$.
We use also notations such as $\cnx{S}F$ or $\cnx{S,\mu}F$.
Note that 
\begin{equation}
  \label{cutl1}
\cn{F}\le \norm{F}_{L^1(\SxS)}.
\end{equation}
It is easily verified that all properties in 
\cite[Section 4 and Appendix E.1--E.2]{SJ249} 
hold also in the \gsf{} case studied here.
(This includes  other, equivalent,
versions of the cut norm.)
In particular, for any $F\in L^1(\SxS)$,
\begin{equation}\label{cawdor}
  \cn{F}=0\iff F=0 \quad\text{$\qxp{\mu}$-\aex} 
\end{equation}
Moreover,
\begin{equation}\label{macduff}
\cnx{S,\mu}{F}
=
 \lrabs{ \sup_{g,h} \int_{\SxS} F(x,y)g(x)h(y)\dd\mu(x)\dd\mu(y)}
\end{equation}
with the supremum taken over all measurable functions $g,h:S\to\oi$.
As a consequence, for any bounded $f_1,f_2:S\to\bbRp$,
\cf{} \cite[(4.5)]{SJ249},
\begin{equation}
  \label{malcolm}
\cn{f_1(x)f_2(y)F(x,y)}
\le
\normloo{f_1}\normloo{f_2} \cn{F}.
\end{equation}

\subsection{Measure-preserving maps and couplings}
If $\gf$ is a function $S_1\to S_2$, we define for any functions $f$ on $S_2$
and $W$ on $S_2^2$, the \emph{pull-backs}
$f\qphi{}(x):=f(\gf(x))$
and
$W\qphi{}(x,y):=W(\gf(x),\gf(y))$; these are functions on $S_1$ and $S_1^2$,
respectively. 

Similarly, if $\gf:S_1\to S_2$ is measurable, for two measurable spaces
$(S_i,\cF_i)$, and $\mu$ is a measure on $(S_1,\cF_1)$, then the
\emph{push-forward} of $\mu$ is the measure $\mu\qphi{}$ on $(S_2,\cF_2)$
defined 
by $\mu\qphi{}(A):=\mu(\gf\qw(A))$.
Note that $\int_{S_1} f\qphi{}\dd\mu=\int_{S_2}f\dd\mu\qphi{}$ for any
measurable function $f$ on $S_2$ and measure $\mu_1$ on $S_1$ such that one
of the integrals is defined (finite or $+\infty$).
Similarly, if $W\in L^1(\qx{S_2})$, then 
$\int_{\qxq{S_1}}W\qphi{}\dd\mu^2=\int_{\qxq{S_2}}W\dd\qxq{(\mu\qphi{})}$
and
\begin{equation}
  \label{dunsinane}
\cnx{{S_1},\mu}{W\qphi{}}=\cnx{S_2,\mu\qphi{}}{W}.
\end{equation}

A map $\gf:(S_1,\cF_1,\mu_1)\to(S_2,\cF_2,\mu_2)$ is \emph{\mpp} if it is
measurable and $\mu_1\qphi{}=\mu_2$.
Note that all properties in \cite[Section 5]{SJ249} hold also in the \gsf{} case
studied here.  

A \emph{coupling} of two measure spaces $(S_1,\cF_1,\mu_1)$
and $(S_2,\cF_2,\mu_2)$ is a pair $(\gf_1,\gf_2)$ of \mpm{s}
$\gf_i:S\to S_i$ defined on a common measure space $(S,\cF,\mu)$.
We consider in this paper only the \gsf{} case. (Note that $S$ automatically is
\gsf{} if $S_1$ or $S_2$ is.)
An important special case is when $S=S_1\times S_2$ and $\gf_i=\pi_i$, the
projection of $S_1\times S_2$ onto $S_i$, $i=1,2$; we call such couplings
\emph{special}.  In this case $\mu$ is
thus a measure on $S_1\times S_2$ such that $\mu\qpi i=\mu_i$; we call such
a measure $\mu$ a \emph{coupling measure} of $\mu_1$ and $\mu_2$.

If  $(\gf_1,\gf_2)$ is a general coupling of $S_1$ and $S_2$ with
$\gf_i:S\to S_i$, 
then 
$\gf:=(\gf_1,\gf_2)$ is a measurable map $S\to S_1\times S_2$, and
the push-forward measure $\mu\qphi{}$ is a coupling measure of $\mu_1$ and
$\mu_2$. 
Using this, it is easy to see that it suffices to consider special couplings
in, for example, \eqref{dcut1}, \eqref{dl} and \eqref{dp} below.
(In fact, \cite{BCCH16} consider only special couplings.)

Note that a coupling of $S_1$ and $S_2$ exists only if
$\mu_1(S_1)=\mu_2(S_2)$; in that case there always exist coupling measures,
see \cite[Lemma 3.2]{BCCH16}.

\subsection{The cut metric and equivalence}\label{SSdcut}
The cut metric $\dcut(W_1,W_2)$ for two graphons $W_1,W_2$, possibly defined
on different spaces, is defined by \cite{BCCH16} in two steps:
\begin{romenumerate}
\item \label{dcuti}
If $\mu_1(S_1)=\mu_2(S_2)$, then (as in the standard case of
  probability spaces, see \eg{} 
\cite{BCLSV1, Lovasz,SJ249})
\begin{equation}\label{dcut1}
  \dcut(W_1,W_2) :=\inf_{\gfgf}\cn{W_1\qphi1-W_2\qphi2},
\end{equation}
taking the infimum over all couplings $(\gf_1,\gf_2)$ of $S_1$ and $S_2$
(or, as in \cite{BCCH16}, only over special couplings).
\item \label{dcutii}
In general, take trivial extensions $(\tW_i,\tS_i,\tmu_i)$ of
$(W_i,S_i,\mu_i)$ such that 
$\tmu_1(\tS_1)=\tmu_2(\tS_2)$ and define
$\dcut(W_1,W_2):=\dcut(\tW_1,\tW_2)$.
\end{romenumerate}
It is shown in \cite{BCCH16} that this is well-defined, and that the cut metric
satisfies the triangle inequality and thus is a pseudo-metric.

\begin{remark}
   By \eqref{macduff}, for a special coupling with coupling
measure $\mu$
we have explicitly
\begin{multline}\label{yorick}
\cn{W_1\qpi1-W_2\qpi2}
=
\sup_{f,g} 
\biggl|\int_{(S_1\times S_2)^2}
\bigpar{W_1(x_1,y_1)-W_2(x_2,y_2)}\times
\\f(x_1,x_2)g(y_1,y_2)
\dd\mu(x_1,x_2)\dd\mu(y_1,y_2)
\biggr|,
\end{multline}
taking the supremum over measurable $f,g:S_1\times S_2\to\oi$.
\end{remark}

Two graphons $W_1$ and $W_2$ are \emph{equivalent} if $\dcut(W_1,W_2)=0$; in
this case we write $W_1\equ W_2$. (This is sometimes called 'weakly
equivalent'.) 
Since $\dcut$ is a pseudo-metric, $\equ$ is an equivalence relation,
and $\dcut$ is a metric on the set of equivalence classes.
When we talk about metric properties such as completeness and compactness
for $\dcut$,
this should be interpreted as properties in the metric space of equivalence
classes, but for convenience, we usually talk about graphons
rather than equivalence classes.

Note that if $\tW$ is a pull-back $W\qphi{}$ or a trivial extension of a
graphon $W$, then $\tW\equ W$.

We shall repeatedly use the following propositions
shown in \cite{BCCH16}:

\begin{proposition}[{\cite[Proposition 2.8]{BCCH16}}]\label{P2.8}
Every graphon is   equivalent to a graphon defined on the space $\ooo$.
\end{proposition}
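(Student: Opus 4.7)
The plan is to realise $(W,S,\cF,\mu)$ as a pull-back of a graphon on $\bbRp$, after first performing a trivial extension to arrange that the total measure is infinite.

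\emph{Step 1 (Make the total measure infinite).}
If $\mu(S)<\infty$, form the trivial extension $(\tW,\tS,\tmu)$ with $\tS=S\sqcup\bbRp$ (disjoint union, with the obvious \gsa{} and sum measure) and $\tW=0$ outside $\qx{S}$. Then $\tW\equ W$ by the definition of trivial extension, and $\tmu(\tS)=\infty$; so without loss of generality assume $\mu(S)=\infty$.

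\emph{Step 2 (Reduce to a countably generated sub-\gsa).}
The function $W\in L^1(\qx{S},\qx\mu)$ is an $L^1$-limit of simple functions $W_n=\sum_i c_i^{(n)}\etta_{A_i^{(n)}\times B_i^{(n)}}$ with $A_i^{(n)},B_i^{(n)}\in\cF$ of finite measure. Let $\cF_0\subseteq\cF$ be the \gsa{} generated by the countable collection $\set{A_i^{(n)},B_i^{(n)}}_{i,n}$. Then $\cF_0$ is countably generated, $(S,\cF_0,\mu\restr{\cF_0})$ is \gsf, and after modifying $W$ on a null set one may assume that $W$ is $\cF_0\tensor\cF_0$-measurable. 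The identity $(S,\cF,\mu)\to(S,\cF_0,\mu\restr{\cF_0})$ is \mpp, so $W$ on the original space is a pull-back of $W$ on the reduced space, and the two graphons are equivalent.

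\emph{Step 3 (Quotient to a standard Borel space).}
Fix a countable generating family $\set{C_k}_{k\in\bbN}$ of $\cF_0$ and define $\pi\colon S\to\set{0,1}^\bbN$ by $\pi(x)=\bigpar{\etta_{C_k}(x)}_k$. Let $\hat S=\pi(S)$, equipped with its trace Borel \gsa{} and push-forward measure $\hat\mu$. Since $W$ is $\cF_0\tensor\cF_0$-measurable, it is constant on products of $\cF_0$-atoms in $\qx{S}$, hence factors as $W=\hat W\circ(\pi\times\pi)$ for a symmetric measurable $\hat W\colon\qx{\hat S}\to\bbR$. The map $\pi$ is \mpp, so $(\hat W,\hat S,\hat\mu)$ is equivalent to $W$.

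\emph{Step 4 (Transport to $\bbRp$).}
The space $\set{0,1}^\bbN$ is Borel isomorphic to $\bbR$, so $\hat\mu$ is a \gsf{} Borel measure on a standard Borel space. By the usual atom-splitting construction (replace each atom of mass $c$ by a half-open interval of length $c$), there is a \mpb{} from $(\hat S,\hat\mu)$ to a measurable subset $T\subseteq\bbRp$ with the restricted Lebesgue measure. Pulling $\hat W$ back along this bijection and then trivially extending by $0$ to all of $\bbRp$ produces a graphon on $\bbRp$ equivalent to $W$.

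The main obstacle is the technical work in Steps 3--4, where one must handle two kinds of atomic behaviour: the ``degeneracy atoms'' of $\mu\restr{\cF_0}$ arising from points not separated by $\cF_0$ (absorbed by $\pi$), and the genuine mass atoms of $\hat\mu$ (which must be expanded into intervals in $\bbRp$). Both are standard facts from measure theory, but must be organised consistently with the \gsf{} framework so that the resulting composite map is genuinely \mpp.
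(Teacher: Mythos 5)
The paper does not prove this proposition itself; it quotes it from \cite[Proposition 2.8]{BCCH16}, whose argument runs along essentially the lines you propose (reduce to a countably generated sub-$\sigma$-algebra, push forward to a Borel space, identify that with an interval). So your outline is the right one. Steps 1--3 are sound modulo two points you should make explicit: (a) $\mu\restr{\cF_0}$ need not be \gsf{} as stated --- either enlarge the countable generating family by a sequence of finite-measure sets covering $S$, or observe that $W$ vanishes \aex{} off $\qx{S_0}$ where $S_0:=\bigcup_{i,n}(A_i^{(n)}\cup B_i^{(n)})\in\cF_0$ is \gsf{} for $\mu\restr{\cF_0}$, restrict there and trivially extend back; (b) $\pi(S)$ is in general only an analytic, not a Borel, subset of $\setoi^{\bbN}$, so it is cleaner to put the push-forward measure and $\hat W$ (extended by $0$) on all of $\setoi^{\bbN}$, which costs nothing since $\pi$ is still \mpp{} onto that space.

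The one step that fails as written is the claim in Step 4 that there is a \mpb{} from $(\hat S,\hat\mu)$ onto a subset of $\bbRp$: no bijection can carry a point of mass $c>0$ onto an interval of length $c$, so this is false whenever $\hat\mu$ has atoms, and ``pulling back along this bijection'' is then meaningless. What is true, and what you actually need, is a \mpm{} in the opposite direction, from $\bigpar{[0,\hat\mu(\hat S)),\leb}$ \emph{onto} $\hat S$, collapsing each interval onto the corresponding atom and using the Borel isomorphism theorem on the atomless part; the pull-back of $\hat W$ along that map is a graphon on an interval equivalent to $\hat W$, and a trivial extension to $\ooo$ finishes the proof. Alternatively, do what the present paper does at the start of the proof of \refT{T1}: replace $(\hat S,\hat\mu)$ by $(\hat S\times\oi,\hat\mu\times\leb)$ and pull $\hat W$ back along the projection; this space is atomless and Borel, so \cite[Lemma 3.1]{BCCH16} supplies a genuine \mpb{} onto $[0,\hat\mu(\hat S))$. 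With either fix Step 1 becomes superfluous, since one can map onto $[0,\hat\mu(\hat S))$ and trivially extend at the end.
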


\noindent
(Just as a graphon on a
  probability space is equivalent to a graphon on $\oi$, see \eg{} 
\cite[Section 7]{SJ249}.)

\begin{proposition}[{\cite[Proposition 4.3(c)]{BCCH16}}]\label{P4.3c}
If\/ $W_1$ and $W_2$ are graphons defined on $\bbRp$, then
\begin{equation}\label{dcutR}
  \dcut(W_1,W_2) :=\inf_{\gf}\cn{W_1-W_2\qgf},
\end{equation}
taking the infimum over all \mpb{s}  $\gf:\bbRp\to\bbRp$.
\end{proposition}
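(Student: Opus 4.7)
The inequality $\dcut(W_1,W_2)\le\inf_{\gf}\cn{W_1-W_2\qgf}$ is immediate: every measure-preserving bijection $\gf:\bbRp\to\bbRp$ induces the special coupling $(\mathrm{id}_{\bbRp},\gf)$ of $(\bbRp,\leb)$ with itself on the common space $(\bbRp,\leb)$, which is among the couplings considered in \eqref{dcut1} and gives cut-norm value $\cn{W_1-W_2\qgf}$. Taking the infimum over such $\gf$ yields this direction.

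For the reverse inequality, I will show that for any $\ep>0$ there exists a measure-preserving bijection $\gf$ with $\cn{W_1-W_2\qgf}\le\dcut(W_1,W_2)+O(\ep)$. Using the reduction to special couplings noted in the excerpt, pick a coupling measure $\nu$ on $\bbRp\times\bbRp$ with both marginals $\leb$ such that the cut-norm expression \eqref{yorick} is within $\ep$ of $\dcut(W_1,W_2)$. The strategy is to reduce the data to a finite transport problem and then realise it exactly by a bijection. Since $W_1,W_2\in L^1(\bbRp^2)$, pick $N$ with $\norml{W_i-W_i\ett{[0,N]^2}}<\ep$, then partition $[0,N]$ into $K$ equal subintervals $I_1,\dots,I_K$ with $K$ so large that the block averages $\bar W_i$ of the truncated graphons satisfy $\norml{W_i\ett{[0,N]^2}-\bar W_i}<\ep$. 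By \eqref{cutl1}, and since both $\leb$ on $\bbRp$ (under a bijective coupling) and $\nu$ on $\bbRp^2$ have Lebesgue marginals, these two replacements alter every relevant cut norm by at most $O(\ep)$.

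Now set $I_0:=(N,\infty)$ and $\nu_{jk}:=\nu(I_j\times I_k)$ for $j,k\in\{0,1,\dots,K\}$; the marginal conditions give $\sum_{k=0}^K\nu_{jk}=\leb(I_j)=\sum_{k=0}^K\nu_{kj}$. Subdivide each $I_j$ into measurable pieces $P_{j0},P_{j1},\dots,P_{jK}$ of measures $\nu_{j0},\nu_{j1},\dots,\nu_{jK}$, and similarly subdivide each $I_k$ into pieces $Q_{0k},\dots,Q_{Kk}$ of measures $\nu_{0k},\dots,\nu_{Kk}$; then define a measure-preserving bijection $\gf:\bbRp\to\bbRp$ by mapping $P_{jk}$ affinely onto $Q_{jk}$ for all $j,k$. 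By construction, for each pair $(j,k)$ the bijective coupling $(\mathrm{id},\gf)$ and the given coupling $\nu$ assign the same mass $\nu_{jk}$ to the rectangle $I_j\times I_k$. A bilinear argument via \eqref{macduff}, reducing the suprema on both sides to the same finite-dimensional optimisation over block-constant test functions, then gives $\cnx{\bbRp,\leb}{\bar W_1-\bar W_2\qgf}=\cnx{\bbRp^2,\nu}{\bar W_1\qpi1-\bar W_2\qpi2}$, and combining with the previous approximation estimates yields the required bound on $\cn{W_1-W_2\qgf}$.

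The main obstacle is this final identification. One must verify that, for block-constant graphons $\bar W_i$, the suprema in \eqref{macduff} in both the bijective and the coupling settings are attained by test functions depending on their arguments only through the block label in $\{0,1,\dots,K\}$, so that both sides reduce to the same bilinear function of $\{\nu_{jk}\}$; this is an averaging (Jensen-type) reduction that is standard but requires care. In addition, the "tail" blocks involving the infinite-measure interval $I_0$ must be handled so that $\gf$ is a well-defined bijection of all of $\bbRp$: since $\bar W_i$ vanish on those blocks, any measure-preserving bijection of $I_0$ onto itself can be inserted to complete the construction without affecting the cut norm.
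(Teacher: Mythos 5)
The paper does not actually prove this proposition: it is imported verbatim from \cite[Proposition 4.3(c)]{BCCH16}, so there is no internal proof to compare against. Your argument is a legitimate direct proof and is essentially correct. The easy direction is fine (note only that $(\mathrm{id},\gf)$ is a coupling in the sense of \eqref{dcut1} but not a \emph{special} one in the paper's terminology). For the hard direction, the discretization is sound: truncation and block-averaging cost only $O(\eps)$ in every relevant cut norm by \eqref{cutl1}, since all the maps involved are \mpp{} and hence preserve $L^1$ norms; and the reduction of \eqref{macduff} to block-constant test functions (conditional expectation onto the partition, which leaves the bilinear integral unchanged because the integrand is block-constant) does identify both cut norms with the same finite bilinear optimisation in the data $\set{\nu_{jk}}$. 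Two points deserve more care than you give them. First, the pieces $P_{jk},Q_{jk}$ should be taken to be intervals (or one should invoke the existence of a \mpb{} between Borel sets of equal measure, as in \cite[Lemma 3.1]{BCCH16}); ``affinely'' is not meaningful for arbitrary measurable pieces, and for the single infinite-mass pair $P_{00}\to Q_{00}$ one needs such an isomorphism rather than an affine map. Second, your closing remark about the tail is imprecise: $\gf$ does \emph{not} map $I_0$ onto itself, since the blocks $P_{0k}$ and $P_{j0}$ with $j,k\ge1$ carry finite mass between $I_0$ and $[0,N]$, and on those blocks the integrand $\bar W_1-\bar W_2\qgf$ is generally nonzero; what saves you is that these blocks contribute identically to both finite optimisations (the measures $\leb(P_{0k})=\nu_{0k}=\nu(I_0\times I_k)$ and the values match), while only the doubly-infinite block $P_{00}\times P_{00}$ is the one on which the integrand vanishes and which can be completed by an arbitrary measure isomorphism. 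With those repairs the proof goes through; it is the same discretization-and-rearrangement scheme used for the analogous statement on probability spaces in \cite[Section 6]{SJ249}.
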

\noindent
(In other words, in this case, the infimum in \eqref{dcut1} can be restricted
to couplings with $\gf_1$ the identity and $\gf_2$ a bijection.)

\begin{remark}
  \label{Rset}
We sometimes allow ourselves to talk about the set of all graphons,
ignoring the technical
set-theoretical fact that strictly speaking the graphons, as defined in this
paper, form a class and not a set. This can when necessary be circumvented by
the standard method of
restricting the allowed measure spaces $S$ to some sufficiently large set.
In particular, note that by \refP{P2.8}, the equivalence classes of graphons
form a set.
\end{remark}

\subsection{The invariant $L^1$ and $L^p$ metrics $\dl$ and $\dlp$}
The invariant $L^1$-metric $\dl(W_1,W_2)$ is defined by \cite{BCCH16}
in the same way as the cut metric, replacing 
\eqref{dcut1} in Case \ref{dcuti}, \ie{} when $\mu_1(S_1)=\mu_2(S_2)$, by
\begin{equation}\label{dl}
  \dl(W_1,W_2) :=\inf_{\gfgf}\norml{W_1\qphi1-W_2\qphi2},
\end{equation}
and again using trivial extensions as in \ref{dcutii} above for the general
case. 
It is shown in \cite{BCCH16} that this too is well-defined, and a
quasi-metric.
Note that \eqref{cutl1} implies
\begin{equation}\label{dcut<dl}
  \dcut(W_1,W_2)\le\dl(W_1,W_2).
\end{equation}

Moreover, \cite{BCCH16} more generally defines
the invariant $L^p$-metric $\dlp(W_1,W_2)$, where $1\le p<\infty$,
in the same way:
when $\mu_1(S_1)=\mu_2(S_2)$,
\begin{equation}\label{dp}
  \dlp(W_1,W_2) :=\inf_{\gfgf}\norm{W_1\qphi1-W_2\qphi2}_{L^p},
\end{equation}
and in general trivial extensions are used as in \ref{dcutii} above.
However, for $p>1$ we consider only graphons that satisfy
\begin{equation}\label{pcond}
W_i\in L^1(\qx{\mu_i}) \cap L^p(\qx{\mu_i})  
\quad\text{and}\quad
W_i\ge0;
\end{equation}
for such graphons, \cite{BCCH16} shows that 
$\dlp$ is well-defined and a quasi-metric.
\begin{remark}
  To understand the conditions \eqref{pcond},
first recall that we, and \cite{BCCH16}, assume that a
graphon is integrable, \ie, belongs to $L^1$. 
(As said in \refS{Sintro}, \cite{VR} allows somewhat more general graphons,
see also \cite[Remark 2.3]{BCCH16},
but it seems that the cut distance cannot be defined for them.)
Secondly, taking, for example, $W_2=0$, \eqref{dp} yields
$\dlp(W_1,0)=\norm{W_1}_{L^p}$, so we have to assume $W_1,W_2\in L^p$ in
order to have $\dlp(W_1,W_2)$ finite in general; conversely, 
if $W_1,W_2\in L^p$
then  \eqref{dp} yields
$\dlp(W_1,W_2)\le\norm{W_1}_{L^p}+\norm{W_2}_{L^p}<\infty$, so $\dlp$ is finite.
The third condition, $W_i\ge0$, is perhaps more surprising, but it is used in
the proof in \cite{BCCH16} that $\dlp$ is invariant under trivial
extensions, and it is, in fact, necessary for this when $p>1$, see \refE{Edp}.
\end{remark}

\begin{example}[for signed graphons, $\gd_p$ is in general not invariant
	under trivial extensions]
\label{Edp}
Let $W_1=1$ and $W_2=-1$, on the one-point set $\cS=\set{1}$ with measure 
$\mu\set{1}=1$.
Let $\tW_1$ and $\tW_2$ be the trivial extensions to
$\tS=\set{1,2}$, with  
$\tilde\mu\set{1}=\tilde\mu\set{2}=1$. 
Then $\delta_p(W_1,W_2)=\norm{W_1-W_2}_{p}=2$ but, letting
$\gs:\tS\to\tS$ denote the transposition $\gs(1)=2$, $\gs(2)=1$, 
\begin{equation}
\delta_p(\tW_1,\tW_2)
\le\normlp{\tW_1-\tW_2^{\gs}}
=2^{1/p}<\delta_p( W_1, W_2).
\end{equation}
(In fact, equality holds, since there are only two special couplings.)
Hence, 
without the positivity condition in \eqref{pcond},
$\gd_p$ is not preserved by trivial extensions.
\end{example}

As just said, $\dl$ and $\dlp$ (when defined) are quasi-metrics. Moreover,
as will be shown in Theorems \ref{T=} and \ref{T=p},
\begin{equation}\label{viola}
  \dcut(W_1,W_2)=0 \iff
\dl(W_1,W_2)=0 \iff
\dlp(W_1,W_2)=0,
\end{equation}
with the final equivalence assuming that $p>1$ and \eqref{pcond} holds.
Hence, the equivalence $W_1\cong W_2$ is also characterised by  
$\dl(W_1,W_2)=0$, and when \eqref{pcond} holds, by $\dlp(W_1,W_2)=0$.
Consequently $\gd_1$ is a metric on the set of equivalence classes of
graphons, and $\gd_p$ is a metric on the set of equivalence classes of
non-negative graphons in $L^p\cap L^1$. 

Furthermore, by \eqref{dcut<dl}, convergence in $\dl$ implies convergence in
$\dcut$. 
However, this fails for $\dlp$ with $p>1$, see \refE{Epconv}.

\begin{example}[convergence in $\dlp$ with $p>1$
does not imply convergence in cut norm]
\label{Epconv}
  Let $W_n:=n^{-2}\etta_{\qx{[0,n]}}$ on $\bbRp$.
Then, for any $p>1$,
\begin{equation}
\dlp(W_n,0)\le \normlp{W_n}=n^{-2(1-1/p)}\to0 \quad\text{as}\quad \ntoo.
\end{equation}
However,
$\dcut(W_n,0)=\inf_{\gf}\cn{W_n-0\qphi{}}=\cn{W_n}=\norml{W_n}=1$
for every $n$.
Thus convergence in $\dlp$ does not imply convergence in cut norm for any $p>1$.
(This is in contrast to the case of graphons on probability spaces, where
$\dlp\ge\dl\ge\dcut$ because $\normlp\cdot\ge\norml\cdot$.) 
\end{example}

\subsection{Stretched graphons and the stretched metrics $\dcut\sz$,
  $\dl\sz$, $\dlp\sz$}\label{SSstr}

\citet{BCCH16} introduce also a new rescaling of graphons called
\emph{stretching}. 

In general, 
given a graphon $W=(W,S,\mu)$ and $u>0$, we define the rescaled graphon
\begin{equation}\label{str1}
    \stri Wu :=(W,S,u\qq\mu).
\end{equation}
In other words, $\stri Wu$ equals $W$ as a function on $\qxq S$, but we
multiply the underlying measure by $u\qq$.

In the special (and standard) 
case $(S,\mu)=(\bbRp,\leb)$, we can alternatively keep
$(S,\mu)$ and define
\begin{equation}\label{str2}
  \strii Wu(x,y) :=W(u\qqw x,u\qqw y).
\end{equation}
It is easily seen that the two definitions are equivalent up to
equivalence: $\stri Wu\cong\strii Wu$, since
$\strii Wu=(\stri Wu)\qgf$
where $\gf:x\mapsto u\qqw x$ is a \mpm{}
$(\bbRp,\leb)\to(\bbRp,u\qq \leb)$.
(It is the version \eqref{str2} that motivates the name 'stretching'.)
Therefore,  the choice of version usually does not matter, and then we use
the notation 
$\str Wu$ for any of $\stri Wu$ and $\strii Wu$ (when defined).

Note that \eqref{str1} immediately implies
\begin{equation}\label{edmund}
\norml{\str Wu}=u\norml W.  
\end{equation}
As a consequence, again following \cite{BCCH16},
we can normalize any non-zero graphon $W$ to the
\emph{stretched graphon} $W\sz$ defined by
\begin{equation}
  W\sz:=\str{(W)}{\norml{W}\qw}
\end{equation}
with $\norml{W\sz}=1$. For completeness, we also define $W\sz=0$ when $W=0$ a.e.

Furthermore, \cite{BCCH16} define the \emph{stretched metric} $\dcut\sz$ by
\begin{equation}
  \dcut\sz(W_1,W_2):=\dcut(W_1\sz,W_2\sz).
\end{equation}
This is obviously a pseudo-metric on the set of all graphons, and thus a metric
on the corresponding set of equivalence classes; moreover
\begin{equation}\label{sebastian}
  \dcut\sz(W_1,W_2)=0 \iff
W_2\cong\strp {W_1}u \text{\quad for some $u>0$}.
\end{equation}

We can similarly define the pseudo-metrics
$
  \dl\sz(W_1,W_2):=\dl(W_1\sz,W_2\sz)
$ 
and, for $p>1$ and non-negative graphons in $L^p$,
$
  \dlp\sz(W_1,W_2):=\dlp(W_1\sz,W_2\sz).
$ 
As a consequence of \eqref{viola},
\begin{equation}\label{violas}
  \dcut\sz(W_1,W_2)=0 \iff
\dl\sz(W_1,W_2)=0 \iff
\dlp\sz(W_1,W_2)=0,
\end{equation}
with the final equivalence holding for $p>1$ and non-negative graphons in $L^p$.

\section{Graphons and graphs}\label{Sgraphs}

Although the present paper is mainly about graphons, it should be remembered
that the main motivation for studying graphons is the connection to (large)
graphs. 
For the standard case of dense graphs and graphons on a probability space,
see \eg{} \cite{LSz}, \cite{BCLSV1}, \cite{BCLSV2}  and the book
\cite{Lovasz}.

Two aspects of this  connection are treated separately in the following
subsections. 

\subsection{Graphons as limits of graphs}\label{SSWG}

Given a finite graph $G$, the corresponding graphon $W_G$ is defined by
considering the vertex set $V(G)$ as a probability space, with the uniform
measure, and defining $W_G$ on $\qxq{V(G)}$ by $W_G(x,y):=\ett{x\sim y}$.
(This is just the adjacency matrix.)
Alternatively, as is well-known, one can define an equivalent version of
$W_G$ on the standard space
$\oi$ by identifying the vertices of $G$ with disjoint intervals of lengths
$1/|V(G)|$, see \eg{} \cite{Lovasz}, \cite{SJ249}.

In the standard theory \cite{Lovasz}, one says that a sequence of graphs
$G_n$ converges to a graphon $W$ if $|V(G_n)|\to\infty$ and 
\begin{equation}\label{limd}
\dcut(W_{G_n},W)\to0.  
\end{equation}

In the case of sparse graphs, \ie, when the edge density
$|E(G_n)|/|V(G_n)|^2\to0$, \eqref{limd} just gives convergence to the
graphon 0.
In order to get interesting limits, 
\citet{BCCH16} propose instead using stretched
graphons (see \refSS{SSstr} above) and thus the condition
\begin{equation}
  \label{limstr}
\dcut\sz(W_{G_n},W)\to0.
\end{equation}

Given a graph $G$, we can also define a graphon $\xW_G$ by taking the same
function $W_G$ as above (\ie, the adjacency matrix) but consider it as a
graphon defined on the measure space $V(G)$ with the counting measure 
(\ie, each point has measure 1). If $G$ is a finite graph, then $\xW_G$ is a
stretching of $W_G$, see \eqref{str1}, and thus by
\eqref{sebastian}
\begin{equation}\label{malvolio}
  \dcut\sz(W_G,\xW_G)=0.
\end{equation}
Consequently, we can replace $W_{G_n}$ by $\xW_{G_n}$ in
\eqref{limstr}. 
(We can also use any other stretching, for example $W_{G_n}\sz$.)
One technical advantage of $\xW_G$ is that it also is defined for countable
infinite graphs $G$; however, since we want our graphons to be integrable,
we still have to assume that $G$ has only a finite number of edges.

\begin{remark}
 There is also another theory for sparse
  graphs due to \citet{BR09} and further developed by \citet{BCCZ14a}, where
  instead of 
stretching $W_{G_n}$, it is rescaled to $W_{G_n}/\norml{W_{G_n}}$. 
 As discussed in \cite{BCCH16}, 
it seems that the two theories have applications to different types of
sparse graphs. We shall not consider the theory of \cite{BR09} here.
\end{remark}

\subsection{Random graphs defined by graphons}\label{SSRG}
In the standard theory for graphons on a probability space, 
there is a standard definition of a
random graph $G(n,W)$ (with $n$ vertices) for a given graphon $W$ and any
$n\ge1$.
\citet{BCCH16} define a version of this for 
 the present setting as follows.
(Essentially the same construction is given by \citet{VR}.)

Let $W$ be a \oi-valued graphon on a measure space $(S,\mu)$, and assume
that $W$ is not 0 a.e. 
Consider a Poisson point process $\gG$ on $\ooo\times S$, with
intensity $\leb\times\mu$. 
A realization of $\gG$ is a countably infinite set of points
$\set{(t_i,x_i)}$.
Given such a realization, let
$\tG=\tG(W)$ be the infinite graph with 
vertex set $\set{(t_i,x_i)}$, where two vertices
$(t_i,x_i)$ and $(t_j,x_j)$ are connected by an edge with probability
$W(x_i,x_j)$, independently of all other edges (conditionally on $\gG$).

Moreover, let $\tG_t=\tG_t(W)$ be the induced subgraph of $\tG$ consisting
of all 
vertices $(t_i,x_i)$ with $t_i\le t$. (It is useful to think of the
parameter $t_i$ as the time the vertex is born; then $\gG_t$ is the subgraph
existing at time $t$.)

Finally, we let $G_t=G_t(W)$ be the induced subgraph of $\tG_t$ consisting
of all non-isolated vertices.
Note that the vertex set of $\tG_t$ is \as{} infinite for every $t>0$
if $\mu(S)=\infty$,
but the expected number of edges is 
\begin{equation}\label{EE}
  \E |E(\tG_t)|
=   \E |E(G_t)|
=\frac12 t^2\iint W(x,y)\dd\mu(x)\dd\mu(y)<\infty,
\end{equation}
so $G_t$ is \as{} finite for every $t<\infty$.

Note also that the definition defines growing processes $(\tG_t)_{t\ge0}$ and
$(G_t)_{t\ge 0}$ of random graphs. (With $\tG_0=G_0$ empty with no vertices.)

\begin{remark}
  The graphs are  usually regarded as unlabelled, so the identification
(labelling) of the vertices by points in $\ooo\times S$ is mainly for
convenience. Some, equivalent, 
interesting alternative labellings are the following.

\begin{romenumerate}
\item 
Since the measure $\mu$ is \gsf, the coordinates $t_i$ in the point process
$\gG=\set{(t_i,x_i)}$ are \as{} distinct.
Hence, we may just as well use $t_i$ as the label, and let the vertex set of
$\tG$ be $\set{t_i}$. (With the edge probabilities still given by the
$x_i$ as above.) 
The random graph $\tG$ then is exchangeable in the sense that its edge set
is an exchangeable point process on $\bbRp^2$, see further \citet{VR}, where
this property is explored in depth.

\item 
If the measure $\mu$ is atomless, then the coordinates $x_i$ are also \as{}
distinct, so we can use $x_i$ as label and regard the vertex set of the
random graphs defined above as (random) subsets of $S$. The vertex
set of $\tG_t$ then is a Poisson process on $S$ with intensity $t\mu$.
(If $\mu$ has atoms, then this vertex set generally has multiple points that
have to be distinguished.)

\item 
We may use an arbitrary measurable enumeration of the points in $\gG$ or
$\gG_t$ as $\set{(x_i,t_i)}$, and then use $i$ as the label; this means that
the vertex set of $\tG$ is $\bbN$.
The vertex set of $\tG_t$ is $\bbN$ if $\mu(S)=\infty$ and a random finite set
\set{1,\dots,N} when $\mu(S)<\infty$, with  $N\sim\Po(t\mu(S))$.
\end{romenumerate}
\end{remark}

\begin{remark}\label{Rstr}
  Two stretched graphons define the same random graphs up to a change of time.
In fact, if $u>0$ then,
by  the definition above and \eqref{str1},
 the random graphs $\tG_t(\stri Wu)$ are 
constructed using a Poisson point
process $\gG^{(u)}$ on $\ooo\times S$ with intensity $\leb\times u\qq\mu$.
The map $(t,x)\mapsto (u\qq t,x)$ maps this to the Poisson process $\gG$
with intensity $\leb\times\mu$, and thus 
\begin{equation}\label{maria}
  \tG_t(\str Wu) \eqd \tG_{u\qq t}(W),
\end{equation}
in the strong sense that both sides have the same distribution as processes on
\set{t\ge0}. 
\end{remark}

For the limit theory, we consider the corresponding graphons defined in
\refSS{SSWG}. We have $\dcut\sz(W_{G_t},\xW_{G_t})=0$ by \eqref{malvolio}.
Moreover, since $G_t$ is obtained from $\tG_t$ by deleting isolated
vertices,
$\xW_{\tG_t}$ is a trivial extension of $\xW_{G_t}$, and thus
$\dcut(\xW_{G_t},\xW_{\tG_t})=0$, which implies
$\dcut\sz(\xW_{G_t},\xW_{\tG_t})=0$, see \eqref{sebastian}. 
Hence, when using the stretched metric $\dcut\sz$, it does not matter
whether we use $\xW_{\tG_t}$, $\xW_{G_t}$ or $W_{G_t}$.

\citet{BCCH16} prove that the graphs $G_t(W)$ \as{} converge to $W$ in the
strectched metric $\dcut\sz$, \ie, as \ttoo,
\begin{equation}
  \label{orsino}
\dcut\sz(\xW_{\tG_t},W)
=\dcut\sz(\xW_{G_t},W)
=\dcut\sz(W_{G_t},W)
\to0.
\end{equation}

\section{Topological preliminaries}\label{Stop}
Although the definitions and main results are purely measure-theoretic and
do not involve any topology, we shall use some topological notions in some
results and proofs. We use various standard results that can be found
in several references; for convenience we give some specific references to
\cite{Cohn}.

A \emph{Polish} space is a complete separable metric space. 
(Or, more generally, a topological space homeomorphic to such a space.) 

A measurable space is \emph{Borel} 
(also called \emph{standard} \cite{Cohn} or \emph{Lusin} \cite{DM})
if it is isomorphic to a Borel
subset of a Polish space with its Borel $\gs$-field. 
In fact, a Borel measurable space is either isomorphic to $\oi$ (with the
usual Borel $\gs$-field) or countable (with every subset measurable).
A measure space $(\sss,\cF,\mu)$ is
\emph{Borel} if 
$(\sss,\cF)$ is a Borel measurable space; equivalently, if the measure space 
is isomorphic
to a Borel subset of a Polish space equipped with a Borel measure.
See further \cite[Appendix A.2]{SJ249}.

\emph{Compact} and \emph{locally compact} spaces have the standard
definitions. We consider only Hausdorff spaces;
as said above, properties of the cut metric should be interpreted
in the metric space of equivalence classes of
graphons.

\emph{Second countable} also has its standard definition, \ie, that the
topology has a countable basis. Recall that a compact space is second
countable if and only if it is metrizable \cite[Proposition 7.1.12]{Cohn}.

If $K$ is a compact  space, then 
$C(K)$ is
the Banach space of continuous functions $K\to\bbR$.
If $X$ is a locally compact space, then $C_c(X)$ is the space of continuous
functions $f:X\to\bbR$ with compact support 
\begin{equation}
\supp(f):=\overline{\set{x\in	X:f(x)\neq0}}.
\end{equation}
Furthermore, we let $\CI(K)$ and $\CcI(X)$ denote the subsets of functions with
values in \oi.

If $X$ is locally compact and second countable, then a \emph{Radon} measure
on $X$ is a Borel measure $\mu$ such that $\mu(K)<\infty$ for every compact
$K\subseteq X$. A Radon measure $\mu$ defines a positive linear functional
$f\mapsto\int_Xf\dd\mu$ on $C_c(X)$, and by the Riesz representation
theorem, this yields a one-to-one correspondence between Radon measures and
positive linear functionals on $C_c(X)$
\cite[Theorem 7.2.8]{Cohn}. 
(This extends to general locally compact spaces
if one considers only measures that are \emph{regular}
\cite[Section 7.2]{Cohn}; we only need this for second countable spaces,
and then regularity is automatic \cite[Proposition 7.2.3]{Cohn}.)

Note that a locally compact second countable space is $\gs$-compact. Hence a
Radon measure on such a space is \gsf.

We say that a sequence $\mu_n$
of Radon measures on a locally compact second
countable space $X$ converges \emph{vaguely} to a Radon measure $\nu$ if
$\int_X f\dd\mu_n\to\int_X f\dd\nu$ for every $f\in C_c(X)$; this is denoted
by $\mu_n\vto\nu$. (See \cite[Theorem A2.3]{Kallenberg} for some 
properties of the vague topology.)

We shall use the following simple lemma. It is presumably well-known, but we
have not found a reference so for completeness we include a proof.
(We state it for one vaguely convergent sequence. The lemma and its proof
generalize to two vaguely convergent sequences on two, possibly
different, spaces $X$ and $Y$; this says that the product operation is
vaguely continuous for Radon measures on locally compact second countable
spaces.)

\begin{lemma}\label{Lvague}
  Let $X$ be a  locally compact second countable space, 
and let $\mu_n$, $n=1,2,\dots,\infty$,  be Radon
  measures on $X$ such that $\mu_n\vto\muoo$ as \ntoo.
Then $\mu_n\times\mu_n\vto\muoo\times\muoo$ on $X\times X$.
\end{lemma}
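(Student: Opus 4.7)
The plan is to verify vague convergence directly from the definition: for every $f\in C_c(X\times X)$, one must show
\[
\int_{X\times X} f\dd(\mu_n\times\mu_n)\to\int_{X\times X} f\dd(\muoo\times\muoo).
\]
I would first dispose of the easy case of product test functions. If $f(x,y)=g(x)h(y)$ with $g,h\in C_c(X)$, then by Fubini
\[
\int f\dd(\mu_n\times\mu_n)=\Bigl(\int g\dd\mu_n\Bigr)\Bigl(\int h\dd\mu_n\Bigr),
\]
and the two factors converge to $\int g\dd\muoo$ and $\int h\dd\muoo$ respectively by vague convergence, so the product converges to $\int f\dd(\muoo\times\muoo)$. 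By linearity, the same holds for every finite linear combination $\sum_i g_i\otimes h_i$ with $g_i,h_i\in C_c(X)$.

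Next I would reduce the general case to this one via an approximation argument. Given $f\in C_c(X\times X)$ with support $K$, let $L:=\pi_1(K)\cup\pi_2(K)$, a compact subset of $X$ with $K\subseteq L\times L$. Choose a slightly larger compact neighbourhood $L'\supseteq L$ and, by Urysohn (valid since $X$ is locally compact and second countable, hence metrizable on compacta), a function $\chi\in C_c(X)$ with $\chi=1$ on $L$, $0\le\chi\le 1$ and $\supp\chi\subseteq L'$. The algebra generated by products $g(x)h(y)$ with $g,h\in C(L')$ separates points and contains constants, so by Stone--Weierstrass it is uniformly dense in $C(L'\times L')$. Given $\eps>0$, pick $\sum_{i=1}^m g_i\otimes h_i$ with $g_i,h_i\in C(L')$ approximating $f$ uniformly on $L'\times L'$ to within $\eps$; replacing $g_i,h_i$ by $\chi g_i,\chi h_i$ (extended by $0$) I obtain $\tg_i,\thx_i\in C_c(X)$, and since $\chi(x)\chi(y)=1$ on $K$ and $f$ vanishes off $K$, the resulting $F:=\sum_i \tg_i\otimes\thx_i$ satisfies $\|f-F\|_\infty\le\eps$ with $\supp F\subseteq L'\times L'$.

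The last ingredient is a uniform bound on the masses $\mu_n(L')$. Picking another cutoff $\psi\in C_c(X)$ with $\psi\ge \etta_{L'}$, vague convergence gives $\mu_n(L')\le\int\psi\dd\mu_n\to\int\psi\dd\muoo<\infty$, so $M:=\sup_n\mu_n(L')<\infty$. Combining everything,
\[
\Bigabs{\int f\dd(\mu_n\times\mu_n)-\int F\dd(\mu_n\times\mu_n)}\le\eps\,\mu_n(L')^2\le\eps M^2,
\]
and similarly for $\muoo$; meanwhile $\int F\dd(\mu_n\times\mu_n)\to\int F\dd(\muoo\times\muoo)$ by the product-function case. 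Letting $n\to\infty$ and then $\eps\to 0$ yields the desired convergence. The main technical point is the joint control of the approximation and the compact supports so that the Stone--Weierstrass step produces a function in $C_c(X\times X)$ of the product form, while vague convergence simultaneously delivers the uniform mass bound needed to pass from sup-norm approximation to integral approximation.
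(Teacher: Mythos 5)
Your proof is correct and follows essentially the same route as the paper's: Stone--Weierstrass approximation of $f$ by sums of products on a compact neighbourhood of its support, a cutoff in $C_c(X)$ to make the approximants compactly supported, Fubini plus vague convergence for the product terms, and a uniform mass bound (obtained by testing $\mu_n$ against a dominating $C_c$ function) to control the error term before letting $\eps\to0$. The only cosmetic difference is that you bound the remainder via $\eps\,\mu_n(L')^2$ directly from its support, while the paper dominates it by $\eps\,\psi_1(x)\psi_1(y)$ and integrates; these are the same estimate.
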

\begin{proof}
  Note that since the spaces are second countable, the Borel \gsa{}
  $\cB(X\times X)=\cB(X)\times\cB(X)$.

Let $f\in C_c(X\times X)$.  Then there exists a compact set $K\subseteq X$ such
that $\supp(f)\subseteq K\times K$. Let $K_1$ be a compact subset of $X$ with
$K\subseteq K_1^\circ$, the interior of $K_1$. Then there exists a function
$\psi\in \CcI(X)$ such that $\supp(\psi)\subseteq K_1$
and $\psi(x)=1$ for $x\in K$.

The set of linear combinations $\sum_1^N g_i(x)h_i(y)$ with $g_i,h_i\in
C(K_1)$ is dense in $C(K_1\times K_1)$, for example as a consequence of the
Stone--Weierstrass theorem \cite[V.8.1]{Conway}.
Hence, given any $\eps>0$, there exists such a linear combination
$f_\eps(x,y)=\sum_1^N g_i(x)h_i(y)$ with 
$\sup_{K_1\times  K_1}|f(x,y)-f_\eps(x,y)|<\eps$. 
Let $r_\eps:=f-f_\eps$.
Since $f(x,y)\neq0$ implies $x,y\in K$ and thus $\psi(x)=\psi(y)=1$, it
follows that for $x,y\in K_1$,
\begin{equation}\label{macbeth}
  \begin{split}
  f(x,y)&
=\psi(x)\psi(y)f(x,y) 
= \psi(x)\psi(y)f_\eps(x,y)+\psi(x)\psi(y)r_\eps(x,y)
\\&
=\sum_1^N \tg_i(x)\thx_i(y) +\tr_\eps(x,y),	
  \end{split}
\end{equation}
where $\tg_i(x):=\psi(x)g_i(x)$ and similarly for $\thx_i(x)$ and
$\tr_\eps(x,y)$. 
We extend $\tg_i$ and $\thx_i$ to $X$ by letting them be 0 outside $K_1$;
since $\supp(\psi)\subseteq K_1$, then $\tg_i,\thx_i\in C_c(X)$.
We similarly define $\tr_\eps(x,y)=0$ when $(x,y)\notin K_1\times K_1$.
Then \eqref{macbeth} holds for all $x,y\in X$, with $|\tr_\eps(x,y)|<\eps$.
Since $\mu_n\vto\muoo$, we have, as \ntoo, for each $i\le N$,
{\multlinegap=0pt
\begin{multline*}
\int_{X\times X} \tg_i(x)\thx_i(y)\dd\mu_n(x)\dd\mu_n(y)  
= \int_X\tg_i(x)\dd\mu_n(x)\int_X\thx_i(y)\dd\mu_n(y)
\\
\to
 \int_X\tg_i(x)\dd\muoo(x)\int_X\thx_i(y)\dd\muoo(y)
= \int_{X\times X} \tg_i(x)\thx_i(y)\dd\muoo(x)\dd\muoo(y)  
\end{multline*}}
and hence by \eqref{macbeth},
{\multlinegap=0pt
\begin{multline}\label{duncan}
\int_{X\times X} f(x,y)\dd\mu_n(x)\dd\mu_n(y)  
-\int_{X\times X} f(x,y)\dd\muoo(x)\dd\muoo(y)  
\\
= 
\int_{X\times X} \tr_\eps(x,y)\dd\mu_n(x)\dd\mu_n(y)  
-\int_{X\times X} \tr_\eps(x,y)\dd\muoo(x)\dd\muoo(y)  
+o(1).
\end{multline}}
Furthermore,
choose a function $\psi_1\in C_c(X)$ such that $\psi_1(x)=1$ when $x\in K_1$.
Then $|\tr_\eps(x,y)|\le \eps \psi_1(x)\psi_1(y)$, and thus
\begin{equation}
  \begin{split}
\lrabs{  \int_{X\times X} \tr_\eps(x,y)\dd\mu_n(x)\dd\mu_n(y)}
&\le  \int_{X\times X}\eps \psi_1(x)\psi_1(y)\dd\mu_n(x)\dd\mu_n(y)
\\&
= \eps\biggpar{ \int_{X} \psi_1\dd\mu_n}^2,
\qquad\qquad n\le\infty.
  \end{split}
\end{equation}
Moreover,
$\int_{X} \psi_1\dd\mu_n \to \int_{X} \psi_1\dd\muoo<\infty$,
and thus there exists a constant $M$ (independent of $\eps$) such that
$\int_{X} \psi_1\dd\mu_n \le M$ for all $n\le\infty$.
As a result, if $R_n$ is the \rhs{} of \eqref{duncan}, then
$|R_n|\le 2M^2\eps+o(1)$ and thus $\limsup_{\ntoo} |R_n|\le 2M^2\eps$.
Since $\eps$ is arbitrary, this yields $R_n\to0$. Consequently,
\eqref{duncan} shows that, 
as \ntoo,
\begin{multline}
\int_{X\times X} f \dd(\mu_n\times\mu_n)
=\int_{X\times X} f(x,y)\dd\mu_n(x)\dd\mu_n(y)
\\
\to\int_{X\times X} f(x,y)\dd\muoo(x)\dd\muoo(y)
= \int_{X\times X} f \dd(\muoo\times\muoo).
\end{multline}
Since $f\in C_c(X)$ is arbitrary, this shows
$\mu_n\times\mu_n\vto\muoo\times\muoo$. 
\end{proof}

\begin{lemma}  \label{Lccut}
Let $\mu$ be a finite Borel measure on  a compact metric space $X$. 
Then, for any $F\in L^1(\KxK)$,
\begin{equation}\label{banquo}
\cnx{K,\mu}{F}
=
 \sup_{g,h\in\CI(K)}\lrabs{ \int_{K\times K} F(x,y)g(x)h(y)\dd\mu(x)\dd\mu(y)}.
\end{equation}
\end{lemma}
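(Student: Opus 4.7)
The plan is to deduce this from the characterization \eqref{macduff}, which gives the same identity but with the supremum over all measurable $[0,1]$-valued functions. One direction is free: since every continuous $[0,1]$-valued function is measurable and $[0,1]$-valued, the supremum over $\CI(K)\times\CI(K)$ is at most the supremum in \eqref{macduff}, which equals $\cnx{K,\mu}{F}$. Hence only the reverse inequality requires work, and for this I will approximate an arbitrary pair of measurable $[0,1]$-valued functions $g,h$ by continuous $[0,1]$-valued functions in a manner that passes to the limit inside the integral.

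The key analytic input is that a compact metric space $K$ is Polish, so every finite Borel measure $\mu$ on $K$ is Radon, and consequently $C(K)$ is dense in $L^1(K,\mu)$ (this is the standard application of Lusin's theorem, or equivalently of regularity of Radon measures on a compact metric space). Given measurable $g:K\to\oi$, I will pick continuous $g_n\in C(K)$ with $g_n\to g$ in $L^1(\mu)$, pass to a subsequence so that $g_n\to g$ $\mu$-a.e., and then truncate via $\tg_n:=(g_n\vee 0)\wedge 1\in\CI(K)$. Since the truncation is $1$-Lipschitz and $g$ already lies in $\oi$, we have $|\tg_n-g|\le|g_n-g|$, so $\tg_n\to g$ $\mu$-a.e. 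Apply the same procedure to $h$ to produce $\thx_n\in\CI(K)$ with $\thx_n\to h$ $\mu$-a.e.

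Then by Fubini (or simply the observation that if $\mu(A)=\mu(B)=0$ then $\mu\times\mu(A\times K\cup K\times B)=0$), the product $\tg_n(x)\thx_n(y)\to g(x)h(y)$ for $(\mu\times\mu)$-a.e.\ $(x,y)\in K\times K$. The integrand $\tg_n(x)\thx_n(y)F(x,y)$ is dominated by $|F|\in L^1(\KxK,\mu\times\mu)$, so dominated convergence yields
\begin{equation*}
\int_{\KxK} F(x,y)\tg_n(x)\thx_n(y)\dd\mu(x)\dd\mu(y)
\longrightarrow
\int_{\KxK} F(x,y)g(x)h(y)\dd\mu(x)\dd\mu(y).
\end{equation*}
Consequently, the modulus of the right-hand side is at most the supremum on the \rhs{} of \eqref{banquo}. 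Taking the supremum over all measurable $g,h:K\to\oi$ and invoking \eqref{macduff} gives the opposite inequality $\cnx{K,\mu}{F}\le\sup_{g,h\in\CI(K)}|\cdots|$, completing the proof.

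There is no real obstacle: the main care is to ensure that the approximating functions take values in $\oi$ (handled by the truncation, which is lossless because $g,h$ themselves lie in $\oi$) and that we can promote $L^1$-approximation to a.e.\ convergence by extracting a subsequence. Once both hold, dominated convergence with the integrable dominant $|F|$ closes the argument cleanly.
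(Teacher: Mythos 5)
Your proof is correct, and it rests on the same essential ingredient as the paper's: the density of $C(K)$ in $L^1(K,\mu)$ for a finite Borel measure on a compact metric space, followed by truncation of the approximants into $\oi$. The only genuine difference is in how the limit is passed. The paper approximates the indicators $\etta_T,\etta_U$ in $L^1(\mu)$ and bounds the error by $2M\mu(K)\eps$, which forces a preliminary reduction to bounded $F$ (with $|F|\le M$) and then a second step truncating a general $F\in L^1$ to $F_M$ and using $\norml{F-F_M}\to0$ together with the triangle inequality for the cut norm. You instead upgrade the $L^1$-approximation to $\mu$-a.e.\ convergence along a subsequence and invoke dominated convergence with the integrable dominant $|F|$ on $K\times K$; since the approximants are uniformly bounded by $1$, this absorbs the unbounded case in one stroke and eliminates the two-step structure. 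A further cosmetic difference is that you work from \eqref{macduff} (measurable $\oi$-valued $g,h$) on both sides, whereas the paper works from the original definition \eqref{cn} with indicators for the hard direction; both are legitimate since the paper records \eqref{macduff} as available in the $\sigma$-finite setting. Your a.e.-convergence bookkeeping for the product $\tg_n(x)\thx_n(y)$ (the exceptional set $(A\times K)\cup(K\times B)$ is $\mu\times\mu$-null because $\mu(K)<\infty$) is exactly the point that needs checking, and you check it.
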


\begin{proof}
Temporarily denote the \rhs{} of \eqref{banquo} by $\cnc F$.
Then, by \eqref{macduff}, 
$\cnc F\le \cn F$, 
so it suffices to prove the opposite inequality.

Suppose first that $F$ is bounded, say $|F(x,y)|\le M$ for some $M$.
Let $T,U\subset S$ be measurable, and 
let $\eps>0$.
Since $C(K)$ is dense in $L^1(K,\mu)$,
see \cite[Proposition 7.4.2 (and 7.2.3)]{Cohn},
there exists a function $g\in C(K)$ such that $\norml{\etta_T-g}<\eps$.
By replacing $g(x)$ by $\min(1,\max(0,g(x)))$, we may further assume that
$g\in\CI(K)$. Similarly, there exists $h\in\CI(K)$ such that
$\norml{\etta_U-h}<\eps$. 
Then
\begin{equation*}
\bigabs{  F(x,y)\bigpar{\etta_T(x)\etta_U(y)-g(x)h(y)}}
\le M\bigpar{\abs{\etta_T(x)-g(x)}+ \abs{\etta_U(u)-h(y)}}
\end{equation*}
and thus
\begin{equation}
\int_{\KxK}\bigabs{ F(x,y)\bigpar{\etta_T(x)\etta_U(y)-g(x)h(y)}}
\dd\mu(x)\dd\mu(y)
\le 2M\mu(K)\eps.
\end{equation}
Hence,
\begin{equation*}
  \begin{split}
&
\lrabs{\int_{T\times U}F(x,y)
\dd\mu(x)\dd\mu(y)}
=
\lrabs{\int_{\KxK}F(x,y)\etta_T(x)\etta_U(y)
\dd\mu(x)\dd\mu(y)}
\\&\qquad
\le \lrabs{\int_{\KxK}F(x,y)g(x)h(y)
\dd\mu(x)\dd\mu(y)}
+2M\mu(K)\eps
\\&\qquad
\le\cnc{F}	+2M\mu(K)\eps.
  \end{split}
\end{equation*}
Taking the supremum over all $T$ and $U$ and letting $\eps\to0$, we obtain
$\cn F\le \cnc F$, which completes the proof for bounded $F$.

For a general $F$, consider the truncations
\begin{equation}
  F_M(x,y):=F(x,y)\ett{|F(x,y)|\le M}.
\end{equation} 
By the first part of the proof, $\cn {F_M}=\cnc{F_M}$, and thus
\begin{equation*}
  \begin{split}
	\cn F 
&\le \cn{F_M} + \cn{F-F_M}
= \cnc{F_M} + \cn{F-F_M}
\\&
\le
\cnc{F}+\cnc{F-F_M} + \cn{F-F_M}
\le
\cnc{F}+2\norml{F-F_M}.
  \end{split}
\end{equation*}
Furthermore, as $M\to\infty$,
$\norml{F-F_M}\to0$. Consequently, $\cn F\le\cnc F$, which
completes the proof.
\end{proof}

\section{Equivalence}\label{Seq}

We first extend a result by \citet[Lemma 2.6]{BR09} to the present setting
of \gsf{} measure spaces.

\begin{theorem}\label{T1}
Let $(W_1,S_1)$ and $(W_2,S_2)$ be graphons
where $S_i=(S_i,\cF_i,\mu_i)$
are  $\gs$-finite Borel spaces, $i=1,2$.
Then there exist trivial extensions $(\tW_i,\tS_i)$ of\/ $(W_i,S_i)$ and a
coupling $(\gf_1,\gf_2)$ of $\tS_1$ and $\tS_2$ 
such that 
\begin{equation}\label{t1}
\dcut(W_1,W_2)=
 \dcut(\tW_1,\tW_2) =\cn{\tW_1\qphi1-\tW_2\qphi2}.  
\end{equation}
The coupling may be assumed to be special.
\end{theorem}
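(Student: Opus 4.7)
My strategy is to adapt the Borgs--Chayes compactness argument \cite[Lemma 2.6]{BR09} from probability spaces to the $\sigma$-finite Borel setting, extracting a vague subsequential limit of a minimising sequence of coupling measures and, where necessary, absorbing any escaping mass into suitable trivial extensions. First I take trivial extensions at the outset so that $\mu_1(S_1)=\mu_2(S_2)$ (possibly $=\infty$), and use the Borel assumption to embed each $S_i$ as a Borel subset of a compact metric space $K_i$, with $\mu_i$ viewed as a Borel measure on $K_i$ concentrated on $S_i$. I then pick a minimising sequence of special couplings with coupling measures $\nu_n$ on $S_1\times S_2\subseteq K_1\times K_2$ satisfying
\begin{equation*}
\cnx{S_1\times S_2,\nu_n}{W_1\qpi1-W_2\qpi2}\to\dcut(W_1,W_2).
\end{equation*}

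The key step is to extract a vague subsequential limit $\nu_\infty$ of the $\nu_n$. When the total mass is finite, the $\nu_n$ are finite measures with fixed marginals on the compact Polish space $K_1\times K_2$, and Prokhorov's theorem yields a weak limit $\nu_\infty$ whose marginals are again $\mu_i$, essentially as in the Borgs--Chayes proof. In the $\sigma$-finite infinite-mass case I exhaust each $S_i$ by sets $A_i^{(k)}\uparrow S_i$ of finite measure, take a vague limit of $\nu_n|_{A_1^{(k)}\times A_2^{(k)}}$ for each $k$, and diagonalise to obtain a Radon limit $\nu_\infty$ on $S_1\times S_2$; its marginals $\mu_i'$ satisfy $\mu_i'\le\mu_i$, the deficits $\mu_i-\mu_i'$ reflecting mass escaping to infinity. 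To absorb the deficits I enlarge $S_i$ to a trivial extension $\tS_i=S_i\sqcup R_i$, where $R_i$ is a copy of the $\sigma$-finite measure space $(S_i,\mu_i-\mu_i')$ and $\tW_i=0$ on $R_i$, and extend $\nu_\infty$ to a coupling $\tilde\nu$ of $\tmu_1$ and $\tmu_2$ on $\tS_1\times\tS_2$ by placing compatible mass in $S_1\times R_2$, $R_1\times S_2$ and $R_1\times R_2$. This compactification, which can also be realised by attaching a single point at infinity carrying an infinite atom on each $S_i$, is the non-$\sigma$-finite auxiliary construction alluded to in the remark preceding the theorem.

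Finally I verify $\cn{\tW_1\qphi1-\tW_2\qphi2}=\dcut(W_1,W_2)$. By \refL{Lccut}, the cut norm on the compact space $K_1\times K_2$ equals a supremum of integrals of $(\tW_1\qphi1-\tW_2\qphi2)(g\otimes h)$ for continuous $g,h$ with values in $\oi$; \refL{Lvague} together with a truncation of $W_i$ to bounded functions compactly supported in $A_i^{(k)}\times A_i^{(k)}$ shows that each such integral converges along the subsequence, with the truncation error controlled uniformly in $n$ by the $L^1$-integrability of $W_i$ and the fixed marginals $\mu_i$ of the $\nu_n$. Passing to the supremum yields $\cn{\tW_1\qphi1-\tW_2\qphi2}\le\dcut(W_1,W_2)$, and the reverse inequality is automatic.

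The main obstacle is this uniform truncation-and-tail control, combined with the bookkeeping for the extension regions: I must arrange the placement of the ``missing'' mass across $S_1\times R_2$, $R_1\times S_2$ and $R_1\times R_2$ so that the cut norm of $\tilde\nu$ on the extension pieces does not exceed $\dcut(W_1,W_2)$. This is delicate because, for instance, on $S_1\times R_2$ one has $\tW_1\qphi1=W_1$ (generally nonzero) while $\tW_2\qphi2=0$, so these cross terms give genuine contributions to the cut norm that must be controlled by a careful choice of the extension coupling.
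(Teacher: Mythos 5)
Your overall architecture (minimising sequence of special couplings, a vague subsequential limit, trivial extensions to absorb escaping mass, then \refL{Lvague} and \refL{Lccut} to pass the cut-norm bound to the limit) matches the paper's, but there is a genuine gap at exactly the point you flag as ``the main obstacle'': your construction does not determine how the deficit mass is to be coupled into the extension regions, and without that the cut-norm bound on $\tS_1\times\tS_2$ cannot be established. Taking the vague limit of $\nu_n$ restricted to $A_1^{(k)}\times A_2^{(k)}$ and diagonalising only produces the interior measure $\nu_\infty$ and the two marginal deficits; it discards the joint information about \emph{where} in $S_1$ the mass sits whose partners escape $S_2$, together with any control of the cross terms $\int_{T\times U}\tW_1\qpi1\dd\tilde\nu^2$ over rectangles meeting $S_1\times R_2$. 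These cross terms are genuinely positive in general and are not ``automatic'' from the interior limit: the approximating bounds $\cnx{\nu_n}{W_1\qpi1-W_2\qpi2}<\gd+1/n$ do contain them, but your limit procedure loses them before you get to the extension step. (There is also a small indexing slip: $R_2$ must carry a copy of the deficit of $\mu_1$, not of $\mu_2$, since it is the escaping $\mu_1$-mass that needs partners in $\tS_2$.)

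The paper's resolution is structural rather than a patch: after reducing to $S_1=S_2=\bbRp$ it takes the vague limit not on the interior but on the partial compactification $\ZZ=\ooox^2\setminus\set{(\infty,\infty)}$, on which the coupling measures are Radon precisely because of the marginal constraint ($\hmu_n(K_N)\le 2N$ for $K_N=([0,N]\times\ooox)\cup(\ooox\times[0,N])$). The limit measure $\nu$ then lives on $\ooox^2$ and records the escaping mass as measures on the boundary lines $\bbRp\times\set\infty$ and $\set\infty\times\bbRp$; the cut norm is estimated over all of $\ooox^2$ \emph{including} these lines, via continuous test functions supported on $K_N\times K_N$ (\refL{Lccut}) and vague convergence of products (\refL{Lvague}), so the bound $\le\gd$ is inherited cross terms and all. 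The only remaining defect --- that the marginals of $\nu$ may have an infinite atom at $\infty$, so are not \gsf{} --- is cured by re-embedding the boundary lines diagonally into $\bbRp\times\bbRp'$ and $\bbRp'\times\bbRp$ for a disjoint copy $\bbRp'$, which is what produces the trivial extensions. I recommend you reorganise your limit step along these lines; the paper also needs a preliminary reduction to atomless spaces (crossing with $\oi$) before it can transfer everything to $\bbRp$ with Lebesgue measure, which your embedding into compact metric spaces sidesteps but which you would need in order to invoke \refP{P4.3c}-type normalisations.
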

In other words, the infimum in \eqref{dcut1} is attained for $\tW_1$ and
$\tW_2$. We shall see in \refE{Enot} that in general it is necessary to take
trivial extensions $\tW_1$ and $\tW_2$, even if $\mu_1(S_1)=\mu_2(S_2)$,
unlike the corresponding result
for graphons on probability spaces in \cite{BR09} (see also
\cite[Theorem 6.16]{SJ249}).
Note also that the result is not true for arbitrary measure spaces, not even
in the standard probability space case, see \cite{BCL} for a counter example.

\begin{proof}
First, we note that we may assume that $S_1$ and $S_2$ are atomless.
In general, we let $\hS_i:=S_i\times\oi$ and $\hW_i:=W_i\qpi{i}$, the
pull-back to $\hS_i$. 
The proof below applies to $(\hW_i,\hS_i)$ and shows that there exist trivial
extensions $\chW_i$ to $\chS_i:=\hS_i\cup E_i$
and a coupling $(\gf_1,\gf_2)$ of $\chS_1$ and $\chS_2$ such that
$
\cn{\chW_1\qphi1-\chW_2\qphi2}
=
 \dcut(\hW_1,\hW_2) =
\dcut(W_1,W_2)
$.
Here $\gf_i:S\to\hS_i$ for some measure space $(S,\mu)$,
and $E_i=(E_i,\nu_i)$ are some measure spaces with $E_i$
disjoint from $\hS_i$.
We may assume that $E_i$ also is disjoint from $S_i$ and define $\tS_i:=
S_i\cup E_i$. Let $\tW_i$ be the trivial extension of $W_i$ to $\tS_i$.
Define $\psi_i:\chS_i\to\tS_i$ by $\psi_i=\pi_i$ on $S_i\times\oi$, and
$\psi_i$ the identity on $E_i$.
Then $\psi_i$ is \mpp{}, and $\chW_i=(\tW_i)\qpsi{i}$.
Hence, the pair of mappings $\gf_i^*:=\psi_i\circ\gf_i:S\to\tS_i$ 
give the desired
coupling. 

We may thus assume that $S_1$ and $S_2$ are atomless Borel spaces.
In this case,
by \cite[Lemma 3.1]{BCCH16}, there exists a \mpb{} of $S_i$ onto 
$\bigpar{[0,\mu_i(S_i)),\cB,\leb}$. Hence we may without loss of generality
assume that the measure space
$S_i$ is $\bigpar{[0,m_i),\cB,\leb}$ for some $m_i\in\ooox$.
Moreover, if $m_i<\infty$, we may make a trivial extension of $W_i$ to
$[0,\infty)$. 
Hence we may, and shall, assume that $S_1=S_2=\bbR_+$.

Let $\gd:=\dcut(W_1,W_2)$. 
By the definition \eqref{dcut1}, there exists for every $n$ a coupling measure
$\hmu_n$ on $\bbRp^2$ with both marginals equal to $\leb$ such that 
\begin{equation}\label{ophelia}
  \cnx{\hmu_n}{W_1^{\pi_1}-W_2^{\pi_2}}
<\gd+\xfrac{1}n.
\end{equation}

Let $\ZZ:=\ooox\times\ooox\setminus\set{(\infty,\infty)}$.
Then, $\ZZ$ is an open subset of the compact metric space $\ooox^2$, so
$\ZZ$ is a locally compact second countable space.
Moreover, $\ZZ$ is a Polish space \cite[Proposition 8.1.1]{Cohn}
and $\bbRp^2\subset\ZZ\subset\ooox^2$.
The measure $\hmu_n$ is defined on $\bbRp^2$, and we can regard it as a
measure on $\ZZ$.

Let, for $N\in\bbN$,  $K_N:=([0,N]\times\ooox) \cup (\ooox\times[0,N])$.
Then each $K_N$ is a compact subset of $\ZZ$ and
$\ZZ=\bigcup_{N\ge1}K_N$; 
moreover, every compact
subset of $\ZZ$ is a subset of some $K_N$.

For each $n$, since $\hmu_n$ has marginals $\leb$,
\begin{equation}\label{hamlet}
  \begin{split}
  \hmu_n(K_N)
&\le \hmu_n([0,N]\times\ooox)+\hmu_n (\ooox\times[0,N])
\\&
=\leb([0,N])+\leb([0,N])
=2N.	
  \end{split}
\end{equation}
Hence, $\hmu_n(K)<\infty$ for every compact $K\subset\ZZ$, so $\hmu_n$ is a
Radon measure on $\ZZ$.
Moreover, \eqref{hamlet} implies that
the sequence $\hmu_n(K)$ is bounded for each compact $K\subset\ZZ$,
which means that the sequence $\hmu_n$ is relatively compact in the vague
topology, see \cite[Theorem A2.3(ii)]{Kallenberg}.
Furthermore, 
by \cite[Theorem A2.3(i)]{Kallenberg}, 
the set of Radon measures on $\ZZ$ with the vague  topology is
metrizable. (In fact, a Polish space.) 
Consequently, there exists a 
subsequence $(\hmux_n)$
of $(\hmu_n)$ that converges vaguely to some Radon measure $\nu$ on $\ZZ$, \ie,
\begin{equation}
  \label{nu}
\lim_\ntoo
\int_{\ZZ} f\dd \hmux_n
= \int_{\ZZ} f\dd \nu, 
\qquad f\in C_c(\ZZ).
\end{equation}
Since $\ZZ\subset\ooox^2$, we may also regard $\nu$ as a measure on
$\ooox^2$. 
Since $\ZZ$ is $\gs$-compact,
$\nu$ is \gsf{} on $\ZZ$ and thus on $\ooox^2$.
(But note that $\nu$ is an infinite measure and thus not a Radon measure on
the compact space $\ooox^2$.)

We next consider the marginals $\nu\qpi{i}$ of $\nu$; these are measures on
$\ooox$. 
If $g\in C_c(\bbRp)$, then the function $g\qpi1(x,y)=g(x)$ has compact
support $\supp(g)\times\ooox$ in $\ZZ$, so $g\qpi1\in C_c(\ZZ)$ and
\eqref{nu} yields, recalling that each $\hmux_n$ has marginals 
$(\hmux_n)\qpi1=\leb$,
\begin{equation}
\int_{\bbRp}g\dd\nu\qpi1=
\int_{\ZZ} g\qpi1\dd \nu
=
\lim_\ntoo \int_{\ZZ} g\qpi1\dd \hmux_n
=
\lim_\ntoo \int_{\bbRp} g\dd (\hmux_n)\qpi1
=\int_{\bbRp} g\dd\leb.
\end{equation}
Consequently, the marginal $\nu\qpi1$ of $\nu$ equals $\leb$ on $\bbRp=\ooo$.
By symmetry, the same holds for $\nu\qpi2$.
However, note that each marginal also may have a point mass at \set\infty;
this point mass may even be infinite, in which case the marginal is not \gsf.
(We shall see that this causes no serious problem.)

Let $\eps>0$.
Since 
$W_i\in L^1(\bbRp^2)$ and
$C_c(\bbRp^2)$ is dense in $L^1(\bbRp^2)$, there exist $\bW_i\in
C_c(\bbRp^2)$ such that $\norm{W_i-\bW_i}_{L^1(\bbRp^2)}<\eps$, $i=1,2$.
Then
\begin{equation}\label{horatio}
  \cnx{\hmux_n}{W_i\qpi{i}-\bW_i\qpi i}
\le
  \norm{W_i\qpi{i}-\bW_i\qpi i}_{L^1(\hmux_n^2)}
=
  \norm{W_i-\bW_i}_{L^1(\leb^2)}<\eps,
\end{equation}
and thus by \eqref{ophelia} and the triangle inequality, recalling that
$\hmux_n=\hmu_{k_n}$ for some $k_n\ge n$,
\begin{equation}\label{laertes}
  \cnx{\hmux_n}{\bW_1^{\pi_1}-\bW_2^{\pi_2}}
<\gd+\xfrac1n+2\eps.
\end{equation}

Extend each $W_i$ and $\bW_i$  trivially (\ie, by 0) to $\ooox^2$.

Let $N$ be such that $\supp(\bW_i)\subseteq [0,N]\times[0,N]$ for $i=1,2$.
Then $\supp(\bW_i\qpi i)\subseteq K_N\times K_N$, and thus
$\bW_i\qpi i\in C_c(\ZZ^2)$.

Let $f,g\in\CI(K_N)$. We can extend $f$ and $g$ to functions in $\CcI(\ZZ)$;
moreover, there exist sequences $f_m$ and $g_m$ in $\CcI(\ZZ)$ such that
\begin{equation}\label{rosencrantz}
  f_m(x_1,x_2)\to
  \begin{cases}
	f(x_1,x_2), & (x_1,x_2)\in K_N,\\
0,&\text{otherwise,}
  \end{cases}
\end{equation}
as \mtoo, and similarly for $g_m$.
Then, by \eqref{laertes} and \eqref{yorick}, for any $m$ and $n$,
\begin{equation}\label{polonius}
  \begin{split}
&  \lrabs{\int_{\ZZ^2}\bigpar{\bW_1(x_1,y_1)-\bW_2(x_2,y_2)}f_m(x_1,x_2)g_m(y_1,y_2)
\dd\hmux_n(x_1,x_2)\dd\hmux_n(y_1,y_2)}
\\&
\qquad\le \gd+2\eps+\xfrac1n.	
  \end{split}
\raisetag\baselineskip
\end{equation}
The integrand in \eqref{polonius} is a continuous function with compact
support in $\ZZ^2$, and by \refL{Lvague},
$\hmux_n\times\hmux_n\vto\nu\times\nu$.  
Hence, we can take the limit as \ntoo{} in \eqref{polonius} and obtain,
with $z_i=(x_i,y_i)$,
\begin{equation}\label{gertrude}
  \begin{split}
& \lrabs{\int_{\ZZ^2}\bigpar{\bW_1\qpi1(z_1,z_2)-\bW_2\qpi2(z_1,z_2)}
  f_m(z_1)g_m(z_2)
\dd\nu(z_1)\dd\nu(z_2)}
\le \gd+2\eps.
  \end{split}
\end{equation}
Now let \mtoo; by \eqref{rosencrantz} and dominated convergence (noting that
$\bW_1\qpi1-\bW_2\qpi2\in L^1(\qx\nu)$),
the integral in \eqref{gertrude} 
converges to
\begin{equation}
\int_{\qx{K_N}}\bigpar{\bW_1\qpi1(z_1,z_2)-\bW_2\qpi2(z_1,z_2)}
  f(z_1)g(z_2)
\dd\nu(z_1)\dd\nu(z_2).
\end{equation}
Since  $f,g\in\CI(K_N)$ are arbitrary, \eqref{gertrude} and \refL{Lccut} 
thus yield
\begin{equation}
  \cnx{K_N,\nu}{\bW_1\qpi 1-\bW_2\qpi2}\le \gd+2\eps.
\end{equation}
Furthermore, $\bW_1\qpi1$ and $\bW_2\qpi2$ vanish off $\qx{K_N}$, and thus
\begin{equation}\label{fortinbras}
  \cnx{\ooox^2,\nu}{\bW_1\qpi 1-\bW_2\qpi2}
=  \cnx{K_N,\nu}{\bW_1\qpi 1-\bW_2\qpi2}\le \gd+2\eps.
\end{equation}
Consequently, 
on $\ooox^2$,
using the analogue of \eqref{horatio} for $\nu$,
\begin{equation}
  \begin{split}
  \cnx{\nu}{W_1\qpi 1-W_2\qpi2}
&
\le  \cnx{\nu}{\bW_1\qpi 1-\bW_2\qpi2}
+  \cnx{\nu}{W_1\qpi1-\bW_1\qpi 1}
+  \cnx{\nu}{W_2\qpi2-\bW_2\qpi 2}
\\&
\le \gd+4\eps.	
  \end{split}
\raisetag\baselineskip
\end{equation}
Since $\eps>0$ is arbitrary, this yields
\begin{equation}\label{guildenstern}
  \cnx{\ooox^2,\nu}{W_1\qpi 1-W_2\qpi2}\le \gd
=\dcut(W_1,W_2).
\end{equation}
If $\nu(\set\infty\times\ooo)$ and $\nu(\ooo\times\set\infty)$ are finite,
then the projections $\nu\qpi1$ and $\nu\qpi2$ are \gsf{} measures on
$\ooox$.
In this case, \eqref{guildenstern} shows that if we take the trivial
extensions of $W_1$ and $W_2$ to $\ooox^2$, then 
$\nu$ is a coupling measure such that the special coupling $(\pi_1,\pi_2)$
yields equality in \eqref{t1}. 
(Recall that 
$\dcut(W_1,W_2)\le  \cnx{\nu}{W_1\qpi 1-W_2\qpi2}$ 
by the definition \eqref{dcut1}.)

In general, the projections are not \gsf, since they may have infinite atoms
at $\infty$,
but we may easily modify the
construction. Let $\tS:=\bbRp\cup\bbRp'$, where $\bbRp$ and $\bbRp'$ are two
disjoint copies of $\ooo$, with variables denoted $x$ and $x'$,
respectively.
Define a map $\gf:\ZZ\to\qx{\tS}$ by
\begin{equation}\label{benedict}
  \gf(x,y)=
  \begin{cases}
(x,y), & x,y<\infty,
\\
(x,x'),& y=\infty,
\\(y',y),&x=\infty.
  \end{cases}
\end{equation}
This means that the lines $\bbRp\times\set\infty$ and
$\set\infty\times\bbRp$ are mapped to the diagonals in $\bbRp\times\bbRp'$
and $\bbRp'\times\bbRp$, respectively.
 Let $\tnu:=\nu\qphi{}$. Then there is no problem with the projections:
 $\tnu:=\nu\qphi{}$ 
is a \gsf{} measure on $\qx{\tS}$, and the marginals 
$\tnu_i:=\tnu\qpi i$ are \gsf{} measures on $\tS$; moreover, $\tnu_i=\leb$ on
$\bbRp\subset \tS$.
Hence, we can define $\tS_i:=(\tS,\tnu_i)$ and let $\tW_i$ be the trivial
extension of $W_i$ to $\tS_i$; then $\tnu$ is a coupling measure.
Moreover,
\begin{equation}\label{bottom}
  \cnx{\qx{\tS},\tnu}{\tW_1\qpi 1-\tW_2\qpi2}
=  \cnx{\ooox^2,\nu}{W_1\qpi 1-W_2\qpi2}.
\end{equation}
One way to see \eqref{bottom} is to define $\psi:\tS\to\ooox$ by $\psi(x)=x$
and $\psi(x')=\infty$, and let $\psiq:=\psi\tensor\psi:\tS^2\to\ooox^2$.
Then it follows from \eqref{benedict} that $\psiq\circ\gf$ is the identity
embedding $Z\to\ooox^2$.
Hence, $\psiq$ is \mpp{} $(\qx{\tS},\tnu)\to(\qx{\ooox},\nu)$.
Furthermore, $\tW_i=W_i^\psi$ and thus
$\tW_i\qpi{i}=(W_i^\psi)\qpi{i}=W_i^{\psi\circ\pi_i}=W_i^{\pi_i\circ\psiq}$,
and thus
\begin{equation*}
  \cnx{\qx\tS,\tnu}{\tW_1\qpi 1-\tW_2\qpi2}
=
  \cnx{\qx\tS,\tnu}{\tW_1^{\pi_1\circ\psiq}-\tW_2^{\pi_2\circ\psiq}}
=  \cnx{\ooox^2,\nu}{W_1\qpi 1-W_2\qpi2},
\end{equation*}
showing \eqref{bottom}.

Finally, \eqref{bottom} and  \eqref{guildenstern} show that
\begin{equation}
  \cnx{\qx\tS,\tnu}{\tW_1\qpi 1-\tW_2\qpi2}
=  \cnx{\ooox^2,\nu}{W_1\qpi 1-W_2\qpi2}\le 
\dcut(W_1,W_2),
\end{equation}
and thus equality holds by the definition \eqref{dcut1}.
\end{proof}

\begin{remark}\label{RT1dl}
  The analogue of \refT{T1} for $\dl$ holds too. The proof is essentially
  the same, with the difference that we do not need $f,g,f_m,g_m$; we
  proceed directly from the $L^1$ version of \eqref{laertes} to the $L^1$
  version of \eqref{fortinbras} using \refL{Lvague} and the fact that
  $|\bW_1\qpi1-\bW_2\qpi2|\in C_c(\ZZ^2)$.

Similarly, \refT{T1} holds for $\dlp$ too, for any $p>1$, provided
$W_1,W_2\in L^p$ and $W_1,W_2\ge0$.
\end{remark}

As a special case, we obtain the following characterisation of equivalent
graphons on  Borel spaces, for example $\bbRp$;
again this extends a result by \citet[Corollary 2.7]{BR09}.
\begin{theorem}\label{T=0}
Let $(W_1,S_1)$ and $(W_2,S_2)$ be graphons
with $\dcut(W_1,W_2)=0$, and assume that $S_i=(S_i,\cF_i,\mu_i)$
are  $\gs$-finite Borel spaces, $i=1,2$.
Then there exist trivial extensions $(\tW_i,\tS_i)$ of\/ $(W_i,S_i)$ and a
coupling $(\gf_1,\gf_2)$ of $\tS_1$ and $\tS_2$ 
such that $\tW_1^{\gf_1} = \tW_2^{\gf_2}$ a.e.
The coupling may be assumed to be special.
\end{theorem}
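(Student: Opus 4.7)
The plan is to derive this as an almost immediate corollary of \refT{T1} combined with the characterisation \eqref{cawdor} of the cut norm vanishing.

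First I would invoke \refT{T1} applied to $W_1$ and $W_2$: since both underlying measure spaces are \gsf{} Borel, the theorem yields trivial extensions $(\tW_i,\tS_i)$ and a coupling $(\gf_1,\gf_2)$ (which may be taken special) such that
\begin{equation*}
\cn{\tW_1\qphi1 - \tW_2\qphi2} = \dcut(W_1,W_2).
\end{equation*}
Under the hypothesis $\dcut(W_1,W_2) = 0$, the right-hand side is zero, so $\cn{\tW_1\qphi1 - \tW_2\qphi2} = 0$.

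Next I would check that the difference $\tW_1\qphi1 - \tW_2\qphi2$ lies in $L^1$ of the coupling space, which is required before applying \eqref{cawdor}. This is immediate: if $\mu$ denotes the measure on the common domain $S$ of the coupling, then since each $\gf_i$ is \mpp, the push-forward identity gives
\begin{equation*}
\int_{S^2} |\tW_i\qphi{i}| \dd(\mu\times\mu) = \int_{\tS_i^2} |\tW_i| \dd(\tmu_i\times\tmu_i) < \infty,
\end{equation*}
since trivial extensions preserve integrability ($\tW_i$ vanishes outside $S_i\times S_i$, where it equals $W_i\in L^1$). Hence the difference is in $L^1(S\times S, \mu\times\mu)$.

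Now the characterisation \eqref{cawdor} applies and gives $\tW_1\qphi1 - \tW_2\qphi2 = 0$ almost everywhere with respect to $\mu\times\mu$, which is exactly $\tW_1\qphi1 = \tW_2\qphi2$ \aex. The claim that the coupling may be taken special is inherited from the corresponding statement in \refT{T1}. There is no real obstacle here: once \refT{T1} is in hand, the argument is essentially one line, and the only minor care needed is verifying the integrability required for \eqref{cawdor}.
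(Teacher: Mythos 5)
Your proposal is correct and follows exactly the paper's route: the paper proves this result as "an immediate consequence of Theorem \ref{T1} and \eqref{cawdor}", which is precisely your argument. The extra verification that $\tW_1\qphi1-\tW_2\qphi2\in L^1$ of the coupling space is a harmless (and correct) bit of added care that the paper leaves implicit.
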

\begin{proof}
  An immediate consequence of \refT{T1} and \eqref{cawdor}.
\end{proof}

\begin{remark}
  In the standard case of graphons on probability spaces, \citet{BCL} proved
  a similar result using pull-backs in the opposite direction, \viz{}
  that
$W_1\cong W_2$ if and only if there exists a graphon $W$ on some probability
  space $S$ and mesure preserving maps $\gf_i:S_i\to S$ such that
  $W_i=W\qphi{i}$ a.e.
See also \cite[Theorem 8.4]{SJ249} with an alternative proof.
(For this result, no assumption on the probability spaces $S_1$ and $S_2$ is
  needed, so it yields a general characterisation of equivalence.)

We conjecture that this result too extends (in some form) to the \gsf{}
case, but we leave this as an open problem.
\end{remark}

We can elaborate \refT{T=0} as follows, \cf{} the corresponding result
\cite[Theorem 8.3]{SJ249} for graphons on probability spaces.

\begin{definition}\label{Dee}
  Two graphons $(W_1,S_1)$ and $(W_2,S_2)$ are \emph{\eeq}
if one of the following conditions holds:
\begin{romenumerate}
\item \label{Deepb}
$W_1=W_2\qgf{}$ \aex{} for some \mpm{} $\gf:S_1\to S_2$, or conversely.
\item \label{Deetr}
$W_1$ is a trivial extension of $W_2$, or conversely.
\end{romenumerate}
\end{definition}

Note that \ref{Deepb} includes the case of two \aex{} equal graphons on the
same measure space.

\begin{theorem}\label{T=}
  Let $W$ and $W'$ be two graphons. Then the following are equivalent:
  \begin{romenumerate}
  \item \label{T=equ}
$W\equ W'$.
  \item \label{T=d}
$\dcut(W,W')=0$.
  \item \label{T=1}
$\dl(W,W')=0$.
  \item \label{T=e}
There exists a finite sequence of graphons $W=W_0,\dots,W_N=W'$ such that
$W_{i-1}$ and $W_i$ are \eeq{} for $i=1,\dots,N$.
  \end{romenumerate}
\end{theorem}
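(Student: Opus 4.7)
The plan is to establish a cycle of implications. The equivalence \ref{T=equ} $\iff$ \ref{T=d} is the definition of $\equ$ from \refSS{SSdcut}, and \ref{T=1} $\Rightarrow$ \ref{T=d} is immediate from \eqref{dcut<dl}.

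For \ref{T=e} $\Rightarrow$ \ref{T=1}, I would first show that any two elementarily equivalent graphons have $\dl=0$: in case \ref{Deepb} of \refD{Dee}, the coupling $(\text{id}_{S_1},\gf)$ defined on $S_1$ gives $\norml{W_1-W_2\qgf}=0$, and in case \ref{Deetr}, one of $W_1,W_2$ already serves as a common trivial extension of the other to the same space, so the identity coupling yields $\norml{\cdot}=0$. The triangle inequality for $\dl$ (established in \cite{BCCH16}) then propagates this along the chain to $\dl(W,W')=0$.

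The main work lies in \ref{T=d} $\Rightarrow$ \ref{T=e}, which I would handle by a Borel reduction followed by an application of \refT{T=0}. Reading off the construction behind \refP{P2.8} (pull back along the projection $\pi_1:S\times\oi\to S$ to produce an atomless graphon, then transport it via a measure-preserving isomorphism to $\bbRp$), both steps are pullbacks along \mpm{s} and thus elementary equivalences in the sense of \refD{Dee}\ref{Deepb}. Applied to both $W$ and $W'$ this produces graphons $V, V'$ on $\bbRp$ together with finite elementary-equivalence chains $W\to V$ and $W'\to V'$. By the already-proven \ref{T=e} $\Rightarrow$ \ref{T=d}, $\dcut(V,V')=\dcut(W,W')=0$, and since $\bbRp$ is a \gsf{} Borel space, \refT{T=0} supplies trivial extensions $\tV,\tV'$ and a special coupling $(\gf_1,\gf_2)$ with $\tV\qphi{1}=\tV'\qphi{2}$ a.e. Inserting the five-link segment
\begin{equation*}
V \;\to\; \tV \;\to\; \tV\qphi{1} \;=\; \tV'\qphi{2} \;\to\; \tV' \;\to\; V',
\end{equation*}
whose arrows are, in order, trivial extension, pullback, identity pullback from the a.e.\ equality, converse pullback, and converse trivial extension, between the two reduction chains then yields the required finite chain of elementary equivalences from $W$ to $W'$.

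The principal obstacle is to verify that the reduction behind \refP{P2.8} is genuinely realizable by a finite chain of elementary equivalences even when the underlying measure space is not a priori Borel; this comes down to showing that any atomless \gsf{} space (in particular $S\times\oi$) admits a \mpm{} to $\bbRp$ qualifying as a pullback in the sense of \refD{Dee}\ref{Deepb}, which rests on the measure-algebra classification of atomless \gsf{} spaces used in \cite{BCCH16} and may require an additional converse-pullback step before the $\otimes\oi$ reduction.
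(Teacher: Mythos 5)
Your overall architecture coincides with the paper's: \ref{T=equ}$\iff$\ref{T=d} by definition, \ref{T=1}$\implies$\ref{T=d} from \eqref{dcut<dl}, \ref{T=e}$\implies$\ref{T=1} by checking both cases of \refD{Dee} and using the triangle inequality, and \ref{T=d}$\implies$\ref{T=e} by a Borel reduction followed by \refT{T=0}. Three of the four implications are exactly as in the paper.

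The gap is in the Borel reduction for \ref{T=d}$\implies$\ref{T=e}, and it is precisely the obstacle you flag at the end but do not close. The construction you read off from \refP{P2.8} --- pull back along $\pi_1:S\times\oi\to S$ and then ``transport via a \mpp{} isomorphism to $\bbRp$'' --- requires a point isomorphism (or at least a \mpm{} in a usable direction) between the atomless space $S\times\oi$ and an interval. The isomorphism invoked in the paper for this purpose (\cite[Lemma 3.1]{BCCH16}, used in the proof of \refT{T1}) is only available for \emph{Borel} spaces; for a general \gsf{} measure space no such point map need exist, so this link of your chain is not an instance of \refD{Dee}\ref{Deepb} and the chain breaks. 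The paper sidesteps this entirely by invoking \cite[Lemma 4.1]{BCCH16}, which writes $W=W_1^{\gf_1}$ a.e.\ for a graphon $W_1$ on a Borel space $S_1$ and a \mpm{} $\gf_1:S\to S_1$; the crucial point is the direction of the map: $\gf_1$ goes \emph{into} the Borel space and $W$ is recovered as a pullback, which is a single legitimate elementary equivalence regardless of how pathological $S$ is. (This is exactly the ``additional converse-pullback step'' you suspect is needed.) Having reduced $W$ and $W'$ to graphons $W_1,W_2$ on Borel spaces in one step each, the paper applies \refT{T=0} to $W_1,W_2$ directly --- no further reduction to $\bbRp$ is needed, since \refT{T=0} is stated for arbitrary \gsf{} Borel spaces --- and splices in the chain $W,\,W_1,\,\tW_1,\,\tW_1\qpsi{1}=\tW_2\qpsi{2},\,\tW_2,\,W_2,\,W'$, which is the same five-link segment you describe. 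So your proof becomes correct, and essentially identical to the paper's, once the appeal to the construction behind \refP{P2.8} is replaced by an appeal to \cite[Lemma 4.1]{BCCH16}.
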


The equivalence \ref{T=d}$\iff$\ref{T=1} is \cite[Proposition 2.6]{BCCH16}.

\begin{proof}
  \ref{T=equ}$\iff$\ref{T=d} holds by definition.

  \ref{T=d}$\implies$\ref{T=e}: 
By \cite[Lemma 4.1]{BCCH16}, $W=W_1^{\gf_1}$ and $W'=W_2^{\gf_2}$ for some
graphons $W_1$ and $W_2$ on Borel measure spaces $S_1$ and $S_2$.
Then $\dcut(W_1,W_2)=\dcut(W,W')=0$.
Hence \refT{T=0} shows the existence of trivial extensions $\tW_i$ of
$W_i$ and a coupling $(\psi_1,\psi_2)$ such that
$\tW_1\qpsi1=\tW_2\qpsi2$ a.e.
Consequently,
$W, W_1,  \tW_1, \tW_1\qpsi1,
\tW_2\qpsi2, \tW_2, W_2, W'$
is a sequence where every graphon is \eeq{} to the next.

  \ref{T=e}$\implies$\ref{T=1}: It is clear by the definitions in
  \refS{Sdef} that $\dl(W,W')=0$ whenever $W$ and $W'$ satisfy one of the
	two cases in \refD{Dee}. The result follows by the triangle inequality.

  \ref{T=1}$\implies$\ref{T=d}: Immediate by \eqref{dcut<dl}.
\end{proof}

\begin{theorem}
  \label{T=p}
Let $p>1$ and let $W$ and $W'$ be two non-negative graphons in $L^p$.
Then the conditions \ref{T=equ}--\ref{T=e} in \refT{T=} are also equivalent to
\begin{romenumerateq}
\item \label{T=dp}
$\dlp(W,W')=0$.
\end{romenumerateq}
\end{theorem}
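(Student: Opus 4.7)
The plan is to prove the equivalence (iv) $\iff$ (v) and combine with \refT{T=} to obtain the full chain of equivalences. The approach mirrors the proof of \refT{T=}, with \refT{T1} replaced by the $\dlp$-analogue pointed out in \refR{RT1dl}.

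For (iv) $\implies$ (v), by the triangle inequality for the quasi-metric $\dlp$, it suffices to verify $\dlp(W,W')=0$ whenever $W$ and $W'$ are \eeq{} in the sense of \refD{Dee}. In case \ref{Deepb} with $W_1=W_2^{\gf}$ for measure-preserving $\gf:S_1\to S_2$, we have $\mu_1(S_1)=\mu_2(S_2)$, and the coupling $(\mathrm{id}_{S_1},\gf)$ yields directly via \eqref{dp} that $\dlp(W_1,W_2)\le\normlp{W_1-W_2^{\gf}}=0$. In case \ref{Deetr}, $\dlp(W,W')=0$ by the invariance of $\dlp$ under trivial extensions proved in \cite{BCCH16}, which is precisely where the hypothesis \eqref{pcond} (non-negativity and $L^p$) is used.

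For (v) $\implies$ (iv), I follow the strategy of the proof of \refT{T=}. First apply \cite[Lemma 4.1]{BCCH16} to obtain non-negative graphons $W_1,W_2\in L^p$ on \gsf{} Borel spaces together with measure-preserving maps $\gf_i$ such that $W=W_1^{\gf_1}$ and $W'=W_2^{\gf_2}$; non-negativity and the $L^p$ property pass through to $W_i$ after possibly modifying on a null set. Then $\dlp(W_1,W_2)=\dlp(W,W')=0$ by the pull-back invariance just established. The main step is now to invoke \refR{RT1dl}: the $\dlp$-analogue of \refT{T1}, valid under the hypotheses \eqref{pcond}, provides trivial extensions $\tW_1,\tW_2$ and a coupling $(\psi_1,\psi_2)$ attaining the infimum in \eqref{dp}, so $\normlp{\tW_1^{\psi_1}-\tW_2^{\psi_2}}=0$ and hence $\tW_1^{\psi_1}=\tW_2^{\psi_2}$ a.e. Concatenating the elementary equivalences produces the chain
\begin{equation*}
W,\; W_1,\; \tW_1,\; \tW_1^{\psi_1},\; \tW_2^{\psi_2},\; \tW_2,\; W_2,\; W',
\end{equation*}
in which every consecutive pair is \eeq, establishing (iv).

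The only step that is not a routine transcription of the proof of \refT{T=} is the appeal to \refR{RT1dl}, and this is the main obstacle. However, the author has already sketched how the proof of \refT{T1} adapts: one passes directly from the $L^p$-analogue of \eqref{laertes} to the $L^p$-analogue of \eqref{fortinbras} using \refL{Lvague} and the fact that $|\bW_1\qpi1-\bW_2\qpi2|^p\in C_c(\ZZ^2)$ when $\bW_i\in C_c(\bbRp^2)$, so no test functions $f,g$ are required and the argument is in fact shorter than for $\dcut$.
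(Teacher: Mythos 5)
Your proof is correct and follows essentially the same route as the paper: (iv)$\implies$(v) by checking the two cases of elementary equivalence (with the trivial-extension case relying on the invariance from \cite{BCCH16}, where \eqref{pcond} is used), and (v)$\implies$(iv) by transcribing the proof of \ref{T=d}$\implies$\ref{T=e} with the $\dlp$-analogue of \refT{T1} from \refR{RT1dl} in place of \refT{T=0}. The paper additionally records a second, more self-contained argument proving \ref{T=dp}$\implies$\ref{T=1} directly via \Holder's inequality on $I_M\cup\gf\qw(I_M)$, which avoids rechecking the details of \refR{RT1dl}, but your version is a valid proof as it stands.
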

\begin{proof}
  \ref{T=e}$\implies$\ref{T=dp}: Immediate, as the proof of
  \ref{T=e}$\implies$\ref{T=1} above.

  \ref{T=dp}$\implies$\ref{T=e}: As the proof of
  \ref{T=d}$\implies$\ref{T=e} above, now using \refR{RT1dl}.

Alternatively, we can prove   \ref{T=dp}$\implies$\ref{T=1} directly
as follows.
Using \refP{P2.8}, 
we may assume that $W_1$ and $W_2$ are defined on $\bbRp$.

Let $\eps>0$.  Since $\dlp(W,W')=0$, there exists 
by \cite[Proposition 4.3(c) and Remark 4.4]{BCCH16}
a \mpb{}
$\gf:\bbRp\to\bbRp$ such that 
$\normlp{W-(W')\qphi{}}<\eps$.

Also, let $M>0$. Let $I_M:=[0,M]$ and $A:=I_M\cup\gf\qw(I_M)$.
Then, $\leb(A)\le\leb(I_M)+\leb(\gf\qw(I_M))=2M$ and thus, by \Holder's
inequality,
\begin{equation}\label{oberon}
  \begin{split}
\norml{(W-(W')\qgf{})\etta_{\qx A}}
\le \normlp{W-(W')\qgf{}}(\lebb(A\times A))^{1-1/p}
<\eps(2M)^{2-2/p}.
  \end{split}
\end{equation}
Moreover, 
\begin{equation}\label{titania}
  \begin{split}
\norml{W\etta_{(\qx A)^c}}
\le
\norml{W\etta_{(\qx {I_M})^c}}
  \end{split}
\end{equation}
and
\begin{equation}\label{puck}
  \begin{split}
\norml{(W')\qphi{}\etta_{(\qx A)^c}}
&\le
\norml{(W')\qphi{}\etta_{(\qx{\gf\qw(I_M)})^c}}
=
\norml{(W'\etta_{(\qx{I_M})^c})\qphi{}}
\\&
=
\norml{W'\etta_{(\qx{I_M})^c}}.
  \end{split}
\raisetag\baselineskip
\end{equation}
Consequently, by the triangle inequality and \eqref{oberon}--\eqref{puck},
\begin{equation}
  \begin{split}
\gd_1(W,W')
&\le
\norml{W-(W')\qgf{}}
\\&
<\eps(2M)^{2-2/p}
  +
\norml{W\etta_{(\qx{I_M})^c}}	
+\norml{W'\etta_{(\qx{I_M})^c}}.	
  \end{split}
\end{equation}
Letting first $\eps\to0$ and then $M\to\infty$, we obtain $\dl(W,W')=0$.
\end{proof}


\refT{T=} has an important consequence. In order to state it generally,
let a \emph{property} of a graphon be anything that is determined uniquely
by the graphon; \ie, any function $\Phi$ from the set of all
graphons to some arbitrary set $X$.

\begin{theorem}
  \label{TC=}
Let $\Phi$ be a property of graphons such that $\Phi(W_1)=\Phi(W_2)$
whenever $W_1$ and $W_2$ are \eeq{} graphons.
Then  $\Phi(W)=\Phi(W')$ whenever 
$W$ and $W'$ are two equivalent graphons, and consequently, $\Phi$ is
well-defined on the set of equivalence classes of graphons.
\end{theorem}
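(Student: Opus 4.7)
The plan is to invoke \refT{T=} directly; the theorem \ref{TC=} is essentially a reformulation of the equivalence \ref{T=equ}$\iff$\ref{T=e} of that theorem, saying that equivalence in the sense of $\cong$ can always be realized as a finite chain of elementary equivalences. Since $\Phi$ is, by hypothesis, invariant under each single step of this chain, it is invariant under the overall equivalence.

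In more detail, suppose $W \cong W'$. By \refT{T=}, there exists a finite sequence of graphons
\begin{equation*}
W = W_0, W_1, \dots, W_N = W'
\end{equation*}
such that $W_{i-1}$ and $W_i$ are \eeq{} for each $i=1,\dots,N$. By the assumption on $\Phi$, we have $\Phi(W_{i-1}) = \Phi(W_i)$ for each $i$, so iterating yields
\begin{equation*}
\Phi(W) = \Phi(W_0) = \Phi(W_1) = \cdots = \Phi(W_N) = \Phi(W').
\end{equation*}
Hence $\Phi$ takes the same value on any two equivalent graphons, so it factors through the quotient by $\cong$ and is well-defined on the set of equivalence classes (which is indeed a set by \refP{P2.8}, \cf{} \refR{Rset}).

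There is no real obstacle here: all the work has already been done in \refT{T=}, where the nontrivial direction \ref{T=d}$\implies$\ref{T=e} relied on \refT{T=0} (and hence on the attained-infimum result \refT{T1}) to produce a coupling that makes the interpolating graphons in the chain genuinely \eeq{} in the sense of \refD{Dee}. Once that chain is available, the present theorem is a one-line induction on the length $N$ of the chain.
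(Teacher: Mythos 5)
Your proposal is correct and is exactly the paper's argument: the paper's proof is the one-liner ``Obvious by Theorem~\ref{T=}\ref{T=equ}$\implies$\ref{T=e}'', and you have simply written out the chain $\Phi(W_0)=\dots=\Phi(W_N)$ explicitly. Nothing further is needed.
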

\begin{proof}
  Obvious by \refT{T=}\ref{T=equ}$\implies$\ref{T=e}.
\end{proof}

Note that it does not matter whether the property depends continuously on
the graphon or not.

\begin{example}\label{E=}
We give some examples of applications of \refT{TC=}. In all of them, it is
immediately verified that two \eeq{} graphons give the same result.
In the first examples, the property $\Phi$ is  binary  (true or false); 
the last examples are real
parameters. Another example, with a more complicated $\Phi$, follows in the
proof of \refT{T2}.
\begin{romenumerate}
\item \label{E=>0}
If the graphon $W$ is \aex{} non-negative, 
then so is every graphon equivalent to $W$.

\item 
If the graphon $W$ is \aex{}  $\oi$-valued,
then so is every graphon equivalent to $W$.

\item \label{E=Lp}
For any $p>0$, the 
$L^p$-norm $\normlp{W}\in\ooox$ is the same for any two equivalent graphons,
  and is 
  thus well-defined for every equivalence class of graphons.

\item 
Let $h(p):=-p\log p-(1-p)\log(1-p)$ for $p\in\oi$, and define 
  the \emph{entropy} of a $\oi$-valued graphon $W$ by
  \begin{equation}\label{entropy}
\cE(W):=\int_{\qx S} h(W(x,y))\dd\mu(x)\dd\mu(y)\in\ooox 	.
  \end{equation}
Then $\cE(W)$
is the same for any two equivalent $\oi$-valued graphons,
  and is 
  thus well-defined for every equivalence class of such graphons.

For the entropy \eqref{entropy} in the standard case of graphons on
probability 
spaces, see \eg{} \cite{Aldous85}, 
\cite[Appendix D.2]{SJ249}, 
\cite{SJ288} and the further references given there.
We leave it as an open problem whether  results in these references can be
extended to the present
setting, at least under suitable conditions.
\end{romenumerate}
\end{example}

As another application, we obtain a new (simpler) proof of 
\cite[Theorem  2.21]{BCCH16}. (The statement is slightly expanded here.)

\begin{theorem}[\cite{BCCH16}]\label{T2}
  Let $W_1$ and $W_2$ be two graphons. Then the following are equivalent.
  \begin{romenumerate}
  \item\label{T2=}
 $W_1\cong W_2$.
  \item\label{T2tG} 
The random graph processes $(\tG_t(W_1))_{t\ge0}$
and $(\tG_t(W_2))_{t\ge0}$ have the same distribution, up to vertices that
stay isolated for all $t$.
  \item\label{T2G} 
The random graph processes $(G_t(W_1))_{t\ge0}$
and $(G_t(W_2))_{t\ge0}$ have the same distribution.
  \item\label{T2tGt} 
The random graphs $\tG_t(W_1)$
and $\tG_t(W_2)$ have the same distribution, up to isolated vertices, for
every $t>0$. 
  \item\label{T2Gt} 
The random graphs $G_t(W_1)$
and $G_t(W_2)$ have the same distribution, for
every $t>0$. 
  \end{romenumerate}
\end{theorem}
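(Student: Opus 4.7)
The plan is to apply \refT{TC=} to obtain \ref{T2=}$\Rightarrow$\ref{T2tG},\ref{T2G},\ref{T2tGt},\ref{T2Gt} cleanly, and to use the almost-sure convergence \eqref{orsino} of \cite{BCCH16} for the remaining hard direction \ref{T2Gt}$\Rightarrow$\ref{T2=}. For the easy direction I would verify that each of the random-graph conditions \ref{T2tG}--\ref{T2Gt} is a property of a graphon invariant under elementary equivalence in the sense of \refD{Dee}. For a pull-back $W_2=W_1\qgf{}$ along a \mpm{} $\gf:S_1\to S_2$, the construction in \refSS{SSRG} is covariant: pushing the underlying Poisson process on $\ooo\times S_1$ forward by $(\mathrm{id},\gf)$ yields the Poisson process on $\ooo\times S_2$ with intensity $\leb\times\mu_2$, and the edge probabilities transform as $W_1(\gf(x_i),\gf(x_j))=W_2(x_i,x_j)$, so the random graphs and processes agree in distribution as unlabelled objects. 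For a trivial extension $\tW$ of $W$ from $(S,\mu)$ to $(\tS,\tmu)$, the Poisson points falling in $\ooo\times(\tS\setminus S)$ produce vertices isolated at \emph{every} time $t$, since $\tW$ vanishes off $S\times S$; discarding these recovers the process associated with $W$ and conditions \ref{T2tG}--\ref{T2Gt} all hold. \refT{TC=} then extends each condition from elementary equivalence to full graphon equivalence.

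For the equivalences among \ref{T2tG}--\ref{T2Gt}, by the definition in \refSS{SSRG}, $G_t(W)$ is exactly $\tG_t(W)$ with all time-$t$ isolated vertices deleted, so ``$\tG_t(W_1)$ and $\tG_t(W_2)$ have the same distribution up to isolated vertices'' means precisely $G_t(W_1)\eqd G_t(W_2)$, giving \ref{T2tGt}$\Leftrightarrow$\ref{T2Gt}. For \ref{T2tG}$\Rightarrow$\ref{T2G}, observe that every forever-isolated vertex is a fortiori isolated at time $t$, so the ``non-isolated subgraph at time $t$'' map is a well-defined measurable functional on the quotient of $\tG_t$ modulo forever-isolated vertices; therefore the equality in distribution asserted by \ref{T2tG} transports to the processes $(G_t(W_i))_{t\ge0}$. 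Finally \ref{T2G}$\Rightarrow$\ref{T2Gt} is immediate by taking marginals; combining these observations with the easy direction and the hard direction below establishes all five equivalences.

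The hard direction \ref{T2Gt}$\Rightarrow$\ref{T2=} is the main obstacle. I would invoke \eqref{orsino}: $\dcut\sz(\xW_{G_t(W_i)},W_i)\to0$ almost surely as $\ttoo$, for $i=1,2$. Since $\xW_G$ is a deterministic functional of $G$, the hypothesis $G_t(W_1)\eqd G_t(W_2)$ implies $\xW_{G_t(W_1)}\eqd\xW_{G_t(W_2)}$ as random elements of the metric space of equivalence classes under $\dcut\sz$, and hence $\dcut\sz(\xW_{G_t(W_1)},W_2)\eqd\dcut\sz(\xW_{G_t(W_2)},W_2)\to0$ in probability. Combining this with $\dcut\sz(\xW_{G_t(W_1)},W_1)\to0$ a.s.\ and the triangle inequality, the deterministic quantity $\dcut\sz(W_1,W_2)$ must vanish. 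By \eqref{sebastian} this yields $W_2\cong\strp{W_1}{u}$ for some $u>0$. To upgrade this stretching equivalence to $W_1\cong W_2$, I would use an invariant that detects the stretching factor: from \eqref{EE} and the assumed equality of distributions at any single $t>0$, $\norml{W_1}=\norml{W_2}$, and \eqref{edmund} gives $\norml{\strp{W_1}{u}}=u\norml{W_1}$, forcing $u=1$ unless $W_1=0$ a.e.\ (in which case $W_2=0$ a.e.\ as well and the equivalence is trivial). Either way, $W_1\cong W_2$, completing the proof.
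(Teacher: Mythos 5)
Your proposal is correct and follows essentially the same route as the paper: \refT{TC=} handles \ref{T2=}$\implies$\ref{T2tG}, the implications among the graph conditions are treated as (essentially) trivial, and the hard direction \ref{T2Gt}$\implies$\ref{T2=} goes via \eqref{orsino} and \eqref{sebastian} to reduce to $W_2\cong\strp{W_1}u$. The only cosmetic differences are that you replace the paper's explicit coupling $G_t(W_1)=G_t(W_2)$ by equality in distribution of the random distances, and you pin down $u=1$ via the invariance of $\norml{\cdot}$ and \eqref{edmund} rather than via \eqref{maria} and the edge count \eqref{EE}; both arguments amount to the same computation.
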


Note that a trivial extension of a graphon in general adds permanently
isolated vertices 
to $\tG_t$, so the result would not be true without the provisions for them in
\ref{T2tG} and \ref{T2tGt}.

\begin{proof}
  \ref{T2=}$\implies$\ref{T2tG}.
By \refT{TC=} (taking $\Phi(W)$ to be the distribution of the process
$(\tG_t(W))_t$), it suffices to consider the case of two \eeq{} graphons,
which  is obvious for both cases in \refD{Dee}.

  \ref{T2tG}$\implies$\ref{T2G}$\implies$\ref{T2Gt}
and
  \ref{T2tG}$\implies$\ref{T2tGt}$\implies$\ref{T2Gt} are trivial.

  \ref{T2Gt}$\implies$\ref{T2=}.
We can couple the random graphs for each fixed $t$ such that
$G_t(W_1)=G_t(W_2)$ a.s., and thus
$W_{G_t(W_1)}=W_{G_t(W_2)}$ a.s.
Moreover,  \eqref{orsino} shows that
$\dcut\sz\bigpar{W_{G_t(W_i)}, W_i}\to0$ in probability as \ttoo, for
$i=1,2$.
Consequently, 
\begin{equation}
  \dcut\sz(W_1,W_2)=0.
\end{equation}
This implies by \eqref{sebastian} that $W_2\cong\strp {W_1}u$ for some
$u>0$.
Using \eqref{maria} and the assumption \ref{T2Gt}, this in turn implies
\begin{equation}
G_t(W_1)\eqd
  G_t(W_2)
\eqd   G_t(\strp{W_1}u)
\eqd  G_{u\qq t}(W_1).
\end{equation}
However, this equality implies (except in the trivial case $W_1=0$) that
$u=1$, for example by considering the expected number of edges, see \eqref{EE}.
Hence $W_2\cong\strp{W_1}1=W_1$.
\end{proof}

\begin{example}
[trivial extensions are necessary in Theorems \ref{T1} and	\ref{T=0}] 
\label{Enot}
Define $W(x,y):=e^{-x-y}\ett{x\ge0}\ett{y\ge0}$ for real $x,y$.
Let $a\in(0,\infty]$ and let $S_1:=[-a,\infty)$, $S_2:=[0,\infty)$, both with
Lebesgue measure. Define the graphons $(W_1,S_1)$ and $(W_2,S_2)$ by
$W_1=W$ and $W_2=W$. Then $W_1$ is a trivial extension of $W_2$, and
thus $\dcut(W_1,W_2)=0$. Nevertheless, there exists no coupling 
$(\gf_1,\gf_2):S\to(S_1,S_2)$ 
such that $\cn{W_1\qphi1-W_2\qphi2}=0$
and thus, by \eqref{cawdor}, $W_1\qphi1=W_2\qphi2$ a.e. 
In fact, suppose that such
a coupling exists, and let $D_i(x):=\int_{S_i}W_i(x,y)\dd\leb(y)$, $x\in S_i$.
Then, for \aex{} $x\in S$,
since $\mu\qphi i=\leb$,
\begin{equation}  
  \begin{split}
  D_1\qphi1(x)
&
=\int_{S_1}W_1(\gf_1(x),y)\dd\leb(y)
=\int_{S}W_1(\gf_1(x),\gf_1(z))\dd\mu(z)
\\&
=\int_{S}W_1\qphi1(x,z)\dd\mu(z)
=\int_{S}W_2\qphi2(x,z)\dd\mu(z)
=D_2\qphi2(x),
  \end{split}
\end{equation}
where the final equality follows by symmetry.
However, 
\begin{equation}
  \mu\set{x\in S:  D_1\qphi1(x)=0}
= \leb\set{y\in S_1:  D_1(y)=0}=a,
\end{equation}
while, similarly,
$  \mu\set{x\in S:  D_2\qphi2(x)=0}=0$.
This is a contradiction.
\end{example}

\section{Completeness}\label{Scomplete}

\citet[Corollary 2.13]{BCCH16} show that the set of (equivalence classes of)
\oi-valued graphons is complete for the
$\dcut$ metric.
Their proof is based on their characterisation of compactness for (\oi-valued)
graphons, see \refS{Scompact}.
We discuss completeness further in this swection, and give a new, more
direct, proof of 
their result; we also extend the result somewhat. (Whether our proof is
simpler or not is a matter of taste and background.)
We also investigate in more detail why a
restriction such as $\oi$-valued is needed; the set of all graphons is not
complete and we give several examples that illustrate that.

\subsection{Completeness for the cut norm}
We begin by studying convergence in the cut norm $\cn\cdot$
for graphons on a fixed \gsf{}
measure space.

We say that a set $\cA$ of integrable functions (possibly defined on different
measure spaces) is \emph{uniformly integrable} if it
satisfies
\begin{PXenumerate}{UI}
\item\label{wc1} $\sup_{F\in\cA}\int_S|F|\dd\mu<\infty$,
and
\item\label{wc2} 
$\sup_{F\in\cA}\int_{|F|>B}|F|\dd\mu\to0$ as 
$B\to\infty$.
\end{PXenumerate}
This is a standard notion for probability spaces, where furthermore \ref{wc2}
implies \ref{wc1}. (For infinite measure
spaces, the notion is much less used and when used, the
definitions vary between different authors.) 
Note also  that assuming \ref{wc1}, \ref{wc2} is equivalent to
\begin{PQenumerate}{UI2$'$}
\item \label{wc2'}
$\sup_{E\subseteq S:\mu(E)\le\gd}\sup_{F\in\cA}\int_{E}|F|\dd\mu\to0$ as 
$\gd\to0$.
\end{PQenumerate}

We say that a set $\cA$ is \emph{\sui} if it satisfies \ref{wc2}.
(For a finite measure space, \sui{} is thus equivalent to \ui, but in
general it is weaker.)

Recall that a subset of a Banach space is relatively weakly compact 
if it is a subset of a weakly compact set.
Recall also that a set is relatively  weakly compact if and only if it is
sequentially weakly compact, \ie, every sequence in the set has a convergent
subsequence. 
(The Eberlein--{\v S}mulian theorem \cite[Theorem V.6.1]{DS}.)
Moreover, let $(S,\mu)$ be a \gsf{} measure space, and let $S_n$ be an
increasing sequence of subsets of $S$ with finite measures such that
$S=\bigcup_n S_n$.
Then a subset
$\cA$ of $L^1(S,\mu)$
is relatively weakly compact if and only if 
\ref{wc1}--\ref{wc2} hold together with
\begin{PQenumerate}{WC3}
\item\label{wc3} $\sup_{F\in\cA}\int_{S\setminus S_n}|F|\dd\mu\to0$ as \ntoo.
\end{PQenumerate}
This is (one form of)
the Dunford--Pettis theorem; see \cite[Theorem 3.2.6]{DP} and 
\cite[Theorem IV.8.9]{DS} for two slightly different formulations, and note
\cite[Corollary IV.8.10]{DS}.

The Dunford--Pettis theorem is perhaps best known in the case of a
probability space. In that case, and more generally for any finite measure
space $(S,\mu)$, \ref{wc3} is trivial;
thus,
the theorem then says that a subset of $\lsmu$ is relatively weakly compact
if and only it 
is \ui.

We say that a set $\cA$ in a metric space is \emph{relatively complete} if
every Cauchy sequence in $\cA$ converges to some limit (which does not have to
belong to $\cA$). It is easy to see that $\cA$ is relatively complete if and
only if $\overline\cA$ is complete.

We now give our main result on completeness for the cut norm.
Although we only are interested
in symmetric functions, the theorem and its proof hold for general
(integrable) functions on a product $S_1\times S_2$; such functions appear
for example in the study of bipartite graphs.

\begin{theorem}
 \label{TXA}
Let $(S,\mu)$ be a \gsf{} measure space.
Any \ui{} set of  graphons in $L^1(S^2,\mu^2)$ is
relatively   complete for the cut norm.
\end{theorem}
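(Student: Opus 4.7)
The plan is to take an arbitrary Cauchy sequence $(W_n)$ in the \ui{} family $\cA$ and produce a limit $W\in L^1(S\times S,\qxq\mu)$ with $\cn{W_n-W}\to0$, by combining a Dunford--Pettis extraction on a finite-measure exhaustion of $S\times S$ with a uniform cut-norm tail estimate supplied by the Cauchy assumption itself.

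Fix an increasing exhaustion $S=\bigcup_k S_k$ with $\mu(S_k)<\infty$. On the finite-measure space $S_k\times S_k$ the classical Dunford--Pettis theorem applies (condition \ref{wc3} is vacuous there), so the restrictions $\{W_n\etta_{S_k\times S_k}\}_n$ form a relatively weakly compact subset of $L^1(S_k\times S_k)$. A standard diagonal extraction then yields a single subsequence $(W_{n_j})$ such that $W_{n_j}\etta_{S_k\times S_k}$ converges weakly in $L^1(S_k\times S_k)$ to some symmetric $W^{(k)}$ for every $k$. Testing against indicators of measurable subsets of $S_k\times S_k\subseteq S_{k+1}\times S_{k+1}$ shows $W^{(k+1)}\restr{S_k\times S_k}=W^{(k)}$ \aex, so the $W^{(k)}$ paste to a symmetric measurable $W$ on $S\times S$; by weak lower semicontinuity of the $L^1$-norm, condition \ref{wc1}, and monotone convergence, $\norml{W}\le\sup_n\norml{W_n}<\infty$, so $W\in L^1(S\times S)$.

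The step I expect to be the main obstacle is passing from the merely local weak convergence on each $S_k\times S_k$ to a global cut-norm bound, since in the \gsf{} setting \ref{wc1}--\ref{wc2} do not imply the missing condition \ref{wc3}, and Dunford--Pettis alone does not rule out mass escaping to infinity in the $W_n$. The key extra input, still unused at this point, is the Cauchy hypothesis, from which I will derive a uniform tail estimate
\begin{equation*}
  \sup_n \sup_{T,U} \Bigabs{\int_{(T\times U)\setminus (S_k\times S_k)} W_n\dd\qxq\mu}\longrightarrow 0 \quad\text{as }k\to\infty.
\end{equation*}
Writing $(T\times U)\setminus (S_k\times S_k)=(T\setminus S_k)\times U\,\cup\,(T\cap S_k)\times(U\setminus S_k)$ as a disjoint union of two rectangles shows that this integral equals a sum of two rectangle-integrals, hence is bounded in absolute value by $2\cn F$ with $F$ standing for the integrand. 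Applied with $F=W_n-W_N$ for a fixed Cauchy-threshold index $N$, this yields at most $2\cn{W_n-W_N}<2\eps$ for $n\ge N$, while the leftover $\sup_{T,U}\abs{\int_{(T\times U)\setminus (S_k\times S_k)}W_N}$ is bounded by $\int_{(S\times S)\setminus (S_k\times S_k)}|W_N|$, which tends to $0$ as $k\to\infty$ since $W_N\in L^1$; the finitely many $n<N$ are handled by enlarging $k$ further.

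With the uniform tail in hand, cut-norm convergence of the full sequence is routine. For $n\ge N$ and arbitrary $T,U$, split along $S_k\times S_k$ and insert $W_{n_j}$:
\begin{multline*}
\Bigabs{\int_{T\times U}(W_n-W)}
\le \cn{W_n-W_{n_j}}+\Bigabs{\int_{(T\times U)\cap (S_k\times S_k)}(W_{n_j}-W)}
\\
+\Bigabs{\int_{(T\times U)\setminus (S_k\times S_k)}W_{n_j}}+\Bigabs{\int_{(T\times U)\setminus (S_k\times S_k)}W}.
\end{multline*}
Choosing $k$ large handles the last two terms (uniform tail and $W\in L^1$ respectively); then, with $T,U,k$ fixed, choosing $j$ large handles the middle term by weak convergence against $\etta_{(T\times U)\cap (S_k\times S_k)}\in L^\infty(S_k\times S_k)$; and the first term is small by the Cauchy property. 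Although $j$ depends on $T,U,k$, the resulting bound does not, so taking the supremum over $T,U$ yields $\cn{W_n-W}\to 0$.
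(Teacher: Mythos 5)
Your proof is correct, and its skeleton coincides with the paper's: Dunford--Pettis on each finite-measure block $S_k\times S_k$ of an exhaustion, extraction of a weakly convergent (sub)sequence, pasting of the local limits into a single integrable symmetric $W$, and a final local-to-global step. The difference lies entirely in that last step. The paper needs no tail estimate: for an arbitrary rectangle $T\times U$ it bounds $\bigabs{\int_{T_N\times U_N}(W_m-W)}$ by $\limsup_\ntoo\cn{W_m-W_n}$ (inserting $W_n$ and using the convergence already established on $\qx{S_N}$), a bound uniform in $N$, and then lets $N\to\infty$ by dominated convergence, which requires only $W_m-W\in L^1(\qx S)$. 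You instead first establish a uniform cut-norm tail estimate --- a cut-norm analogue of the Dunford--Pettis condition \textup{(WC3)} that is genuinely absent from the hypotheses --- via the observation that $(T\times U)\setminus\qxp{S_k}$ is a disjoint union of two measurable rectangles, so that $\bigabs{\int_{(T\times U)\setminus\qxp{S_k}}F}\le 2\cn F$; combining this with the Cauchy hypothesis and the integrability of one fixed $W_N$ gives the uniform tail, and your four-term split then closes the argument (the dependence of $j$ on $T,U$ is harmless, as you note, since the final bound is not). Both routes are sound; the paper's is a little shorter because dominated convergence applied to the limit function does the work of your tail lemma, while your route isolates the independently useful fact that a cut-norm Cauchy sequence of integrable kernels automatically has uniformly small tails in the cut-norm sense, even though \textup{(WC3)} itself need not hold in $L^1$.
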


\begin{proof}
\step{1}{$\mu(S)<\infty$.}
First, consider the case when $(S,\mu)$ is a finite measure space.
  Let $(W_n)$ be a Cauchy sequence for the cut norm, with $\set{W_n}$ \ui.
By the comments before the theorem, the set \set{W_n} is relatively weakly
compact in $L^1(\qx S)$, and thus sequentially weakly compact;
hence, there exists a subsequence $W'_n=W_{k_n}$
and some $V\in L^1(\qx S)$ such that
$W'_n\to V$ weakly in $L^1(\qx S)$ as \ntoo. 
Clearly, $V$ is symmetric and thus a graphon.

In particular, whenever $T,U\subseteq S$,
\begin{equation}
  \int_{T\times U} W'_n \dd\mu^2 \to  \int_{T\times U} V \dd\mu^2.
\end{equation}
Hence, for every $m$,
recalling the definition \eqref{cn},
\begin{equation}
\Bigabs{ \int_{T\times U} (W'_m-V) \dd\mu^2} 
= \lim_\ntoo \Bigabs{ \int_{T\times U} (W'_m-W'_n) \dd\mu^2} 
\le \limsup_\ntoo \cn{W'_m-W'_n}.
\end{equation}
Taking the supremum over all measurable subsets $T$ and $U$, we obtain
\begin{equation}\label{jbx}
\cn{W'_m-V}
\le \limsup_\ntoo \cn{W'_m-W'_n}.
\end{equation}
Since $W'_n$ is a Cauchy sequence, the \rhs{} of \eqref{jbx} tends to 0 as
\mtoo, and thus  $\cn{W'_m-V}\to0$.

We have shown that the original sequence has a subsequence that converges to
$V$ for the cut norm.
Since the sequence is Cauchy, the full sequence $(W_n)$
converges to the same limit,
\ie,
$\cn{W_n-V}\to0$ as \ntoo.

Moreover, note that since $W_n'\to V$ weakly in $L^1$,
\begin{equation}\label{uriel}
  \norml{V}\le\sup_n\norml{W'_n}\le\sup_n\norml{W_n}.
\end{equation}

\step{2}{$\mu(S)=\infty$.}
In general,
  let $S=\bigcup_N S_N$, where $S_N$ is an increasing sequence of subsets of
  $S$ with finite measure.
Let again $(W_n)$ be a \ui{} Cauchy sequence for the cut norm.
Consider the restrictions
$W_{n}\NN:=W_n\restr{\qx{S_N}}$.
Then  
\begin{equation}
\cnx{S_N}{W_{m}\NN-W_{n}\NN}=\cnx{S_N}{W_m-W_n}\le\cn{W_m-W_n}, 
\end{equation}
and thus, for each $N$, 
$(W_{n}\NN)_n$ is  a Cauchy sequence for the cut norm on $S_N\times S_N$. 
Furthermore, these restrictions are \ui, since the graphons $W_n$ are.

Hence, the first part applies to $S_N$, and shows that 
for each $N$
there exists some graphon $V\NN $ on $S_N$ such that
\begin{equation}\label{ariel}
  \cnx{S_N}{W_n-V\NN}
=  \cnx{S_N}{W_n\NN-V\NN}
\to0 \qquad\text{as \ntoo}.
\end{equation}

If $M<N$, then \eqref{ariel} implies 
$ \cnx{S_M}{W_n-V\NN}\to0$ as \ntoo, and thus 
$ \cnx{S_M}{V\MM-V\NN}=0$, so
$V\MM=V\NN\restr{\qx{S_M}}$ a.e.
Consequently, there exists a symmetric measurable function $V$ on
$S\times S$ such that $V\NN=V\restrq{S_N}$ \aex{} for every $N$.
Moreover, \eqref{uriel} implies that
\begin{equation}
  \int_{\qxq{S_N}}|V|
=   \int_{\qxq{S_N}}|V\NN|
\le\sup_n \norml{W_n\NN}
\le\sup_n \norml{W_n},
\end{equation}
which is finite by \ref{wc1}.
Consequently, by monotone convergence,
\begin{equation}
  \int_{\qxq{S}}|V|
\le\sup_n \norml{W_n}<\infty,
\end{equation}
and thus $V$ is integrable and thus
a graphon.

It remains to show that $\cn{W_n-V}\to0$.
Let $T,U\subseteq S$, and let $T_N:=T\cap S_N$, $U_N:=U\cap S_N$.
It follows from \eqref{ariel} that for any fixed $N$,
\begin{equation}
  \begin{split}
 \Bigabs{ \int_{T_N\times U_N}(W_m-V)}
&=
 \Bigabs{ \int_{T_N\times U_N}(W_m-V\NN)}
=\lim_\ntoo  \Bigabs{ \int_{T_N\times U_N}(W_m-W_n)}
\\&
\le\limsup_\ntoo\cn{W_m-W_n}.
  \end{split}
\raisetag\baselineskip
\end{equation}
Letting \Ntoo, we see, by dominated convergence, that
\begin{equation}
 \Bigabs{ \int_{T\times U}(W_m-V)}
\le\limsup_\ntoo\cn{W_m-W_n},
\end{equation}
and taking the supremum over all $T$ and $U$ we obtain
\begin{equation}
\cn{W_m-V}
\le\limsup_\ntoo\cn{W_m-W_n},
\end{equation}
which tends to 0 as \mtoo.
\end{proof}
 
\begin{remark}\label{RTA}
In particular,
a relatively weakly compact set in $L^1(S^2,\mu^2)$ is
relatively   complete for the cut norm.
Moreover, 
a weakly compact set  in $L^1(S^2,\mu^2)$ is
  complete for the cut norm $\cn\cdot$,
since
the argument in the first part of the proof shows that a Cauchy sequence for
the cut norm in a weakly compact set converges to an element of that set.
Note, however, that
the identity map is, in general,  \emph{not} continuous 
$\bigpar{L^1(S^2),\text{weak}}\to \bigpar{L^1(S^2),\cn\cdot}$, 
and  that a
weakly compact set does not have to be compact for the cut norm; see
\refE{Eweakbad}. 
\end{remark}

\refT{TXA} is related to
\citet[Theorem C.7]{BCCZ14a}, which shows that a uniformly integrable set of
graphons  on $\oi$ is relatively compact (and thus relatively complete) for
the cut metric, 
see \refC{CC} below.

We assumed in \refT{TXA} uniform integrability, \ie, \ref{wc1} and
\ref{wc2}. None of these conditions suffices alone; \refE{EA1} shows that
\ref{wc1} is not enough and \refE{EA3} shows that \ref{wc2} is not enough
for relative completeness in the cut norm.
However, if we consider only non-negative graphons, then \ref{wc2} suffices.

\begin{theorem}
  \label{TWC2}
Let $(S,\mu)$ be a \gsf{} measure space.
A \sui{} set of non-negative  graphons in $L^1(S^2,\mu^2)$ 
is
relatively   complete for the cut norm.
\end{theorem}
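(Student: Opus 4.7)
The strategy is to reduce to \refT{TXA} by showing that, for non-negative graphons, condition \ref{wc1} (uniform $L^1$-boundedness) is automatic along any Cauchy sequence for the cut norm. Consequently the hypothesis \ref{wc2} alone suffices.

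The key elementary observation is that for any non-negative $F\in L^1(\qx S)$ the cut norm coincides with the $L^1$-norm, \ie{}
\begin{equation*}
  \cn{F}=\norml{F}.
\end{equation*}
Indeed, by the definition \eqref{cn} and non-negativity, $\int_{T\times U}F\dd\qx\mu\in[0,\norml{F}]$ for every measurable $T,U\subseteq S$, with equality on the right when $T=U=S$. Combined with \eqref{cutl1} this gives the claimed equality.

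Given a Cauchy sequence $(W_n)\subseteq\cA$ for the cut norm, pick $N_0$ such that $\cn{W_n-W_{N_0}}\le 1$ for all $n\ge N_0$. Since each $W_n\ge 0$, the observation above and the triangle inequality for $\cn\cdot$ yield
\begin{equation*}
  \norml{W_n}=\cn{W_n}\le\cn{W_{N_0}}+\cn{W_n-W_{N_0}}\le\norml{W_{N_0}}+1
\end{equation*}
for $n\ge N_0$, so $\sup_n\norml{W_n}<\infty$, \ie{} the sequence satisfies \ref{wc1}. Since $\cA$ is \sui, the sequence $(W_n)$ also satisfies \ref{wc2}, hence is uniformly integrable. \refT{TXA} then produces a graphon $V\in L^1(\qx S)$ with $\cn{W_n-V}\to 0$.

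I do not anticipate any substantive obstacle: the whole content is the equality $\cn F=\norml F$ for $F\ge 0$, which upgrades semi-uniform to uniform integrability along Cauchy sequences at no cost and so lets us invoke \refT{TXA} directly.
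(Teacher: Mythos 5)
Your proposal is correct and follows essentially the same route as the paper: both arguments use that a cut-norm Cauchy sequence of non-negative graphons is automatically bounded in $L^1$ (the paper notes that $\int_{\qx S}W_n$ converges since $\bigabs{\int_{\qx S}(W_n-W_m)}\le\cn{W_n-W_m}$, which for $W_n\ge0$ is exactly your observation that $\cn{W_n}=\norml{W_n}$), so that \ref{wc1} holds and \refT{TXA} applies.
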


\begin{proof}
  If $(W_n)$ is a Cauchy sequence for the cut norm on $S$, then
$\int_{\qx S}W_n$ converges;
hence,
if every $W_n\ge0$, then
\begin{equation}\label{hermia}
C_1:=\sup_n\norml{W_n}=\sup_n\int_{\qx S} W_n<\infty,
\end{equation}
so \set{W_n} satisfies \ref{wc1}.
Consequently, if \set{W_n} also is \sui, 
then it is \ui{} and \refT{TXA} implies that the sequence converges.
\end{proof}

In particular, these results yield results for $L^p$-bounded sets of graphons.
(This too is related to results in \cite[in particular Theorem  2.13]{BCCZ14a}
for the cut metric in the
case $S=\oi$.)

On a finite measure space, for example a probability space,
it is well-known, and easy to see by \Holder's inequality,
that a set that is bounded in $L^p$ for some $p>1$ is uniformly integrable,
and thus \refT{TXA} applies; thus the set is relatively complete for the cut
norm. 
In particular, a uniformly bounded set of graphons on a finite measure space
is relatively complete
for the cut norm. 
This fails for infinite measure spaces, 
see Examples \ref{EA3} and \refE{EA3p}. 
Nevertheless, an $L^p$-bounded set, for $1<p\le\infty$,
is \sui{}, which leads to the following results.
(We shall see in \refE{EA1} that the results do not hold for $p=1$.)

\begin{theorem}\label{TCA}
Let\/ $(S,\mu)$ be a \gsf{} measure space.
Let\/ $1<p\le\infty$ and $C<\infty$ 
and let $\cWpc$ be 
the set of graphons $W$ on $(S,\mu)$ with $\normlp{W}\le C$.
\begin{romenumerate}
\item \label{TCA+}
The set of non-negative graphons in $\cWpc$ is complete for the cut norm.
Hence, a set of non-negative graphons on $S$ that is bounded in $L^p$ is
relatively complete for the cut norm.

In particular, the set of \oi-valued graphons on $S$ is complete for the cut
norm. 
\item \label{TCA1}
If\/ $C_1<\infty$, then
the set of graphons $W$ in $\cWpc$ such that also $\norml{W}\le C_1$
is complete for the cut norm.
Hence, a set of graphons on $S$ that is bounded in both $L^1$ and $L^p$ is
relatively complete for the cut norm.
\item \label{TCA0}
If $\mu$ is a finite measure, then $\cWpc$ is complete for the cut norm.
Hence, a set of graphons on a finite measure space that is bounded in $L^p$ is
relatively complete  for the cut norm.
\end{romenumerate}
\end{theorem}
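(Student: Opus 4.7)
The plan is to deduce each part of the theorem from Theorem~\ref{TXA} or Theorem~\ref{TWC2} after verifying the appropriate integrability condition, and then to check that the cut-norm limit remains in the prescribed class.

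The key ingredient is an elementary consequence of the $L^p$ bound: if $\normlp{W}\le C$ with $1<p<\infty$, then for every $B>0$,
\begin{equation*}
\int_{|W|>B}|W|\dd\mu^2 \;\le\; B^{1-p}\int_{|W|>B}|W|^p\dd\mu^2 \;\le\; C^p B^{1-p},
\end{equation*}
which tends to $0$ as $B\to\infty$; for $p=\infty$ the bound $|W|\le C$ a.e.\ makes this trivial. Thus every subset of $\cWpc$ satisfies condition~\ref{wc2}, i.e., is semiuniformly integrable.

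For part~\ref{TCA+} I would apply Theorem~\ref{TWC2} to the non-negative graphons in $\cWpc$ to get relative completeness. To upgrade to completeness, let $(W_n)$ be Cauchy in the cut norm with limit $V$. The proof of Theorem~\ref{TXA} (which underlies Theorem~\ref{TWC2}) constructs $V$ locally on each $S_N\times S_N$ as a weak $L^1$ limit of a subsequence; weak $L^1$ convergence preserves a.e.\ non-negativity, so $V\ge0$. For the $L^p$ bound when $1<p<\infty$, from the $L^p$-bounded sequence I extract (by reflexivity of $L^p(\qx{S_N})$) a further subsequence converging weakly in $L^p(\qx{S_N})$; that weak $L^p$-limit coincides with $V$ on $S_N\times S_N$ (by testing against $L^\infty(\qx{S_N})\subseteq L^{p/(p-1)}(\qx{S_N})$, which has finite measure), and weak lower semicontinuity yields $\normlp{V\restrq{S_N}}\le C$; monotone convergence then gives $\normlp V\le C$. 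When $p=\infty$, the pointwise bound $|W_n|\le C$ a.e.\ combined with weak $L^1$ convergence forces $|V|\le C$ a.e., because $C-W_n\ge0$ and $C+W_n\ge0$ are non-negative and their weak $L^1$ limits $C\mp V$ are thus also non-negative. The $\oi$-valued special case follows by taking $p=\infty$, $C=1$.

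Part~\ref{TCA1} proceeds identically: the extra $L^1$ bound supplies condition~\ref{wc1}, so the class is uniformly integrable and Theorem~\ref{TXA} gives relative completeness; the limit satisfies both bounds by applying the lower-semicontinuity argument to $\norml\cdot$ (directly from weak $L^1$ convergence on each $S_N$) and to $\normlp\cdot$ as above. Part~\ref{TCA0} reduces to part~\ref{TCA1}: on a finite measure space, \Holder's inequality yields $\norml W\le\normlp W\cdot\mu(S)^{2(1-1/p)}$, so an $L^p$-bound automatically provides an $L^1$-bound.

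The step I anticipate as the main obstacle is the closure of the class under cut-norm limits, since cut-norm convergence is strictly weaker than $L^p$ convergence and a priori no lower semicontinuity of $\normlp\cdot$ is available along cut-norm-convergent sequences. I circumvent this by exploiting not the conclusion but the \emph{internal construction} in the proof of Theorem~\ref{TXA}: $V$ is obtained as a weak $L^1$ limit on each finite piece $S_N$, and along that weak-$L^1$-convergent subsequence the standard weak-$L^p$ extraction (or, when $p=\infty$, the pointwise $L^\infty$ bound) plus weak lower semicontinuity deliver the required norm bounds.
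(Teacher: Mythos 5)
Your proposal is correct, and the overall architecture (relative completeness from Theorem~\ref{TXA} or \ref{TWC2}, plus a separate argument that the limit stays in the class) matches the paper's; the difference lies entirely in how you establish that the limit belongs to the class. The paper proves a standalone closure result, Lemma~\ref{LL}, stating that the set of non-negative graphons and each $L^p$-ball are closed under \emph{arbitrary} cut-norm convergence; its proof averages over finite partitions (where cut-norm convergence controls the rectangle integrals) and recovers $W$ by martingale convergence. You instead exploit the internal construction in the proof of Theorem~\ref{TXA}: the limit $V$ arises on each finite piece $\qx{S_N}$ as a weak $L^1$ limit of a subsequence, so non-negativity passes to $V$ by testing against indicators, the $L^p$ bound follows from reflexivity of $L^p(\qx{S_N})$ plus weak lower semicontinuity (with the identification of the weak $L^p$ and weak $L^1$ limits via $L^\infty\subseteq L^{p'}$ on the finite-measure piece), and the $p=\infty$ case follows from positivity of $C\pm W_n$. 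This is a clean and arguably more elementary route for Theorem~\ref{TCA} itself, since it avoids the martingale machinery; what it gives up is the reusable general statement of Lemma~\ref{LL}, which the paper invokes again later (e.g.\ in Example~\ref{EA3}, Lemma~\ref{LLgd} and Theorem~\ref{TB}) for sequences whose limits do not come packaged with a weak-$L^1$ construction. One small point worth making explicit in your write-up: since cut-norm limits are unique a.e.\ by \eqref{cawdor}, showing that the \emph{particular} $V$ built in the proof of Theorem~\ref{TXA} lies in the class does suffice for completeness, so your weaker membership statement is enough here.
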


Before giving the proof, we give a simple lemma. It is certainly known, 
but we have not found an explicit reference so for completeness, we give a
proof. 
(Part \ref{LLp} follows in the case $S=\oi$ from the more advanced
\cite[Theorem 2.13]{BCCZ14a}, and the proof uses similar ideas as there.)

\begin{lemma}
  \label{LL}
Let $(S,\mu)$ be a \gsf{} measure space and let $W_n$ and $W$ be graphons on $S$
such that $\cn{W_n-W}\to0$ as \ntoo.
\begin{romenumerate}
\item \label{LL+}
If each $W_n\ge0$, then $W\ge0$ a.e.
\item \label{LLp}
If\/ $1\le p\le \infty$ and
each $W_n\in L^p(S,\mu)$, then $\normlp{W}\le\sup_n\normlp{W_n}$.
\end{romenumerate}
In other words, the set of non-negative graphons on $S$
and, for any $p\ge1$ and $C$, the set of graphons $W$ on $S$
with $\normlp{W}\le C$ are closed for the cut norm.
\end{lemma}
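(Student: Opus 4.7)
For part (i) my plan is to pass to the limit on rectangular sets and then extend via a $\sigma$-algebra approximation argument. For any measurable $T,U\subseteq S$, the definition of the cut norm immediately gives $\bigabs{\int_{T\times U}(W_n-W)\dd\mu^2}\le\cn{W_n-W}\to 0$, so $\int_{T\times U}W\dd\mu^2=\lim_n\int_{T\times U}W_n\dd\mu^2\ge 0$ since each $W_n\ge 0$. The finite signed measure $\nu(A):=\int_A W\dd\mu^2$ on $\qx S$ is therefore non-negative on every measurable rectangle. Since $\mu$ is $\sigma$-finite, finite disjoint unions of finite-measure rectangles form a set-algebra dense (in the symmetric-difference pseudometric) in any finite-measure sub-$\sigma$-algebra of $\cF\otimes\cF$; combined with the absolute continuity of $\nu$ with respect to $\qx\mu$ (from $W\in L^1$), this transfers the inequality $\nu(R)\ge 0$ from such finite unions $R$ to any measurable $B$ with $\qx\mu(B)<\infty$, and then by countable additivity to all of $\qx S$. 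Hence $\nu\ge 0$, i.e., $W\ge 0$ a.e.

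For part (ii) I would first handle the endpoint $p=\infty$ by a local reduction to (i). Fix an exhaustion $S=\bigcup_N S_N$ with $\mu(S_N)<\infty$; on each $\qx{S_N}$ the graphons $C\etta_{\qx{S_N}}\pm W_n\restr{\qx{S_N}}$ are non-negative and in $L^1$ (as $C\mu(S_N)^2<\infty$), and their mutual cut-norm distances on $\qx{S_N}$ are bounded by $\cn{W_n-W}\to 0$, because the supremum in \eqref{cn} over subsets of $S_N$ is bounded by that over subsets of $S$. Part (i) therefore gives $|W|\le C$ a.e.\ on $\qx{S_N}$, and letting $N\to\infty$ yields $\normloo W\le C$.

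For $1<p<\infty$ the plan is a duality argument. Let $\cD$ denote the linear span of indicators $\etta_{T\times U}$ with $\mu(T),\mu(U)<\infty$. The bilinear form of \eqref{macduff} implies that for every $\phi\in\cD$ we have $\int W_n\phi\dd\mu^2\to\int W\phi\dd\mu^2$, while \Holder's inequality gives $\bigabs{\int W_n\phi\dd\mu^2}\le\normlp{W_n}\normlq\phi\le C\normlq\phi$ with $q:=p/(p-1)$. Passing to the limit, $\phi\mapsto\int W\phi\dd\mu^2$ is a linear functional on $\cD$ of norm $\le C$ for $\normlq\cdot$; since $q<\infty$, $\cD$ is norm-dense in $L^q(\qx S)$, so this functional extends uniquely to a bounded functional on $L^q$ of norm $\le C$. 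By Riesz representation it is given by some $g\in L^p$ with $\normlp g\le C$, and comparison on rectangular indicators forces $g=W$ a.e., so $\normlp W\le C$. For $p=1$ the density argument breaks down ($\cD$ is only weak-$*$ dense in $L^\infty$), so I would instead exhaust by $A_N:=\qx{S_N}$, approximate $\sgn(W)\etta_{A_N}$ in $L^1$-norm by $\etta_R-\etta_{R'}\in\cD$ with $R,R'$ disjoint rectangular approximations of $\set{W>0}\cap A_N$ and $\set{W<0}\cap A_N$, and use absolute continuity of $|W|\dd\mu^2$ together with the same limit argument to deduce $\int_{A_N}|W|\le C+\eps$ for every $\eps>0$; letting $\eps\to 0$ and then $N\to\infty$ gives $\norml W\le C$.

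The main obstacle is the mismatch between the rectangular (product) test sets to which the cut norm directly gives access and the general measurable sets or $L^q$ functions appearing in the conclusion, exacerbated by the fact that $\cD$ is not norm-dense in $L^\infty$. This is what forces the $\sigma$-algebra approximation in (i), the local reduction for $p=\infty$, and the separate direct approximation of $\sgn W$ for the endpoint $p=1$.
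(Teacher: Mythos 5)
Your proof is correct, but it takes a genuinely different route from the paper's. The paper handles (i) and all of (ii) with a single martingale argument: after reducing to a probability space it picks finite $\sigma$-fields $\cF_N$ increasing to a $\sigma$-field for which $W$ is measurable, notes that cut-norm convergence forces the step functions $\E\bigpar{W_n\mid\qx{\cF_N}}$ (whose values are averages of $W_n$ over rectangles of the partition) to converge to $\E\bigpar{W\mid\qx{\cF_N}}$ for each fixed $N$, and then sends $N\to\infty$ via the martingale convergence theorem; positivity and $L^p$-bounds pass through conditional expectations for every $p$ simultaneously, with Fatou at the end. You instead use, for (i), approximation in symmetric difference of an arbitrary finite-measure set by finite disjoint unions of finite-measure rectangles together with uniform absolute continuity of $A\mapsto\int_A|W|$ --- the right move, since non-negativity on single rectangles is not a $\lambda$-system condition and cannot be propagated by Dynkin's lemma alone, whereas it is additive over disjoint unions --- and, for (ii), duality of $L^p$ with $L^q$ on the dense span of rectangle indicators, with separate endpoint treatments ($p=\infty$ by local reduction to (i), and $p=1$ by approximating $\sgn(W)$ directly, where the disjointness of the two rectangular approximants is what keeps the test function bounded by $1$). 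The paper's approach buys uniformity over $p$ and avoids Riesz representation; yours avoids both martingale convergence and the separability step of manufacturing the generating sets $A_i$, at the cost of a three-way case split. One small point of terminology: the finite disjoint unions of finite-measure rectangles form a ring rather than an algebra when $\mu(S)=\infty$, but the approximation theorem you invoke holds for rings on which the measure is $\sigma$-finite, so the argument is unaffected.
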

\begin{proof}
  First, consider the case when $(S,\mu)$ is a probability space.
There exists a sequence $(A_i)_{i=1}^\infty$ of measurable subsets of $S$
such  that if 
$\cF_\infty$ is the $\gs$-field generated by \set{A_i}, then $W$ is
$\qxp{\cF}$-measurable; see \eg{} the proof of \cite[Lemma 7.3]{SJ249}.

Let $\cF_N$ be the sub-$\gs$-field generated by \set{A_1,\dots,A_N}.
Then $\qx{\cF_N}$ is an increasing sequence of $\gs$-fields on $\qx S$
and their union generates $\qx{\cF_\infty}$, so the martingale limit theorem
yields
\begin{equation}
  \label{orlando}
\E\bigpar{W\mid\qx{\cF_N}}\asto \E\bigpar{W\mid\qx{\cF_\infty}}=W
\qquad\text{as \Ntoo}.
\end{equation}
Furthermore, each $\cF_N$ is finite, and generated by some partition
$\cP_N:=\set{B_{Nj}:1\le j\le m_N}$ of $S$.
The conditional expectation
$\E\bigpar{W\mid\qx{\cF_N}}$ is constant on each ``rectangle'' $B_{Ni}\times
  B_{Nj}$, and equals there, provided the rectangle has positive measure,
the average $(\mu(B_{Ni})\mu(B_{Nj}))\qw\int_{B_{Ni}\times B_{Nj}} W\dd\mu^2$.
The same holds for each $W_n$, and consequently, the assumption
$\cn{W_n-W}\to0$ implies that
\begin{equation}
  \label{celia}
\E\bigpar{W_n\mid\qx{\cF_N}}\asto \E\bigpar{W\mid\qx{\cF_N}}
\qquad\text{as \ntoo},
\end{equation}
for each fixed $N$.

For \ref{LL+}, we note that if $W_n\ge0$, then 
$\E\bigpar{W_n\mid\qx{\cF_N}}\ge0$ and thus
\eqref{celia} and
\eqref{orlando} yield $W\ge0$ a.s.
Similarly, for \ref{LLp}, if $C:=\sup_n\normlp{W_n}$, then
$\normlp{\E\bigpar{W_n\mid\qx{\cF_N}}}\le C$ and \eqref{celia} and
\eqref{orlando} yield $\normlp{W}\le C$, 
using Fatou's lemma twice if $p<\infty$ (and 
directly if $p=\infty$).

This completes the proof if $\mu(S)=1$. If $\mu(S)<\infty$, we replace $\mu$
by $\mu/\mu(S)$ and the result follows from the case just treated.

In general, 
$S=\bigcup_m S_m$, where $S_m$ is an increasing
sequence of subsets with finite measure. Since $\cn{W_n-W}\to0$ implies
$\cnx{S_m}{W_n-W}\to0$, it follows from the finite measure case that for
every $m$,
in case \ref{LL+}, $W\ge0$ \aex{} on $\qx{S_m}$ and in case \ref{LLp},
$\norm{W}_{L^p(S_m)}\le\sup_n\norm {W_n}_{L^p(S_m)}\le\sup_n\normlp{W_n}$.
The conclusions follow, for \ref{LLp} using monotone convergence 
when $p<\infty$.
\end{proof}

\begin{proof}[Proof of \refT{TCA}]
Note first that, as said above, $\cWpc$ is \sui. 
In fact, if $1<p<\infty$ and $W\in\cWpc$, then we have
$\int_{|W|>B}|W|\dd\mu\le B^{1-p}C^p$ and \ref{wc2} follows;
if $p=\infty$, take $B=C$ in \ref{wc2}.

  \pfitemref{TCA+}
The set of non-negative graphons in $\cWpc$ is relatively complete by
\refT{TWC2} and closed by \refL{LL}. Hence the set is complete.

For the final sentence, note that the set of \oi-valued graphons equals the
set of non-negative 
graphons in $\cW(\infty,1)$.

\pfitemref{TCA1}
The set $\cWpc\cap\cW(1,C_1)$ of graphons $W$ on $S$ such that 
$\normlp{W}\le C$ and $\norml{W}\le C_1$ is \ui, since \ref{wc1} is assumed
and \ref{wc2} 
follows from the $L^p$-bound as seen above. Hence, 
$\cWpc\cap\cW(1,C_1)$ is relatively complete for the cut norm by \refT{TXA}.
Furthermore, $\cWpc$ and $\cW(1,C_1)$ are closed by \refL{LL}\ref{LLp}.
Hence, $\cWpc\cap\cW(1,C_1)$ is complete.

  \pfitemref{TCA0}
If $\mu$ is a finite measure, then $\cWpc$ is $L^1$-bounded by \Holder's
inequality, and thus,
as said before the theorem, $\cWpc$ is uniformly integrable.
Hence the result follows by \refT{TXA} and \refL{LL}, or from \ref{TCA1}.
(Alternatively, it follows that $\cWpc$ is weakly compact in $L^1$;
hence the result follows by \refR{RTA}.)
\end{proof}

\subsection{Completeness for the cut metric}
It is time to turn to our main interest, the cut metric $\dcut$ defined in
\refSS{SSdcut}. 
We repeat that part \ref{TBoi} of the following theorem
was originally proved in \cite[Corollary 2.13]{BCCH16}.
Furthermore,
for the special case of graphons on \oi,
\ref{TBui} follows from \citet[Theorem C.7]{BCCZ14a}, and 
\ref{TBp+} and \ref{TBp1} follow from
\cite[Theorem 2.13]{BCCZ14a}.
(These theorems in
\cite{BCCZ14a} show much stronger results on compactness,
see Corollaries \ref{CC}--\ref{CCp} below.)

\begin{theorem}\label{TB}
\begin{thmenumerate}
\item \label{TBui}
Any \ui{} set of  graphons is relatively complete for the cut
metric. 
\item \label{TB+}
A \sui{} set of non-negative graphons 
is 
relatively complete for the cut metric.
\item \label{TBp+}
If\/ $1<p\le\infty$ and $C<\infty$, then the set of
non-negative graphons $W$  with $\normlp{W}\le C$ is complete
for the cut metric. 
Hence, a set of non-negative graphons  that is bounded in $L^p$ is
relatively complete for the cut metric.
\item \label{TBoi}
The set of\/ \oi-valued graphons is complete for
the cut metric.
\item \label{TBp1}
If\/ $1<p\le\infty$ and $C,C_1<\infty$, then the set of
 graphons $W$  with $\normlp{W}\le C$ 
and $\norml{W}\le C_1$ is complete
for the cut metric. 
Hence, a set of graphons that is bounded in both $L^1$ and $L^p$ is
relatively complete for the cut metric.
\end{thmenumerate}
\end{theorem}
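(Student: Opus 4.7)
The plan is to reduce each of the five statements to the corresponding cut-norm completeness result (\refT{TXA}, \refT{TWC2}, or \refT{TCA}) by constructing, for any $\dcut$-Cauchy sequence, a sequence of pull-backs living on a common space $\bbRp$ that is Cauchy for the cut norm $\cn\cdot$.

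Suppose $(W_n)$ is $\dcut$-Cauchy and extract a subsequence $(W_{n_k})$ with $\dcut(W_{n_k},W_{n_{k+1}})<2^{-k}$. By \refP{P2.8} each $W_{n_k}$ is equivalent to a graphon on $\bbRp$, and by \refE{E=} the relevant properties (non-negativity, $\oi$-valuedness, and bounds on $\normlp{\cdot}$ or $\norml{\cdot}$) are preserved by equivalence; so we may assume all $W_{n_k}$ are defined on $\bbRp$ with Lebesgue measure. \refP{P4.3c} then furnishes \mpb{s} $\psi_k:\bbRp\to\bbRp$ with $\cn{W_{n_k}-W_{n_{k+1}}\qpsi k}<2^{-k}$. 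Setting $\varphi_1:=\mathrm{id}$ and inductively $\varphi_{k+1}:=\varphi_k\circ\psi_k$ (each again a \mpb), the identity $W_{n_{k+1}}\qphi{k+1}=(W_{n_{k+1}}\qpsi k)\qphi k$ together with the invariance \eqref{dunsinane} yields
\begin{equation}
\cn{W_{n_k}\qphi k - W_{n_{k+1}}\qphi{k+1}}=\cn{W_{n_k}-W_{n_{k+1}}\qpsi k}<2^{-k},
\end{equation}
so $\tilde W_k:=W_{n_k}\qphi k$ is a Cauchy sequence in the cut norm on $\bbRp$.

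Because the maps $\varphi_k$ are measure-preserving, the integrability and pointwise properties of $(\tilde W_k)$ coincide with those of $(W_{n_k})$: uniform integrability in case~(i); semi-uniform integrability and non-negativity in case~(ii); non-negativity with $\normlp{\cdot}\le C$ in cases~(iii) and~(iv); and bounds on both $\normlp{\cdot}$ and $\norml{\cdot}$ in case~(v). Applying \refT{TXA} to~(i), \refT{TWC2} to~(ii), and \refT{TCA} to~(iii)--(v), we obtain a graphon $V$ on $\bbRp$ with $\cn{\tilde W_k-V}\to 0$; in cases~(iii)--(v), the completeness assertion of \refT{TCA} also places $V$ in the appropriate set. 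Since $\dcut(W_{n_k},V)\le\cn{\tilde W_k-V}$ by the definition \eqref{dcut1}, the subsequence converges in $\dcut$, and Cauchyness of the full sequence forces $W_n\to V$ in $\dcut$.

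The only real work is the telescoping construction of $(\tilde W_k)$ on a common measure space; this rests on \refP{P4.3c} (which permits restricting the infimum to \mpb{s} of $\bbRp$ rather than general couplings) and on the invariance \eqref{dunsinane} of $\cn\cdot$ under \mpb{s}, without which successive coupling maps cannot be composed into a single pull-back sequence. Everything else is transfer of hypotheses under measure-preserving maps and a direct appeal to the cut-norm theorems.
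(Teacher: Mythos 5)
Your proposal is correct and follows essentially the same route as the paper: the paper packages your telescoping construction as a separate lemma (\refL{Lcauchy}, built on \refP{P2.8}, \refP{P4.3c} and \eqref{dunsinane}) and then, exactly as you do, transfers the hypotheses along the equivalences via \refT{TC=}/\refE{E=} and invokes Theorems \ref{TXA}, \ref{TWC2} and \ref{TCA} (together with \refL{LLgd} for closedness of the limit set). The only blemish is the composition order in $\varphi_{k+1}$: since $(W\qpsi{k})\qphi{k}=W^{\psi_k\circ\varphi_k}$, you should set $\varphi_{k+1}:=\psi_k\circ\varphi_k$ rather than $\varphi_k\circ\psi_k$; with that fix the displayed identity you use is exactly the paper's \eqref{kent}.
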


We first prove a general lemma that will enable us to reduce various parts
to the corresponding claims for the cut norm.
\begin{lemma}
  \label{Lcauchy}
Suppose that $(W_n)$ is a sequence of graphons that is a Cauchy sequence
for the cut metric.
Then there exists a sequence $(W_n')$ of graphons on $\bbRp$ with $W_n\cong
W_n'$ such that $(W_n')$ is a Cauchy sequence for the cut norm
$\cnx{\bbRp}\cdot$. 
\end{lemma}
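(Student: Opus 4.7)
My plan is to reduce to graphons on $\bbRp$ via \refP{P2.8}, extract a rapidly Cauchy subsequence, and then use \refP{P4.3c} iteratively to convert infima over couplings into explicit \mpb{s}, yielding actual cut-norm estimates between successive terms along a coherent alignment of the sequence.

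Since $\dcut(W_n, \tW_n) = 0$ whenever $\tW_n \equ W_n$, I may use \refP{P2.8} to replace each $W_n$ by an equivalent graphon on $\bbRp$ without affecting the Cauchy property, and so assume that every $W_n$ lives on $\bbRp$. Using Cauchyness, pick an increasing sequence of indices $(n_k)_{k \ge 1}$ with $n_1 = 1$ and $\dcut(W_m, W_l) < 2^{-k}$ whenever $m, l \ge n_k$. I now inductively build \mpb{s} $\psi_k : \bbRp \to \bbRp$, with $\psi_1$ the identity, and set $W_{n_k}' := W_{n_k}^{\psi_k}$: given $\psi_k$, \refP{P4.3c} applied to the pair $(W_{n_k}', W_{n_{k+1}})$ on $\bbRp$ yields an \mpb{} $\alpha_k$ with
\begin{equation}
\cn{W_{n_k}' - W_{n_{k+1}}^{\alpha_k}} < \dcut(W_{n_k}', W_{n_{k+1}}) + 2^{-k} = \dcut(W_{n_k}, W_{n_{k+1}}) + 2^{-k} < 2 \cdot 2^{-k};
\end{equation}
set $\psi_{k+1} := \alpha_k$. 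For the intermediate indices $n_k < n < n_{k+1}$, another application of \refP{P4.3c} furnishes an \mpb{} $\gamma_n$ with $\cn{W_{n_k}' - W_n^{\gamma_n}} < 2 \cdot 2^{-k}$, and I set $W_n' := W_n^{\gamma_n}$. Every $W_n'$ is a pullback of $W_n$ by an \mpb{} and in particular $W_n' \equ W_n$.

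To see that $(W_n')$ is Cauchy for $\cnx{\bbRp}\cdot$, consider $m, n \ge n_k$ with, say, $m \in [n_j, n_{j+1})$ and $n \in [n_l, n_{l+1})$ where $j, l \ge k$. The triangle inequality, together with the construction and the telescoping bound $\cn{W_{n_j}' - W_{n_l}'} \le \sum_{i=j}^{l-1} \cn{W_{n_i}' - W_{n_{i+1}}'} < \sum_{i \ge k} 2^{-i+1} = 2^{-k+2}$, gives $\cn{W_m' - W_n'} < 8 \cdot 2^{-k}$, which tends to zero as $k \to \infty$. The one genuine obstacle is that the infimum in \refP{P4.3c} need not be attained, so each $\alpha_k$ and $\gamma_n$ is only near-optimal; the summable budget $2^{-k}$ is what absorbs this slack, while the inductive composition of pullbacks into a coherent $\psi_k$ at each subsequence index ensures that all comparisons take place on the same underlying copy of $\bbRp$.
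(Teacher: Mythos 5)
Your proposal is correct and follows essentially the same route as the paper: reduce to $\bbRp$ via Proposition~\ref{P2.8}, extract a rapidly Cauchy subsequence, use Proposition~\ref{P4.3c} to realize near-optimal measure-preserving bijections with a summable error budget, and align intermediate indices to the nearest subsequence term. The only cosmetic difference is that the paper applies Proposition~\ref{P4.3c} to the original consecutive pairs and then composes the resulting bijections into $\psi_n=\gf_{n-1}\circ\dotsm\circ\gf_1$, whereas you apply it directly to the already-aligned graphon $W_{n_k}'$ so that no composition is needed (your closing remark about "inductive composition" slightly misdescribes your own construction, but this does not affect correctness).
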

\begin{proof}
By \refP{P2.8}, we may replace the 
graphons by equivalent graphons on $(\bbRp,\leb)$, and we may thus assume that
every $W_n$ is defined on $\bbRp$.

First, suppose that
$\dcut(W_n,W_{n+1})<2^{-n}$ for all $n$.
By \refP{P4.3c}, this implies the existence of
 \mpb{s} $\gf_n:\bbRp\to\bbRp$ such that
 \begin{equation}
   \label{lear}
\cn{W_n-W_{n+1}\qphi{n}}<2^{-n}.
 \end{equation}

Let $\psi_n:=\gf_{n-1}\circ\dotsm\circ\gf_1$ (with $\psi_1$ the identity); 
this is a \mpb{} $\bbRp\to\bbRp$. 
Then $\psi_{n+1}=\gf_n\circ\psi_n$, and thus
$W_{n+1}\qpsi{n+1}=W_{n+1}^{\gf_n\circ\psi_n}=(W_{n+1}\qphi{n})\qpsi{n}$.
Hence, \eqref{lear} implies
\begin{equation}\label{kent}
  \begin{split}
	  \cn{W_n\qpsi{n}-W_{n+1}\qpsi{n+1}}
&=  \cn{W_n\qpsi{n}-(W_{n+1}\qphi{n})\qpsi{n}}
=  \cn{(W_n-W_{n+1}\qphi{n})\qpsi{n}}
\\&
=  \cn{W_n-W_{n+1}\qphi{n}}<2^{-n}.
  \end{split}
\end{equation}
Consequently, the sequence $(W_n\qpsi n)$ is a Cauchy sequence for the cut
norm $\cn\cdot$ on $\bbRp$, so we may take $W_n':=W_n\qpsi n$.

In general,
we can select a subsequence $n_k$ such that 
\begin{equation}
  \label{brutus}
\dcut\bigpar{W_{n_k},W_m}<2^{-k}
\qquad \text{for all $k\ge1$ and $m\ge n_k$}.
\end{equation}
In particular, $\dcut(W_{n_k},W_{n_{k+1}})<2^{-k}$, so the case just treated
applies to the subsequence $(W_{n_k})$ and shows the existence of graphons
$W_{n_k}'\cong W_{n_k}$ defined on $\bbRp$ such that
$\cn{W_{n_k}'-W_{n_{k+1}}'}<2^{-k}$.
Moreover, for any $m\in(n_k,n_{k+1})$, \eqref{brutus} implies
$\dcut(W_{n_k}',W_m)=\dcut(W_{n_k},W_m)<2^{-k}$,
and thus \refP{P4.3c} shows that there exists a \mpb{}
$\gf_m$ such that $\cn{W_{n_k}'-W_m\qphi m}<2^{-k}$; let $W_m':=W_m\qphi m$.
It follows that $(W_n')$ is a Cauchy sequence for the cut norm.
(Actually, for our purposes it would be enough to consider the subsequence
$W'_{n_k}$; the full statement and the last part of the proof is only for
completeness.)
\end{proof}

We also need a version of \refL{LL} for the cut metric.
\begin{lemma}
  \label{LLgd}
 Let $W_n$ and $W$ be graphons
such that $\dcut\xpar{W_n,W}\to0$ as \ntoo.
\begin{romenumerate}
\item \label{LLgd+}
If each $W_n\ge0$, then $W\ge0$ a.e.
\item \label{LLgdp}
If each $W_n\in L^p(S,\mu)$, then $\normlp{W}\le\sup_n\normlp{W_n}$.
\end{romenumerate}
In other words, the set of non-negative graphons
and, for any $C$, the set of graphons $W$
with $\normlp{W}\le C$ are closed for the cut metric.
\end{lemma}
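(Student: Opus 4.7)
The plan is to reduce the statement to its cut-norm analogue \refL{LL} by realising the cut-metric convergent sequence as a cut-norm convergent sequence on the common space $\bbRp$. The main ingredients are \refP{P2.8} (every graphon is equivalent to one on $\bbRp$), \refP{P4.3c} (on $\bbRp$ the infimum in $\dcut$ is attained over \mpb{s}), and the pull-back invariance \eqref{dunsinane} of the cut norm.

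First, I would invoke \refP{P2.8} to replace each $W_n$ and $W$ by equivalent graphons on $(\bbRp,\leb)$. This reduction is harmless, because non-negativity is an invariant of the equivalence class (Example \ref{E=}\ref{E=>0}) and so is the $L^p$-norm (Example \ref{E=}\ref{E=Lp}); hence both parts of the hypothesis and both parts of the desired conclusion are preserved. With everything living on $\bbRp$, \refP{P4.3c} supplies for each $n$ a \mpb{} $\gf_n:\bbRp\to\bbRp$ with
\begin{equation*}
\cn{W_n-W^{\gf_n}}<\dcut(W_n,W)+1/n\longrightarrow 0.
\end{equation*}
Setting $\psi_n:=\gf_n^{-1}$, which is again a \mpb{} of $(\bbRp,\leb)$, and pulling back by $\psi_n$, the cut-norm invariance \eqref{dunsinane} gives
\begin{equation*}
\cn{W_n^{\psi_n}-W}=\cn{(W_n-W^{\gf_n})^{\psi_n}}=\cn{W_n-W^{\gf_n}}\longrightarrow 0.
\end{equation*}

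The graphons $\tW_n:=W_n^{\psi_n}$ are pull-backs of $W_n$ along a \mpm, so $\tW_n\ge 0$ a.e.\ whenever $W_n\ge 0$ and $\normlp{\tW_n}=\normlp{W_n}$ for every $p\in[1,\infty]$. Applying \refL{LL} to the cut-norm convergent sequence $(\tW_n)$ with limit $W$ on $\bbRp$ then yields $W\ge 0$ a.e.\ in case \ref{LLgd+}, and $\normlp{W}\le\sup_n\normlp{\tW_n}=\sup_n\normlp{W_n}$ in case \ref{LLgdp}. There is no genuine obstacle: everything is book-keeping around the pull-back trick that turns cut-metric convergence into cut-norm convergence on a fixed space, after which \refL{LL} does the real work.
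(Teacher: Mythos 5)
Your proof is correct and takes essentially the same route as the paper's: reduce to $\bbRp$ via \refP{P2.8} (noting, as you do via \refE{E=}, that non-negativity and $L^p$-norms are equivalence invariants), convert cut-metric convergence into cut-norm convergence on the fixed space $\bbRp$ via \refP{P4.3c}, and finish with \refL{LL}. The only cosmetic difference is that the paper applies \refP{P4.3c} with the roles of $W$ and $W_n$ interchanged, obtaining pull-backs $W_n'=W_n\qphi{n}$ with $\cn{W-W_n'}\to0$ directly, which avoids your (perfectly valid) extra step of pulling back by $\gf_n\qw$.
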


\begin{proof}
As above, 
by \refP{P2.8}, we may replace the 
graphons by equivalent graphons on $(\bbRp,\leb)$, and  assume that
$W$ and every $W_n$ is defined on $\bbRp$,
using also   \refE{E=}\ref{E=>0}\ref{E=Lp}.
By \refP{P4.3c}, this implies the existence of
pull-backs $W_n':=W_n\qphi n\cong W_n$ such that 
$\cn{W-W_n'}< \dcut(W,W_n)+1/n\to0$ as \ntoo.
Now apply \refL{LL}.
\end{proof}

\begin{remark}\label{RLgd}
  By considering suitable subsequences, it follows that the conclusion in 
\refL{LLgd}\ref{LLgdp} can be improved to
$\normlp{W}\le\liminf_n\normlp{W_n}$.
\end{remark}

\begin{proof}[Proof of \refT{TB}]
\pfitemref{TBui}
Suppose that $(W_n)$ is a  sequence of graphons (possibly defined on different
measure spaces) that is \ui{} and a Cauchy
sequence for the cut metric.

By \refL{Lcauchy}, there exist 
$W_n'\cong W_n$ such that $W_n'$ are defined on $\bbRp$ and
the sequence $(W_n')$ is a Cauchy sequence for the cut
norm $\cn\cdot$ on $\bbRp$.  Note that replacing $W_n$ by
the equivalent 
$W_n'$ preserves uniform integrability;
this follows from \refT{TC=} applied to
\ref{wc1} and \ref{wc2} with explicit bounds.

Hence, \refT{TXA} shows that there exists a graphon $W$ on $\bbRp$
such that 
$W_n'$ converges to $W$ in cut norm as \ntoo.
This implies
\begin{equation}
  \dcut(W_n,W)\le \cn{W_n'-W}\to0 \qquad\text{as \ntoo},
\end{equation}
and thus $W_n\to W$ in the cut metric.

\pfitemref{TB+}
Argue as in the proof of \ref{TBui}, now using \refT{TWC2}.

\pfitemref{TBp+}
Argue as in the proof of \ref{TBui}, now using
\refT{TCA}\ref{TCA+} and 
\refL{LLgd}.
(Alternatively, note that \eqref{hermia} holds in this case too, and use
\ref{TBp1}.)

\pfitemref{TBoi}
This is just a special case of \ref{TBp+}.

\pfitemref{TBp1}
The set is \ui, as seen in the proof of \refT{TCA},
so it is relatively complete for the cut metric by \ref{TBui}.
Moreover, the set is closed by \refL{LLgd}.
(Alternatively, one could use \refT{TCA}\ref{TCA1}.)
\end{proof}

\refT{TB} is our main  result about completeness. Note, however,
that the conditions there are not necessary. In particular, as shown in
\refE{Enotui}, (semi)uniform integrability is not necessary for convergence and
completeness in cut metric.

On the other hand, some conditions are needed, and we give a number of
(counter) examples to illustrate that.
In particular,  \refE{EA1} shows that the set of all non-negative
graphons is not complete, and also that \refT{TB}\ref{TBp+} does not hold for
$p=1$; furthermore,
\refE{EA3} shows that \refT{TB}\ref{TB+} and  \ref{TBp+} do not hold without the
assumption that
the graphons are non-negative, even if we assume uniform boundedness (\ie,
 $p=\infty$). 

\begin{remark}
For a given measure space $(S,\mu)$, the cut norm is equivalent to the
injective tensor product norm in $L^1\itensor L^1$, which is given by taking
the supremum over all $g,h:S\to[-1,1]$ in \eqref{macduff}, see \eg{}
\cite[Chapter 3]{Ryan} or
\cite[Remark 4.2]{SJ249}.
Hence every Cauchy sequence for the cut norm converges in the completed
injective tensor product $L^1(S)\itensor L^1(S)$. 
However, the limit may lie outside $L^1(S\times S)$.
In fact, $L^1(S\times S)$ is a dense subspace of
$L^1(S)\itensor L^1(S)$, but typically (\eg{} for
$S=\oi$ or $\bbRp$) the norms are not 
equivalent, as is witnessed \eg{} by $V_n$ in \refE{EA3} below, 
and thus $L^1(S\times S)\subsetneq L^1(S)\itensor L^1(S)$.
Hence there exists Cauchy sequences for the cut norm (and thus also for the cut
metric) with limits
not in $L^1(S\times S)$.
It is also easy to see that such sequences can be made symmetric, \ie, graphons.

On an abstract level, the failure of completeness in general for the cut
norm is thus almost
obvious. The examples below give some simple concrete examples.
(In these examples, the Cauchy sequences thus have limits in 
$ L^1(S)\itensor L^1(S)\setminus L^1(S\times S)$.)

See further \eg{} \cite{Ryan} and note,
in contrast, 
that the completed projective tensor product
$L^1(S)\ptensor L^1(S)$ equals  $L^1(S\times S)$.

One might be tempted to extend the definition of graphons to include
all symmetric elements of $L^1(S)\itensor L^1(S)$. However, we doubt that
this is useful. In particular, we do not see any way to define random graphs
generalizing the construction in \refSS{SSRG} unless $W$ is a function.
\end{remark}

We say that a  graphon $W$ on $S=\oi$ or $\bbRp$ is a \emph{step} graphon if
there is a 
partition of $S$ into a finite number of intervals $I_i$ (the steps) 
such that $W$ is
constant on each $I_i\times I_j$.
(Note that the definition in \cite{SJ249} is more general.)

\begin{example}[An $L^1$-bounded  set of  non-negative graphons on $\oi$
that is not relatively complete]\label{EA1}
This example is essentially the same as 
\citet[Proposition 2.12b]{BCCZ14a} but stated differently; 
for completeness we give full details.

Note that for each $n\ge1$ there exists a step graphon $U_n$ on $\oi$
with steps of equal measure,
values in \set{0,1}, 
and
\begin{equation}\label{cordelia}
\int U_n=\tfrac12,
\qquad
\cn{U_n-\tfrac12}<4^{-n}.
\end{equation}

One way to see this is to consider the \ER{}
random graph $G(N,M)$ with $M=N^2/4$ for large even $N$. 
Then, as \Ntoo, the graphon $W_{G(N,M)}$ converges in probability to the
constant graphon $\frac12$. Consequently, we may take $U_n=W_G$ where $G$ is
a realization of $G(N,N^2/4)$ for some large $N=N(n)$. (If $N$ is chosen
large enough, most realizations will do.)
Alternatively we may take $W_G$ where $G$ is a realization of
$G(N,\frac12)$, or a sufficiently large deterministic quasi-random graph
such as a Paley graph, adjusted (arbitrarily) to have exactly $N^2/4$ edges.

We construct inductively a sequence of step graphons $W_n$ on \oi,
with values in
$\set{0,2^n}$.
Let $W_0=1$. Given $W_{n}$, denote its steps by $I_{n,i}$.
On each rectangle (actually square) $I_{n,i}\times I_{n,j}$ where
$W_n$ is non-zero, and thus equal to $2^{n}$, 
let $W_{n+1}$ be  a scaled copy of $2^{n+1} U_{n+1}$; on the other rectangles,
where $W_n=0$, let $W_{n+1}=0$.
In other words: Let $U_n$ have $m_n$ steps of length $m_n\qw$, and let
$M_n:=\prod_{i=1}^n m_i$. 
Then, let 
$\hU_n(x,y):= U_n(\frax{M_{n-1} x},\frax{M_{n-1} y})$
and $W_n:=2^n\prod_{j=1}^n \hU_j$.

It is easily seen that 
$\cn{W_n-W_{n+1}}<2^{-n}$, so $(W_n)$ is a
Cauchy sequence for the cut norm, and thus for the cut metric.
However, $W_n$ does not converge in the cut metric (and thus also not in the cut
norm).
To see this, suppose that $\dcut(W_n,W)\to0$ for some graphon $W$.
Then $W\ge0$ a.e.
We do not assume that $W$ is defined on \oi, but we may and shall
assume that is defined on $\bbRp$, see \refP{P2.8}.
We also extend each $W_n$ trivially to $\bbRp$, adding another step
$I_{n,0}=(1,\infty)$. 
Then, by \refP{P4.3c}, there exist \mpb{s}
$\gf_n:\bbRp\to\bbRp$ such that $\cn{W_n-W^{\gf_n}}<\dcut(W_n,W)+1/n\to 0$.

For any $N\ge1$,
on each rectangle $I_{N,i}\times I_{N,j}$ where
$W_N=0$, we have $W_n=0$ for all $n\ge N$. Call such rectangles \emph{good}
(for $N$).
Then, on each good rectangle $Q=I_{N,i}\times I_{N,j}$, for $n\ge N$,
\begin{equation}\label{prospero}
{\int_Q W^{\gf_n}}={\int_Q (W^{\gf_n}-W_n)}
\le\cn{W\qphi n-W_n} \to 0.   
\end{equation}
Let $A_N$ be the union of all good rectangles for a
given $N$, and let $B_N:=\bbRp^2\setminus A_N$; note that $\leb^2(B_N)=2^{-N}$
since $1=\int W_N=2^N\lebb(B_N)$.
Given any $\eps>0$, we can find $N$ such that if $B$ is a set with
$\leb^2(B)<2^{-N}$, then 
$\int_B W <\eps$. This implies that
$\int_{B_N} W^{\gf_n} = \int_{\gf_n\tensor\gf_n(B_N)}W<\eps$ for every $n$.
Furthermore, \eqref{prospero} implies that $\int_{A_N} W^{\gf_n}\to0$ as \ntoo.
Since
\begin{equation}
  \int_{\bbRp^2} W = \int_{\bbRp^2} W\qphi{n}
=\int_{A_N} W\qphi{n}+\int_{B_N} W\qphi{n},
\end{equation}
it follows by letting \ntoo{} that  $\int_{\bbRp^2} W\le\eps$. Since $\eps$ is
arbitrary, this implies $\int W=0$ (and thus $W=0$ \aex).

On the other hand, we have 
$\int W_n=1$ for each $n$, and it follows from $\dcut(W_n,W)\to0$
that $\int W=1$,
a contradiction.

This shows that $(W_n)$ is a Cauchy sequence for the cut metric that does not
converge. 

Note that $\norml{W_n}=1$, so this example shows that we cannot take $p=1$
in \refT{TCA}\ref{TCA+}--\ref{TCA0} or \refT{TB}\ref{TBp+},\ref{TBp1}.
\end{example}

\begin{example}[the set of  {$[-1,1]$}-valued graphons on $\bbRp$
	is not complete] \label{EA3}
  Let $S=\bbRp$. For $n\ge1$, let $V_n$ be a graphon on \oi{} with values in
  $\set{\pm1}$, and thus $\norml{V_n}=1$, such that
$\cn{V_n}<2^{-n}$. For example, we can take $V_n:=2U_n-1$ with $U_n$ as in
  \refE{EA1}. 

Let $\tV_n(x,y):=V(x-n+1,y-n+1)$ when $(x,y)\in(n-1,n]^2$ and 0
  otherwise, \ie, $\tV_n$ is $V_n$ translated to $(n-1,n]^2$.
Finally, let $W_n:=\sum_{k=1}^n \tV_k$.
Then 
\begin{equation}\label{lysander}
\cn{W_n-W_{n-1}}=\cn{\tV_n}=\cn{V_n}<2^{-n}, 
\end{equation}
and thus $(W_n)$ is a  Cauchy sequence for the cut norm.

However, there is no graphon $W$ such that $\cn{W_n-W}\to0$.
In fact, suppose that this holds for some $W$. 
Then, for any integer $N$, 
\begin{equation}
  \cnx{[0,N]}{W_n-W}
=
  \cn{(W_n-W)\etta_{\qxq{[0,N]}}}
\le   \cn{W_n-W}\to0
\end{equation}
as \ntoo. On the other hand, for $N\ge n$, 
$W_n=W_N$ on $\qxq{[0,N]}$.
Hence,
$\cnx{[0,N]}{W_N-W}=0$, and thus $W=W_N$ \aex{} on $[0,N]^2$.
Consequently,
\begin{equation}\label{helena}
  \int_{[0,N]^2}|W|
= \int_{[0,N]^2}|W_N|
=\sum_{k=1}^N \int_{[0,N]^2}|\tV_k|=N.
\end{equation}
Letting \Ntoo, we see that $\int|W|=\infty$, which contradicts our
assumption that $W$ is a graphon and thus integrable.
Consequently, the uniformly bounded Cauchy sequence $(W_n)$ does not converge
in the cut norm.

We have so far considered the cut norm; we now show that the same properties
hold for the cut metric. 
It follows from \eqref{lysander} that $(W_n)$ also is a Cauchy sequence for
the cut metric. 

Suppose that $\dcut(W_n,W)\to0$ for some graphon $W$.
We may suppose that $W$ too is defined on $\bbRp$. Then, by
\refP{P4.3c}, there exist \mpb{s} $\gf_n:\bbRp\to\bbRp$
such that 
\begin{equation}
\cn{W_n-W\qphi n}<\dcut(W_n,W)+1/n\to0  
\end{equation}
as \ntoo.
Taking the restrictions to $\qx{[0,N]}$, we see that for any $n\ge N$, 
\begin{equation}
\cn{W_N-W\qphi n\restr{\qx{[0,N]}}}
=
\cn{(W_n-W\qphi n)\restr{\qx{[0,N]}}}
\le \cn{W_n-W\qphi n}\to0  
\end{equation}
and thus, by \refL{LL}\ref{LLp}, for every $N\ge1$,
\begin{equation}
  \norml{W_N}\le \sup_n \norml{W\qphi n\restr{\qx{[0,N]}}}
\le \sup_n \norml{W\qphi n}=\norml{W}.
\end{equation}
However, $\norml{W_N}=N$ by \eqref{helena}, a contradiction.

Consequently, $(W_n)$ is also for the cut metric a Cauchy sequence that does
not converge.
\end{example}

\begin{example}[An $L^p$-bounded set that is  not relatively complete]
  \label{EA3p}
Let $V_n$ be as in \refE{EA3} and let
$\VV_n:=2^{-{2n}}\striip{V_n}{2^{2n}}$,
see \eqref{str2}.
Define now $\tV_n(x,y):=\VV_n(x-2^n,y-2^n)$ on $(2^n,2^{n+1}]^2$ and 0
  elsewhere, and let again $W_n:=\sum_{k=1}^n\tV_k$.
We have, as in \eqref{lysander},
\begin{equation}
  \cn{\tV_n}=\cn{\VV_n}=2^{-2n}\cn{\striip{V_n}{2^{2n}}}=\cn{V_n}<2^{-n}.
\end{equation}
Furthermore, for any $p\ge1$,
\begin{equation}\label{demetrius}
  \int|\tV_n|^p
=  \int|\VV_n|^p
=2^{-2pn}\leb([0,2^n])^2
=2^{2(1-p)n};
\end{equation}
in particular,
$  \norml{\tV_n}=1$,
and the same argument as in \refE{EA3} (now using $[0,2^{N+1}]^2$) shows that
$(W_n)$ is a Cauchy sequence in cut norm and cut metric that does not converge
to any graphon.

Moreover, for any $p>1$, by \eqref{demetrius},
\begin{equation}
  \int|W_n|^p=\sum_{k=1}^n\int|\tV_k|^p
\le \sumk 2^{-2(p-1)n}<\infty,
\end{equation}
and thus the sequence $(W_n)$ is $L^p$-bounded for every $p>1$.
\end{example}

\begin{example}[weak convergence does not imply convergence in cut norm or
	cut metric]  \label{Eweakbad}
Let $h_n:=\sgn(\sin(2^n\pi x))$ on $S=\oi$ (the Rademacher functions).
Define
\begin{equation}
  W_n(x,y):=
  \begin{cases}
	h_n(x),& 0<x<\frac12<y<1, \\
	h_n(y),& 0<y<\frac12<x<1, \\
0,&\text{otherwise}.
  \end{cases}
\end{equation}
Then $h_n\wto 0$ weakly in $L^1(\oi)$ as \ntoo, and it follows easily that
$W_n\wto 0$ weakly in $L^1(S^2)$.
However, if $T_n:=\set{x\in(0,\frac12):h_n(x)>0}$, then
\begin{equation}
  \cn{W_n-0}=\cn{W_n}\ge\int_{T_n\times(\frac12,1)} W_n =\frac18.
\end{equation}
Hence $W_n\not\to0$ in the cut norm.

Moreover, it is easily seen that for $n\ge2$,
$W_n$ is the pull-back $W_2\qphi n$ of $W_2$ by the
\mpm{} $\gf_n$ defined by $\gf_n(x)=\frac12\frax{2^{n-1}x}$ for $x\le \frac12$,
$\gf(x)=x$ for $x>\frac12$. Hence, $W_n\cong W_2$ and, trivially, $W_n\to
W_2\neq0$ for the metric $\dcut$. Consequently,
$W_n\not\to0$ also for the cut metric $\dcut$.

It follows that the set $\set{W_n}_{n\ge1}\cup\set0$ is weakly
compact, but discrete and thus not compact for both the cut norm and the cut
metric.

We can get a similar example with \oi-valued graphons by considering
$\tW_n:=(W_n+1)/2$.
\end{example}

\begin{example}[uniform integrability is not necessary I]\label{Enotui}
Similarly to the construction of $U_n$ in \refE{EA1}, we can for each $n$
find a step graphon $V_n$ on \oi{} 
with steps of equal measure,
values in \set{0,1}, 
and
\begin{equation}\label{regan}
\cn{V_n-\tfrac1n}<4^{-n}.
\end{equation}
For example, we  can take $V_n$ as $W_G$ for a realization of $G(N,1/n)$ for
a sufficiently 
large $N=N(n)$.

If we further define $\xV_n:=nV_n$, then by \eqref{regan}, 
$\xV_n$ is \set{0,n}-valued and
\begin{equation}\label{goneril}
\cn{\xV_n-1}<n4^{-n}<2^{-n}.
\end{equation}
Hence, $\xV_n\to1$ as \ntoo{} for the cut norm, and thus also for the cut
metric. 
However, since $\xV_n$ is \set{0,n}-valued, for any $B$ and all $n>B$,
\begin{equation}
  \int_{\xV_n>B}\xV_n = \int_{\oi^2}\xV_n \to 1,
\end{equation}
where we also used \eqref{goneril}. Hence, \ref{wc2} does not hold
for \set{\xV_n}, so the set is not \ui, and not even \sui.

Consequently, the set $\set{\xV_n}_{n\ge1}\cup\set{1}$ is compact
and complete for both the cut norm on $\oi$ and the cut metric, but not \ui.

Note that in this example, the graphons are all non-negative and defined on \oi,
but unbounded. 
\end{example}

\begin{example}[uniform integrability is not necessary II]\label{Enotui2}
We obtain a related example, where the graphons are $[-1,1]$-valued but
defined on $\bbRp$, by letting $V_n$ be as in \refE{EA3} and taking the
stretched graphons $\bW_n:=\strii {V_n}n$. 
Then, \cf{} \eqref{edmund}, $\norml{\bW_n}=n$ and $\cn{\bW_n}<n2^{-n}$, so
$\bW_n\to0$ in cut norm and thus in cut metric, but $\bW_n$ is not \ui,
since \ref{wc1} does not hold.
  
Note that in this example, in contrast to \refE{Enotui},
the graphons are uniformly bounded, but take negative values and are defined
on an infinite measure space.
\end{example}

\begin{remark}
  We cannot combine the assertions of Examples \ref{Enotui} and
  \ref{Enotui2}.
First, a uniformly bounded set of graphons defined on \oi{} is automatically
\ui.
Secondly, a sequence of non-negative graphons that converges in the cut
norm or cut metric is bounded in $L^1$; hence, if it also is uniformly
bounded, then it is \ui.
\end{remark}

\subsection{Completeness for $\gd_1$ and $\gd_p$}\label{SScomplete1p}
Our main interest is the cut metric $\dcut$, but we also give a
simple corresponding result for $\gd_1$ and $\gd_p$.

\begin{theorem}\label{TYB}
  \begin{romenumerate}
  \item 
  The set of all graphons is complete for the metric $\gd_1$.
  \item \label{TYBp}
For any $p>1$, and any $C<\infty$,  the set of non-negative graphons $W$
in $L^p$ such that $\norml W\le C$ is complete for the metric $\gd_p$.
  \end{romenumerate}
\end{theorem}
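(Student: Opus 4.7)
The plan is to mirror the argument used for the cut metric in \refT{TB}: reduce to a Cauchy sequence for the appropriate norm on $\qx{\bbRp}$ via pull-backs by \mpb{s}, and then invoke the completeness of $L^1$ or $L^p$ as a function space. The advantage over the cut-metric case is that no uniform integrability hypothesis is needed, since $L^1$ and $L^p$ are themselves complete. The first step is the analogue of \refL{Lcauchy} for $\gd_1$ and $\gd_p$. Given a $\gd_1$-Cauchy (respectively $\gd_p$-Cauchy) sequence $(W_n)$, \refP{P2.8} lets us assume each $W_n$ is defined on $\bbRp$. The $L^1$ (respectively $L^p$) analogue of \refP{P4.3c}, which is \cite[Proposition 4.3(c) and Remark 4.4]{BCCH16}, furnishes, after passing to a fast-Cauchy subsequence, \mpb{s} $\gf_n:\bbRp\to\bbRp$ with $\norml{W_n-W_{n+1}\qphi n}<2^{-n}$ (respectively $\normlp{W_n-W_{n+1}\qphi n}<2^{-n}$). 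Composing and pulling back exactly as in the proof of \refL{Lcauchy} yields graphons $W_n'\cong W_n$ on $\bbRp$ that form a Cauchy sequence for $\norml\cdot$ (respectively $\normlp\cdot$) on $\qx{\bbRp}$.

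For part (i), completeness of $L^1(\qx{\bbRp})$ provides $W_n'\to W$ in $L^1$ for some integrable $W$. Along an a.e.\ convergent subsequence, $W$ inherits symmetry from the $W_n'$, so $W$ is a graphon, and
\begin{equation*}
\gd_1(W_n,W)=\gd_1(W_n',W)\le\norml{W_n'-W}\to0.
\end{equation*}

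For part (ii), completeness of $L^p(\qx{\bbRp})$ yields a limit $W\in L^p(\qx{\bbRp})$. Along an a.e.\ convergent subsequence, $W$ inherits symmetry and non-negativity from the $W_n'\ge0$. Integrability of $W$ together with $\norml{W}\le C$ follows from Fatou's lemma applied to this subsequence, using $\norml{W_n'}=\norml{W_n}\le C$. Thus $W$ lies in the specified class, and $\gd_p(W_n,W)\le\normlp{W_n'-W}\to0$.

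The only substantive step is the $L^1$ and $L^p$ analogue of \refP{P4.3c}, but this is \cite[Remark 4.4]{BCCH16} under the standing hypotheses. I therefore anticipate no real obstacle; the proof is genuinely simpler than the cut-metric analogue precisely because the relevant norm completeness is a classical property of Lebesgue spaces, and the additional constraint $\norml W\le C$ in part (ii) is exactly what Fatou needs to push the $L^1$-bound to the limit.
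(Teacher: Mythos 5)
Your proposal is correct and follows essentially the same route as the paper: the paper's own (very terse) proof likewise invokes the $\gd_p$-version of Lemma \ref{Lcauchy} via \cite[Remark 4.4]{BCCH16}, the completeness of $L^1(\bbRp^2)$ and $L^p(\bbRp^2)$, and the persistence of the bound $\norml{W}\le C$ in the limit. Your write-up simply supplies the details (a.e.\ convergent subsequence, symmetry, non-negativity, Fatou) that the paper leaves implicit.
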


\begin{proof}
  Both parts follow by the same argument as in the proof of \refT{TB}, using
  \cite[Remark 4.4]{BCCH16} to see that a $\gd_p$-version of \refL{Lcauchy}
  holds for $p\ge1$,
and (instead of \refT{TXA})
the fact that $L^1(\bbRp^2)$ and $L^p(\bbRp^2)$
  are complete; for \ref{TYBp} also the fact that if $W_n\to W$ in $L^p$ and
   $\norml{W_n}\le C$ for all $n$, then $\norml W\le C$.
\end{proof}

The $L^1$-boundedness in \ref{TYBp} is necessary in general, as is seen by the
following example. 

\begin{example}[the set of non-negative graphons is not complete for
	$\gd_p$, $p>1$]
Let $f(x):=1/(x+1)$ and define the graphon $W$ on $\bbRp$ by
$W(x,y):=f(x)f(y)$. Let further $W_n:=W\etta_{\qxq{[0,n]}}$.
Then, for every $p>1$, $f\in L^p(\bbRp)$ and thus $W\in L^p(\bbRp^2)$ and
$W_n\to W$ in $L^p$, and thus in $\gd_p$. Hence $(W_n)$ is a Cauchy sequence
of graphons for $\gd_p$. However, $W\notin L^1$, so $W$ is not a graphon.
Furthermore, $W_n$ cannot have any other limit $V$ that is a non-negative
graphon.
To see this, suppose that $V$ is a non-negative graphon and that
$\gd_p(W_n,V)\to0$. We may, as usual, assume that $V$ is defined on $\bbRp$.
By \cite[Proposition 4.3(c) and Remark 4.4]{BCCH16}, there exist \mpb{s}
$\gf_n:\bbRp\to\bbRp$ such that $\normlp{W_n-V\qphi n}\to0$.
Consequently,
\begin{equation}
  \normlp{W-V\qphi n}\le   \normlp{W-W_n}+  \normlp{W_n-V\qphi n}\to0
\end{equation}
as \ntoo. Hence $V\qphi n\to W$ in $L^p$, but this implies
\begin{equation}
  \norml{W}\le\sup_n\norml{V\qphi n}=\norml{V}<\infty,
\end{equation}
a contradiction.
\end{example}

\begin{remark}
If we only consider non-negative $L^p$ graphons on probability
spaces as in \cite{BCCZ14a}, then the set of all of them is complete for
$\gd_p$, for any $p>1$; this 
follows since a Cauchy sequence $(W_n)$
has
\begin{equation}
  \sup_n\norml{W_n}
\le   \sup_n\normlp{W_n}
=\sup_n\gd_p(W_n,0)<\infty
\end{equation}
and thus $W_n$ converges by \refT{TYB}\ref{TYBp}.
\end{remark}

\section{Compactness}\label{Scompact}

  \citet[Theorem 2.12]{BCCH16} 
prove a partial characterisation of
relatively compact sets for the cut metric.
Their theorem is stated in terms on convergent (sub)sequences, but 
it implies immediately a statement on relative compactness, 
\viz{} the case of uniformly bounded graphons in \refT{TC} below.
(In fact, \cite[Theorem 2.12]{BCCH16}  is equivalent to this 
compactness result together with the completeness result in
\refT{TB}\ref{TBoi}.)

We give a new proof of their result and extend it in several ways; 
in particular,
we give extensions
from uniformly bounded to uniformly integrable
graphons, and  from non-negative to signed
graphons. 
(Recall, however, that uniform integrability is not needed for convergence,
and thus not for compactness, 
not even for graphons on \oi, see \refE{Enotui}.) 

We begin with some definitions.

We say, as in \cite[Definition 2.11]{BCCH16} that a set $\cW$ of graphons
has \emph{\urt} if 
for every $\eps>0$, there exists $M<\infty$ such that for every graphon
$(W,S,\mu)\in\cW$, there exists a set $U\subseteq S$ such that $\mu(U)\le M$
and 
\begin{equation}
  \label{pyramus}
\norml{W-W\etta_{\qx U}}<\eps.
\end{equation}
Similarly, we say that a set $\cW$ of graphons has \emph{\ucrt} if
for every $\eps>0$, there exists $M<\infty$ such that for every graphon
$(W,S,\mu)\in\cW$, there exists a set $U\subseteq S$ such that $\mu(U)\le M$
and 
\begin{equation}
  \label{thisbe}
\cn{W-W\etta_{\qx U}}<\eps.
\end{equation}
I.e., we relax the $L^1$-norm in \eqref{pyramus} to the cut norm.
For non-negative graphons the two notions are equivalent, as shown in the
following simple lemma.
(\refE{E-urt} shows that the notions differ in general.)

\begin{lemma}\label{Lurt}
  \begin{thmenumerate}
  \item \label{Lurt1}
 Any set of graphons with \urt{} has \ucrt.
  \item \label{Lurt2}
A set of non-negative graphons has \urt{} if and only if it has \ucrt.
  \end{thmenumerate}
 \end{lemma}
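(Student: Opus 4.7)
Plan for the proof: The main tool is the comparison \eqref{cutl1} together with the elementary observation that for a non-negative function $F \in L^1(\SxS)$, the supremum in the definition \eqref{cn} of the cut norm is attained at $T=U=S$, so that $\cn F = \int_{\SxS} F \dd\mu^2 = \norml F$.

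For part \ref{Lurt1}, the argument is immediate: if $\cW$ has \urt, then given $\eps>0$ pick $M$ as in the definition, and for each $(W,S,\mu)\in\cW$ pick $U$ with $\mu(U)\le M$ and $\norml{W-W\etta_{\qx U}}<\eps$; then by \eqref{cutl1}
\begin{equation*}
\cn{W-W\etta_{\qx U}}\le\norml{W-W\etta_{\qx U}}<\eps,
\end{equation*}
so the same $M$ and $U$ witness \ucrt.

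For part \ref{Lurt2}, the nontrivial direction is \ucrt{} $\Rightarrow$ \urt{} for non-negative graphons. The key point is that since $W\ge 0$, one has $W\etta_{\qx U}\le W$ pointwise, so the difference $W-W\etta_{\qx U}=W\etta_{(\qx U)^c}\ge 0$. Applying the observation above to this non-negative function gives
\begin{equation*}
\norml{W-W\etta_{\qx U}}=\cn{W-W\etta_{\qx U}},
\end{equation*}
so any $M$ and $U$ witnessing \ucrt{} automatically witness \urt. Combined with part \ref{Lurt1}, this yields the equivalence. There is no real obstacle here; the only thing to be slightly careful about is to verify that the supremum in \eqref{cn} is indeed attained at $T=U=S$ for non-negative integrands, which follows directly from the definition since then $|\int_{T\times U}F|=\int_{T\times U}F\le\int_{\SxS}F$.
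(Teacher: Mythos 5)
Your proof is correct and follows exactly the paper's own argument: part (i) is the inequality \eqref{cutl1}, and part (ii) rests on the observation that $W-W\etta_{\qx U}\ge0$ when $W\ge0$, so that its cut norm coincides with its $L^1$ norm. Your explicit verification that the supremum in \eqref{cn} is attained at $T=U=S$ for non-negative integrands is a correct (if routine) justification of that last step.
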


\begin{proof}
  \pfitemref{Lurt1}
By \eqref{cutl1}.

\pfitemref{Lurt2}
If $W\ge0$, then also $W-W\etta_{\qx U}\ge0$ and thus
$\cn{W-W\etta_{\qx U}}=\norml{W-W\etta_{\qx U}}$.
\end{proof}

We say that a set $\cW$ of graphons is \emph{\ucr} if
for every $\eps>0$, there exists $B<\infty$ such that for every graphon
$(W,S,\mu)\in\cW$, there exists a graphon $V$ on $ S$ such that $|V|\le B$
and 
\begin{equation}
  \label{quince}
\cn{W-V}<\eps.
\end{equation}
Note that the corresponding notion with $\norml\cdot$ in \eqref{quince} is
equivalent to \ref{wc2}.
Hence a \sui{} set is \ucr.

\begin{remark}\label{Ruur}
  \citet[Definition C.2]{BCCZ14a} give, for graphons on \oi,
a definition of what they call
\emph{uniformly upper regular} sequences of graphons and use this for 
(essentially) a
characterisation of sequential compactness in 
\cite[Theorems C.13 and  C.5]{BCCZ14a}.
Our definition of \ucr{} is quite different, but it is used for a similar
purpose. It seems interesting to investigate the relation between these
notions, but we have not yet done so, and leave it as an open problem.
\end{remark}

Recall that if $A$ is a set in a metric space $(X,d)$, then
an \emph{$\eps$-net} for $A$
is a subset $Y\subseteq X$ such that for every $x\in A$
there exists $y\in Y$ with $d(x,y)<\eps$.
The set $A$
 is \emph{totally  bounded} if for every $\eps>0$ there  exists a finite
 $\eps$-net for $A$. (We may here further assume that the $\eps$-net is a
 subset of $ A$, but we
find it more convenient not to do so.)
Recall also that 
$A$ is compact if and only if it is complete and totally bounded; 
it follows that
 $A$ is relatively compact if and only if it is relatively complete and
totally bounded.

We can now characterise totally bounded sets of graphons.
Note that condition in \ref{TD1} below is a combination of the two conditions
in \ref{TD2}.
\begin{theorem}\label{TD}
  Let $\cW$ be a set of graphons. Then the following are equivalent.
  \begin{romenumerate}
  \item \label{TDtb}
$\cW$ is totally bounded for the cut metric.
  \item \label{TD2}
$\cW$ is \ucr{} and has \ucrt.
  \item \label{TD1}
For every $\eps>0$ there exists $M<\infty$ and $B<\infty$ such that for
every graphon $(W,S,\mu)\in\cW$ there exists a set $U\subseteq S$ 
with $\mu(U)\le M$ and a
graphon $V$ on $S$ such that $|V|\le B\etta_{\qx U}$ and $\cn{W-V}<\eps$. 
  \end{romenumerate}
\end{theorem}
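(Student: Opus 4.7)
The equivalence (ii)\,$\iff$\,(iii) comes from direct cut-norm manipulation. For (iii)\,$\implies$\,(ii), the graphon $V$ produced by (iii) is itself a bounded approximant witnessing that $\cW$ is \ucr; moreover, the identity $W-W\etta_{\qx U}=(W-V)-(W-V)\etta_{\qx U}$ together with $\cn{f\etta_{\qx U}}\le\cn{f}$ yields $\cn{W-W\etta_{\qx U}}\le 2\cn{W-V}<2\eps$, giving \ucrt. Conversely, from (ii), take $V_1$ with $|V_1|\le B$ and $\cn{W-V_1}<\eps/2$ using \ucr-ness, and $U$ with $\mu(U)\le M$ and $\cn{W-W\etta_{\qx U}}<\eps/2$ using \ucrt; then $V:=V_1\etta_{\qx U}$ satisfies $|V|\le B\etta_{\qx U}$ and $\cn{W-V}\le\cn{W-W\etta_{\qx U}}+\cn{(W-V_1)\etta_{\qx U}}<\eps$.

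For (iii)\,$\implies$\,(i), fix $\eps>0$ and apply (iii) with tolerance $\eps/3$, obtaining constants $M,B$ and, for each $W\in\cW$, a pair $U_W,V_W$. Since $V_W$ vanishes off $\qx{U_W}$, it is a trivial extension of its restriction to the finite measure space $(U_W,\mu|_{U_W})$; applying \refP{P2.8} to this finite-measure restriction (with the atomless reduction used in the proof of \refT{T1}), we may represent it on $[0,\mu(U_W)]$ and then trivially extend by $0$ to produce a graphon $V_W'$ on the fixed finite measure space $[0,M]$ with $V_W'\cong V_W$ and $|V_W'|\le B$. The family $\{V_W':W\in\cW\}$ is a uniformly bounded set of graphons on the finite measure space $[0,M]$ and is therefore totally bounded for $\dcut$ by the classical compactness theorem for bounded graphons; see \cite{BR09}. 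A finite $\eps/3$-net for $\{V_W'\}$ is then a finite $\eps$-net for $\cW$, since the triangle inequality together with $\dcut(V_W,V_W')=0$ and $\dcut(W,V_W)\le\cn{W-V_W}<\eps/3$ gives $\dcut(W,V_i')<\eps$ whenever $\dcut(V_W',V_i')<\eps/3$.

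For (i)\,$\implies$\,(iii), fix $\eps>0$ and a finite $\eps/3$-net $\{W_1,\dots,W_N\}$ for $\cW$. By \refP{P2.8} we may assume every graphon in the argument, including each $W_i$, is defined on $(\bbRp,\leb)$. For each $W_i\in L^1(\bbRp^2)$, dominated convergence supplies $U_i\subset\bbRp$ with $\leb(U_i)<\infty$ and $B_i<\infty$ so that $V_i:=W_i\etta_{\qx{U_i}}\etta_{|W_i|\le B_i}$ satisfies $\norml{W_i-V_i}<\eps/3$; set $M:=\max_i\leb(U_i)$ and $B:=\max_i B_i$. Given $W\in\cW$, choose $i$ with $\dcut(W,W_i)<\eps/3$ and, by \refP{P4.3c}, an \mpb{} $\gf$ of $\bbRp$ with $\cn{W-W_i\qgf}<2\eps/3$. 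Setting $U:=\gf\qw(U_i)$ and $V:=V_i\qgf$, measure-preservation of $\gf$ gives $\leb(U)\le M$, a pointwise computation gives $|V|\le B\etta_{\qx U}$, and \eqref{dunsinane} gives
\begin{equation*}
\cn{W-V}\le\cn{W-W_i\qgf}+\cn{W_i-V_i}<2\eps/3+\eps/3=\eps,
\end{equation*}
which verifies (iii).

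The one real obstacle is invoking the classical compactness theorem for uniformly bounded graphons on a finite measure space in the step (iii)\,$\implies$\,(i); everything else is bookkeeping that combines truncation, $\sigma$-finiteness, and the invariance of the cut norm under \mpm{s} and trivial extensions via Propositions \ref{P2.8} and \ref{P4.3c}.
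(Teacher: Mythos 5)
Your argument follows essentially the same route as the paper: the cycle of implications is rearranged (you prove (ii)$\iff$(iii) directly, then (iii)$\implies$(i) and (i)$\implies$(iii), whereas the paper runs (i)$\implies$(ii)$\implies$(iii)$\implies$(i)), but the substance is the same --- truncate and restrict the net elements to sets of finite measure, transport along \mpb{s} via Propositions \ref{P2.8} and \ref{P4.3c}, and for total boundedness reduce to uniformly bounded graphons on a fixed finite measure space and invoke the classical compactness theorem (the paper rescales explicitly via $V\mapsto\bigpar{\stri{V}{M\qww}+B}/(2B)$ to land in the \oi-valued probability-space setting of Lov\'asz--Szegedy; your appeal to the bounded-kernel compactness of Bollob\'as--Riordan serves the same purpose). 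Your derivations of (ii)$\iff$(iii) using $\cn{f\etta_{\qx U}}\le\cn{f}$, \ie{} \eqref{malcolm}, and the identity $W-W\etta_{\qx U}=(W-V)-(W-V)\etta_{\qx U}$ (valid because $V=V\etta_{\qx U}$) are correct.

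There is, however, one genuine gap, in (i)$\implies$(iii). You write that by \refP{P2.8} you ``may assume every graphon in the argument, including each $W_i$, is defined on $(\bbRp,\leb)$''. Replacing the net elements $W_i$ is harmless, since an $\eps$-net need not lie in $\cW$. But replacing $W\in\cW$ itself is not free: condition (iii) asserts the existence of a set $U\subseteq S$ and a graphon $V$ on the \emph{original} space $(S,\mu)$, whereas your construction produces a set and a graphon on $\bbRp$ attached to an \emph{equivalent} copy $W'\cong W$. Transferring the conclusion back to $(W,S,\mu)$ is not automatic: the chain of equivalences behind \refP{P2.8} may pass through pull-backs in the ``wrong'' direction, and pushing $U$ and $V$ down along a \mpm{} $\gf$ requires the conditional-expectation device of \refL{LQ3} (condition on the pull-back $\gs$-algebra $\gf\qw(\cF_2)$, note that conditional expectation does not increase the cut norm by \eqref{macduff}, and that the conditioned approximant is itself a pull-back with the same bound). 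The paper flags exactly this issue at the start of its proof of (i)$\implies$(ii) and disposes of it by \refL{LQ3}; your writeup needs the analogous statement that condition (iii) is invariant under replacing $\cW$ by an equivalent family, which does not follow from \refP{P2.8} alone.
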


We postpone the proof to the next section, and give first some consequences
of the theorem. We consider two different simplifying assumptions in the
following two subsections.

\subsection{Compactness, the \sui{} case}\label{SScompactsui}
In this subsection, we consider for simplicity
only sets of
graphons that are 
\sui, 
for example sets that are \ui{} or uniformly bounded.
Such sets are always \ucr, as said above, 
since we may take $V:=W\etta_{|W|\le B}$ in
\eqref{quince} for some large $B$.
Hence, \refT{TD} implies the following.

\begin{corollary}\label{CD}
  A \sui{} set $\cW$ of graphons 
is totally bounded for
  the cut metric if and only   if it has \ucrt.
\nopf
\end{corollary}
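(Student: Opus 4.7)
The plan is to derive this directly from \refT{TD}, using the equivalence \ref{TDtb}$\iff$\ref{TD2}. Since that theorem characterises totally bounded sets as precisely those that are both \ucr{} and have \ucrt, the task reduces to showing that under the \sui{} hypothesis, the \ucr{} condition is automatic; then the characterisation collapses to having \ucrt{} alone.

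To verify that a \sui{} set $\cW$ is \ucr, I would fix $\eps>0$ and, using condition \ref{wc2} applied on the product space (with $\qx\mu$), choose $B<\infty$ such that $\sup_{W\in\cW}\int_{\qx S}|W|\ett{|W|>B}\dd\qx\mu < \eps$. Then for each graphon $(W,S,\mu)\in\cW$ define
\begin{equation*}
V := W\etta_{\set{|W|\le B}},
\end{equation*}
which is a symmetric integrable function on $\qx S$, hence a graphon, with $|V|\le B$ pointwise, and
\begin{equation*}
\cn{W-V} \le \norml{W-V} = \int_{\qx S}|W|\ett{|W|>B}\dd\qx\mu < \eps
\end{equation*}
by \eqref{cutl1}. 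This confirms \eqref{quince}, so $\cW$ is \ucr.

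Combining this with \refT{TD}, if $\cW$ is \sui{} then it is totally bounded iff it is \ucr{} and has \ucrt, iff it has \ucrt. There is no real obstacle; all the substantive work lies in \refT{TD}, and the corollary is just the observation that the \ucr{} condition is redundant for \sui{} families because the truncation $V=W\ett{|W|\le B}$ already serves as an $L^1$-approximation (hence a fortiori a cut-norm approximation) by a uniformly bounded graphon supported on all of $\qx S$.
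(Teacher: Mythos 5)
Your proposal is correct and is essentially identical to the paper's own argument: the paper also notes that a \sui{} set is automatically \ucr{} via the truncation $V:=W\etta_{|W|\le B}$ (together with \eqref{cutl1}), and then reads off the corollary from the equivalence \ref{TDtb}$\iff$\ref{TD2} of \refT{TD}. Nothing is missing.
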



We combine \refC{CD} with results on completeness in \refS{Scomplete} to
obtain 
results on compactness. 

\begin{theorem}\label{TUC}
  A \ui{} set of graphons is relatively compact for the cut metric if and
  only if it has \ucrt.
\end{theorem}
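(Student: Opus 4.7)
The plan is to combine the completeness result \refT{TB}\ref{TBui} with the total boundedness characterisation from \refC{CD}, using the standard fact that a set in a metric space is relatively compact if and only if it is both relatively complete and totally bounded.

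For the forward implication, suppose $\cW$ is \ui{} and relatively compact for $\dcut$. Relative compactness implies total boundedness. Since every \ui{} set is \sui, \refC{CD} applies and yields that $\cW$ has \ucrt.

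For the reverse implication, suppose $\cW$ is \ui{} and has \ucrt. Then $\cW$ is \sui, so \refC{CD} gives that $\cW$ is totally bounded for $\dcut$. Moreover, \refT{TB}\ref{TBui} gives that $\cW$ is relatively complete for $\dcut$. The combination of relative completeness and total boundedness is exactly relative compactness, so $\cW$ is relatively compact.

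There is no real obstacle here: the theorem is a straightforward packaging of \refT{TB}\ref{TBui} with \refC{CD} via the elementary metric-space equivalence ``relatively compact $\iff$ relatively complete and totally bounded''. The only point that requires attention is to note that uniform integrability, being strictly stronger than semiuniform integrability, allows us to invoke \refC{CD} in both directions.
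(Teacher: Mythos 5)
Your proposal is correct and follows exactly the paper's own argument: the paper proves \refT{TUC} by citing \refT{TB}\ref{TBui} for relative completeness and \refC{CD} for total boundedness, combined with the standard equivalence that relative compactness is relative completeness plus total boundedness. Your observation that uniform integrability implies semiuniform integrability (so that \refC{CD} applies) is the same point the paper relies on implicitly.
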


\begin{proof}
  By \refT{TB}\ref{TBui} and \refC{CD}.
\end{proof}

\begin{theorem}\label{TUC+}
  A \sui{} set of non-negative graphons 
is relatively compact for the cut metric if and
  only if it has \ucrt.
\end{theorem}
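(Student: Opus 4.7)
The plan is to deduce \refT{TUC+} directly from the machinery already assembled, in complete parallel with the proof of \refT{TUC}. Recall that a set in a metric space is relatively compact if and only if it is both relatively complete and totally bounded, so it suffices to verify each of these two properties separately under the standing hypothesis that $\cW$ is a \sui{} set of non-negative graphons.

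For the sufficiency direction, suppose $\cW$ is \sui, consists of non-negative graphons, and has \ucrt. Relative completeness for $\dcut$ is provided by \refT{TB}\ref{TB+}, which is exactly tailored to this setting (non-negative, \sui). Total boundedness follows from \refC{CD}, which states that for \sui{} sets, \ucrt{} is equivalent to total boundedness for $\dcut$. Combining these two, $\cW$ is relatively compact.

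For the necessity direction, suppose $\cW$ is \sui, non-negative, and relatively compact for $\dcut$. Every relatively compact subset of a metric space is totally bounded, so $\cW$ is totally bounded for $\dcut$. Invoking again \refC{CD} in the other direction (still applicable since $\cW$ is \sui), total boundedness forces \ucrt.

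There is no genuine obstacle here: the only thing to check is that the hypotheses of \refT{TB}\ref{TB+} and of \refC{CD} are indeed met, which is immediate since \sui{} non-negative graphons are in particular \sui{} graphons (so \refC{CD} applies without change), and non-negativity is exactly the extra ingredient needed to upgrade the relative completeness of \refT{TB}\ref{TBui} from \ui{} to \sui{} as in \refT{TB}\ref{TB+}. In other words, the two halves of \refT{TUC+} are literally the two halves of \refT{TB}\ref{TB+} and \refC{CD} assembled through the standard characterisation of relative compactness.
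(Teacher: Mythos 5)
Your proposal is correct and follows essentially the same route as the paper, whose proof of \refT{TUC+} is precisely the combination of \refT{TB}\ref{TB+} (relative completeness for \sui{} non-negative graphons) with \refC{CD} (total boundedness $\iff$ \ucrt{} for \sui{} sets), assembled via the standard fact that relative compactness equals relative completeness plus total boundedness. Your additional remarks spelling out both directions explicitly are accurate but add nothing beyond the paper's one-line argument.
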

\begin{proof}
  By \refT{TB}\ref{TB+} and \refC{CD}.
\end{proof}

\citet[Theorem 2.12]{BCCH16} give a similar result 
(for uniformly bounded graphons)
using \urt{} instead of
\ucrt{}; we obtain a 
(reformulation of) their result as a corollary, where we
furthermore extend their result from 
uniformly bounded graphons to \sui{} graphons. 

\begin{theorem}[Mainly {\cite[Theorem 2.12]{BCCH16}}]\label{TC}
  Let $\cW$ be a \sui{} set of graphons.
(For example a uniformly bounded set, or a set bounded in $L^p$ for some $p>1$.)
\begin{romenumerate}
\item \label{TC1}
If $\cW$ has \urt, then  $\cW$ 
is relatively compact for the cut metric.
\item \label{TC2}
The converse holds if all graphons in $\cW$ are non-negative.
\end{romenumerate}
\end{theorem}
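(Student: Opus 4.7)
The plan is to deduce the theorem essentially as a corollary of Theorems \ref{TUC} and \ref{TUC+}, together with Lemma \ref{Lurt}. The only new observation needed is that semi-uniform integrability combined with \urt{} upgrades to full uniform integrability; once we have this, everything reduces to results already proved.

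For part \ref{TC1}, assume $\cW$ is \sui{} and has \urt. First I would verify that $\cW$ is uniformly integrable, i.e., that \ref{wc1} holds. Applying the definition of \urt{} with $\eps=1$, obtain $M<\infty$ such that for each $W\in\cW$ there is $U=U_W\subseteq S$ with $\mu(U)\le M$ and $\norml{W-W\etta_{\qx U}}<1$. Pick any $B<\infty$, say $B=1$, and split
\begin{equation*}
\int_{\qx U}|W|
=\int_{\qx U\cap\set{|W|\le B}}|W|
+\int_{\qx U\cap\set{|W|>B}}|W|
\le B\mu(U)^2+\int_{\set{|W|>B}}|W|
\le BM^2+C,
\end{equation*}
where $C<\infty$ is given by \ref{wc2}. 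Hence $\norml W\le 1+BM^2+C$, uniformly in $W\in\cW$, so \ref{wc1} holds and $\cW$ is uniformly integrable. By \refL{Lurt}\ref{Lurt1}, \urt{} implies \ucrt, so \refT{TUC} yields that $\cW$ is relatively compact for the cut metric.

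For part \ref{TC2}, assume in addition that every graphon in $\cW$ is non-negative, and that $\cW$ is relatively compact. Then \refT{TUC+} gives that $\cW$ has \ucrt, and \refL{Lurt}\ref{Lurt2} converts \ucrt{} back to \urt{} in the non-negative setting. The parenthetical examples are immediate: a uniformly bounded set satisfies \ref{wc2} trivially (choose any $B$ at least as large as the uniform bound), and an $L^p$-bounded set with $p>1$ satisfies \ref{wc2} via the standard estimate $\int_{|W|>B}|W|\le B^{1-p}\normlp W^p$.

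The main (and essentially only) obstacle is the implication sui${}+{}$\urt${}\implies{}$ui used in part \ref{TC1}, but this is settled by the elementary splitting above; all substantive work is already carried by Theorems \ref{TUC} and \ref{TUC+} and by \refL{Lurt}.
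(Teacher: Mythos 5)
Your proof is correct and follows essentially the same route as the paper: the key observation that a \sui{} set with \urt{} is \ui{} is exactly the paper's \refL{LX}, proved there by the same splitting of $\int_{\qx U}|W|$, after which part \ref{TC1} is \refL{Lurt}\ref{Lurt1} combined with \refT{TUC}, and part \ref{TC2} follows from \ucrt{} (obtained via \refT{TUC+} in your version, via \refT{TD} in the paper's --- the same thing) together with \refL{Lurt}\ref{Lurt2}. The one slip is the phrase ``pick any $B<\infty$, say $B=1$'': condition \ref{wc2} is only an asymptotic statement as $B\to\infty$ and does not bound $\sup_{W}\int_{|W|>1}|W|$ for a fixed small $B$ (for instance, $W_n=\tfrac32\etta_{\qx{[0,n]}}$ on $\bbRp$ satisfies \ref{wc2} while $\int_{|W_n|>1}|W_n|\to\infty$), so the constant $C$ in your estimate need not be finite for that choice. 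The fix is immediate and is what the paper does in \refL{LX}: choose $B$ large enough that $\sup_{W\in\cW}\int_{|W|>B}|W|\le1$; the rest of your computation then goes through verbatim.
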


We prove first a simple lemma.

\begin{lemma}
  \label{LX}
A set of graphons that is \sui{} and has \urt{} is \ui.
\end{lemma}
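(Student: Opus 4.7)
The plan is to show that a \sui{} set $\cW$ with \urt{} automatically satisfies condition \ref{wc1}, i.e., $\sup_{W\in\cW}\norml{W}<\infty$; then \ref{wc2} (which is exactly \sui) combines with this to give \ui.

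First I would apply \urt{} with $\eps=1$ to produce a constant $M<\infty$ such that every $W\in\cW$ on its space $(S,\mu)$ admits a set $U=U_W\subseteq S$ with $\mu(U)\le M$ and $\norml{W-W\etta_{\qx U}}<1$. By the triangle inequality this gives
\begin{equation*}
\norml{W}\le \norml{W\etta_{\qx U}}+1,
\end{equation*}
so it suffices to bound $\norml{W\etta_{\qx U}}=\int_{U\times U}|W|\dd\qx\mu$ uniformly in $W\in\cW$.

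Next I would apply \sui{} (\ie, condition \ref{wc2}) with tolerance $1$ to obtain $B<\infty$ such that $\int_{|W|>B}|W|\dd\qx\mu<1$ for every $W\in\cW$. Splitting the integral over $U\times U$ according to whether $|W|\le B$ or $|W|>B$, and using that $\qx\mu(U\times U)\le M^2$, gives
\begin{equation*}
\int_{U\times U}|W|\dd\qx\mu
\le B\cdot\qx\mu(U\times U)+\int_{|W|>B}|W|\dd\qx\mu
\le BM^2+1.
\end{equation*}
Combining, $\norml{W}\le BM^2+2$ uniformly, which is \ref{wc1}; together with the hypothesis \ref{wc2} this establishes \ui.

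There is no real obstacle here: the only thing to keep in mind is that \urt{} restricts support on the base space $S$ (of measure $\le M$), while the $L^1$ norm lives on $S\times S$, so one must use the product measure bound $M^2$ rather than $M$. The argument is otherwise a one-line application of the two hypotheses in succession.
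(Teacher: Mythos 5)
Your argument is correct and is essentially identical to the paper's proof: both apply \urt{} with $\eps=1$ to localize the mass to $U\times U$ with $\mu(U)\le M$, apply \sui{} to control the part where $|W|>B$, and bound the rest by $BM^2$, yielding the same uniform $L^1$ bound (up to writing the first step as a triangle inequality rather than an exact decomposition into disjoint supports).
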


\begin{proof}
  Denote the set by $\cW$.
By the definition \ref{wc2} of \sui, there exists $B<\infty$ such that
$\int_{|W|>B}|W|<1$ for every $W\in\cW$.
Furthermore, take $\eps=1$ in the definition of \urt. Thus, there exists
$M<\infty$ such that if $W\in\cW$ is defined on $(S,\mu)$, then there exists
$U\subseteq S$ with $\mu(U)\le M$ and $\norml{W-W\ettaqx{U}}<1$.
Consequently,
\begin{equation}
  \begin{split}
	\norml{W}&
=\norml{W-W\ettaqx{U}}+\int_{\qx U}|W|
\\&
\le 1+\int_{|W|>B}|W|+\int_{\qx U}B
\le 1+1+M^2B.
  \end{split}
\raisetag\baselineskip
\end{equation}
Hence, \ref{wc1} holds, so $\cW$ is \ui.
\end{proof}

\begin{proof}[Proof of \refT{TC}]

\pfitemref{TC1}
By \refL{LX}, $\cW$ is \ui, and $\cW$ has \ucrt, so $\cW$ is relatively
compact by \refT{TUC}.

\pfitemref{TC2}
If $\cW$ is relatively compact, then it is totally bounded,
and \refT{TD} shows that it has \ucrt, and
the result follows by \refL{Lurt}\ref{Lurt2}.
\end{proof}

\refE{EEE} below shows that a \sui{} set of graphons with \ucrt{} does not
have to be relatively compact. Note that Theorems \ref{TUC}, \ref{TUC+} and
\ref{TC}\ref{TC1} strengthen the assumption in three different ways (\ui,
non-negative and \urt, respectively), and that we thus need these stronger
assumptions.

Furthermore, 
\refE{EF} shows that uniform integrability is not enough to imply
relative compactness, even for \oi-valued graphons; hence the first
condition in Theorems \ref{TUC}--\ref{TC} does not suffice to imply the other
conditions.

\refE{E-urt} shows that a compact set of signed graphons does
not have to have \urt, so \refT{TC}\ref{TC2} does not hold without assuming
non-negativity.

\begin{example}[A \sui{} set with \ucrt{} that is not relatively compact]
\label{EEE}
  The sequence $(W_n)$ in \refE{EA3} is a Cauchy sequence for the cut
  metric, and thus it is totally bounded; 
hence it has \ucrt{} by \refT{TD}.
Furthermore, the sequence is
  uniformly bounded and is thus \sui. Nevertheless, the sequence does not
  converge, so there can be no convergent subsequence and thus the \set{W_n}
  is not relatively compact.

Thus, we cannot replace \ui{} by \sui{} in \refT{TUC}, 
non-negative by arbitrary (signed) in \refT{TUC+}, or
\urt{} by \ucrt{} in \refT{TC}.
\end{example}

\begin{example}[A \ui{} sequence that is not relatively compact]
  \label{EF}
Let $W_n$ be the graphon on $\bbRp$ given by  $W_n:=n\qww\ettaqx{[0,n]}$.
The graphons $W_n$ are \oi-valued and thus uniformly bounded; furthermore,
$\norml{W_n}=1$ so the set $\set{W_n}$ is also $L^1$-bounded and thus \ui.

However, $\norm{W_n}_\infty\to0$ as \ntoo, and it follows by \refR{RLgd}
that if $\dcut(W_n,W)\to0$ for some subsequence, then $\norm{W}_\infty=0$,
so $W=0$ \aex; however, this is impossible since $\dcut(W_n,0)=\int W_n=1$.
Henc, no subsequence converges, and thus \set{W_n} is not relatively
compact.

By \refT{TD}, \set{W_n} cannot have \ucrt.
\end{example}

\begin{example}[convergence does not imply \urt{} without non-negativity]
  \label{E-urt}
Let $V_n$ be as in \refE{EA3} and both stretch and rescale them to
$W_n:=n\qw\strp{V_n}n$.
Then, see \eqref{edmund}, 
\begin{equation}\label{sycorax}
  \norml{W_n}=n\qw\norml{\strp{V_n}n}=\norml{V_n}=1,
\end{equation}
while
\begin{equation}\label{caliban}
  \cn{W_n}=n\qw\cn{\strp{V_n}n}=\cn{V_n}<2^{-n}.
\end{equation}
By \eqref{caliban}, $W_n\to0$ in cut norm and thus in cut metric; hence the
set \set{W_n} is relatively compact for the cut metric, and
$\set{W_n}\cup\set0$
is compact.

It follows from \refT{TD} (or directly from the definition) 
that the graphons $W_n$ have \ucrt.

However, $W_n$ do not have \urt.
In fact, $|W_n|\le n\qw$,
and thus for any $M$
and any set $U$ with $\leb(U)\le M$,
\begin{equation}
\norml{W_n\etta_{\qx U}}\le n\qw \leb^2(\qx U)\le M^2/n;  
\end{equation}
hence, by \eqref{sycorax}, 
\begin{equation}\label{miranda}
  \norml{W_n-W_n\etta_{\qx U}}\ge 1-M^2/n.
\end{equation}
Since $1-M^2/n\ge\frac12$ for all large $n$, \eqref{miranda} shows that
the graphons do not have \urt.

Hence, \urt{} is not necessary for compactness.

Note also that $|W_n|\le n\qw\le1$, and thus the graphons $W_n$ are all
uniformly bounded. By this and  \eqref{sycorax}, they are also \ui.
\end{example}

\subsection{Compactness, the standard case of \ps{s}}
Finally,
we consider the standard setting of graphons defined on probability spaces.
(Or, equivalently, graphons defined on \oi.)
In this setting,
it is well-known, and of fundamental importance, that the set of all
$\oi$-valued graphons is compact, as proved by \citet{LSz:Sz}.
This was extended to $L^p$-bounded and \ui{} sets of graphons on probability
spaces by 
\citet[Theorems 2.13 and  C.7]{BCCZ14a}.
We recover these results as  corollaries.

Note first that  a graphon $W$ defined on a \ps{} 
obviously is
also equivalent to graphons defined on other spaces; one example is  a trivial
extension of $W$, and
the following lemma, the proof of which is postponed to the next section,
shows that this is essentially the only possibility.

\begin{lemma}
  \label{Lps}
Let $W=(W,S,\mu)$ be a graphon.
Then the following are equivalent.
\begin{romenumerate}
\item\label{Lps=} 
$W\cong W'$ for some graphon $W'$ defined on a \ps.
\item \label{Lpstr}
$W$ is \aex{} equal to
a trivial extension of a graphon defined on a measure space
  $(S',\mu')$ with $\mu'(S')\le1$.
\item \label{LpsU}
There exists a set $U\subseteq S$ with $\mu(U)\le1$ such that $W(x,y)=0$
\aex{} on $\qxp{S}\setminus\qxp U$.
\item \label{Lpsf}
There exists a function $f:S\to\oi$ with $\int f\dd\mu\le1$ such that
\\
$f(x)f(y)W(x,y)=W(x,y)$ a.e.
\end{romenumerate}
\end{lemma}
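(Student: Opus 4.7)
The plan is to prove (iii)$\iff$(iv) and (iii)$\iff$(ii) by unwinding definitions, (ii)$\Rightarrow$(i) by a one-point extension, and to close the loop via (i)$\Rightarrow$(iii), which is the only substantive implication. For (iii)$\iff$(iv), given $U$ take $f:=\ett{U}$; conversely, given $f$, set $U:=\{f=1\}$, so $\mu(U)\le\int f\,d\mu\le 1$, and since $f(x)f(y)\le 1$ the identity in (iv) forces $f(x)=f(y)=1$ wherever $W(x,y)\neq0$, so $W$ vanishes a.e.\ off $U^2$. For (iii)$\iff$(ii), given (iii) let $W''$ be the restriction of $W$ to $(U,\mu|_U)$; then $W$ agrees a.e.\ with the trivial extension of $W''$. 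Conversely, (ii) yields (iii) with $U=S'$. For (ii)$\Rightarrow$(i), extend $(W',S',\mu')$ trivially by adjoining a single atom of mass $1-\mu'(S')\ge 0$ on which the graphon is $0$; this produces an equivalent graphon on a probability space.

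For the implication (i)$\Rightarrow$(iii) I plan to introduce the equivalence-class invariant
\begin{equation*}
m(V):=\nu\bigpar{\{x\in T:D_V(x)>0\}},\qquad D_V(x):=\int_T|V(x,y)|\,d\nu(y),
\end{equation*}
defined for any graphon $(V,T,\nu)$ ($D_V$ is well defined $\nu$-a.e.\ by Fubini). By \refT{T=}, $W\cong W'$ means $W$ and $W'$ are connected by a finite chain of elementarily equivalent graphons in the sense of \refD{Dee}, so it suffices to show $m$ is preserved by each of the two cases there. Under a pull-back $V_1=V_2^{\gf}$ with $\gf$ \mpp, change of variables gives $D_{V_1}=D_{V_2}\circ\gf$ a.e., whence $\{D_{V_1}>0\}=\gf\qw\{D_{V_2}>0\}$ has the same measure as $\{D_{V_2}>0\}$. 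Under a trivial extension $\tV$ of $V$ from $S$ to $\tS$, a direct computation yields $D_{\tV}=D_V$ on $S$ and $D_{\tV}=0$ on $\tS\setminus S$, so $\{D_{\tV}>0\}=\{D_V>0\}$ has the same measure. If now $W\cong W'$ with $W'$ on a probability space, then $m(W)=m(W')\le\mu'(S')\le 1$, and $U:=\{D_W>0\}\subseteq S$ witnesses (iii): for $x\notin U$ we have $W(x,\cdot)=0$ $\mu$-a.e., and by symmetry $W(\cdot,y)=0$ $\mu$-a.e.\ whenever $y\notin U$, so by Fubini $W=0$ a.e.\ on $\qx S\setminus\qx U$.

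The main obstacle one might fear is a direct attack on (i)$\Rightarrow$(iii) via a coupling (for instance through \refT{T=0}), trying to push a support set of measure $\le 1$ from $W'$ back to $W$. This gets awkward because in a coupling $(\gf_1,\gf_2)$ a single point of the common space can project simultaneously into the support of $\tW'$ and into its complement via $\gf_1$, so the notion of ``support on each side'' of the coupling does not admit a clean descent to $S$. Passing instead through the intrinsic numerical invariant $m$, which is read off each graphon from its marginal alone, bypasses the need to track supports across couplings.
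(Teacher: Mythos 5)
Your proof is correct, and for the one substantive implication it takes a genuinely different route from the paper. The paper disposes of (ii)$\iff$(iii)$\iff$(iv) and (ii)$\Rightarrow$(i) exactly as you do, but proves (i)$\Rightarrow$(iv) by rerunning the proof of \refL{LQ3} with $M=1$ and ``$<\eps$'' replaced by ``$=0$'': after reducing to elementarily equivalent pairs via \refT{T=}, the hard direction (descending the function $f$ from a pull-back $W\qgf$ to $W$ itself) is handled with a conditional expectation $f'=\E(f\mid\gf\qw(\cF_2))$, the identity $\E\bigpar{(f\tensor f)W\qgf\mid\qx{\cF'}}=(f'\tensor f')W\qgf$, and the fact that conditional expectation does not increase the cut norm; one then concludes via \eqref{cawdor}. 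You instead introduce the numerical invariant $m(V)=\nu\{D_V>0\}$, verify by direct change of variables and Fubini that it is preserved by both types of elementary equivalence in \refD{Dee}, and read off $U=\{D_W>0\}$ with $\mu(U)=m(W)=m(W')\le 1$. Your reduction to elementary equivalences rests on the same \refT{T=} as the paper's, so the two arguments share their skeleton, but yours replaces the conditional-expectation step by an entirely elementary computation with the degree function --- the same device the paper itself uses in \refE{Enot} and in the spirit of the invariants of \refT{TC=} and \refE{E=}. What the paper's route buys is economy: \refL{LQ3} is already needed for \refT{TD}, so the implication comes for free; what yours buys is self-containedness and a canonical witness for $U$ (the essential support of the degree function) rather than an abstractly produced $f$. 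The only point worth stating explicitly in a write-up is the measurability and a.e.\ finiteness of $D_W$ (Fubini for \gsf{} measures), which you do note.
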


Furthermore,
for two graphons $W_1$ and $W_2$ defined on \ps{s},
the definition by \cite{BCCH16}, see \refSS{SSdcut}, 
of the cut distance $\dcut(W_1,W_2)$
is the same 
as the usual definition for \ps{s} in \eg{} 
\cite{BR09,BCCZ14a,BCLSV1,SJ249,Lovasz}. 
Moreover, the next lemma shows that when considering limits of sequences of
graphons
on \ps{s}, it does not matter whether we require also the limit to be
defined on a \ps{} or allow it to be defined on an arbitrary \gsf{} measure
space.
In particular,  completeness and compactness properties of a set $\cW$ of
graphons on \ps{s} do not depend on whether we consider $\cW$ as a subset of
the set of all such graphons, or of all graphons on \gsf{} measure spaces
(We are more careful than usually in the statement and talk explicitly about
equivalence classes, since as
just noted, a graphon  on a \ps{} is equivalent to  graphons on other
measure spaces.)

\begin{lemma}
  \label{Loi2}
If $W_n$ are graphons defined on \ps{s}, and $W$ is a graphon such that
$W_n\to W$ in the cut metric, then there exists an equivalent graphon
$W'\cong W$ that is defined on a \ps.
In other words, for the cut metric,
  the set of equivalence classes of 
graphons defined on probability spaces is a closed subset of the set of
equivalence classes  of all graphons defined on \gsf{} measure spaces.
\end{lemma}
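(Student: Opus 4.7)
The plan is to verify the criterion in \refL{Lps}\ref{Lpsf} for $W$: produce a measurable $f:\bbRp\to\oi$ with $\int f\dd\leb\le 1$ and $f(x)f(y)W(x,y)=W(x,y)$ a.e. By \refP{P2.8}, I may assume that $W$ and each $W_n$ are defined on $(\bbRp,\leb)$. Since each $W_n$ is equivalent to a graphon on a probability space, \refL{Lps}\ref{LpsU} gives $U_n\subseteq\bbRp$ with $\leb(U_n)\le 1$ such that $W_n$ vanishes off $\qx{U_n}$. By \refP{P4.3c} there exist \mpb{s} $\gf_n$ of $\bbRp$ with $\cn{W_n-W\qphi{n}}\to 0$; pulling back through $\gf_n\qw$ and using \eqref{dunsinane} I obtain $V_n:=W_n^{\gf_n\qw}$, supported on $\qx{\gf_n(U_n)}$, with $\leb(\gf_n(U_n))\le 1$ and $\cn{V_n-W}\to 0$. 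Let $f_n:=\etta_{\gf_n(U_n)}$; then $0\le f_n\le 1$, $\int f_n\dd\leb\le 1$, and $(1-f_n(x)f_n(y))V_n(x,y)\equiv 0$.

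Since $L^1(\bbRp)$ is separable, the closed unit ball of $L^\infty(\bbRp)$ is weak${}^*$ sequentially compact, so, passing to a subsequence, $f_n\to f$ weak${}^*$ for some $0\le f\le 1$; testing against $\etta_{[0,M]}$ for each $M$ yields $\int f\dd\leb\le 1$. From the vanishing of $(1-f_n(x)f_n(y))V_n$ and two applications of \eqref{malcolm} to the decomposition $1-f_n(x)f_n(y)=1\cdot 1-f_n(x)f_n(y)$,
\begin{equation}
\cn{(1-f_n(x)f_n(y))W(x,y)}=\cn{(1-f_n(x)f_n(y))(W-V_n)(x,y)}\le 2\cn{W-V_n}\to 0.
\end{equation}
Hence for any measurable $T,U\subseteq\bbRp$ of finite measure, $\int_{T\times U}f_n(x)f_n(y)W(x,y)\dd\leb(x)\dd\leb(y)\to\int_{T\times U}W(x,y)\dd\leb(x)\dd\leb(y)$.

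The crux is to show the same limit equals $\int_{T\times U}f(x)f(y)W(x,y)\dd\leb(x)\dd\leb(y)$. I decompose $f_n(x)f_n(y)-f(x)f(y)=f_n(x)(f_n(y)-f(y))+(f_n(x)-f(x))f(y)$. The second summand integrated against $W$ over $T\times U$ tends to $0$ by weak${}^*$ convergence of $f_n$ against the fixed function $x\mapsto\etta_T(x)\int_U f(y)W(x,y)\dd\leb(y)$, which lies in $L^1(\bbRp)$ by Fubini and $\int_{T\times U}|W|<\infty$. For the first summand, integrate $x$ first to get $\beta_n(y):=\etta_U(y)\int_T f_n(x)W(x,y)\dd\leb(x)$; weak${}^*$ convergence yields $\beta_n(y)\to\beta(y)$ pointwise a.e., dominated convergence with the integrable majorant $y\mapsto\etta_U(y)\int_T|W(x,y)|\dd\leb(x)$ promotes this to $\beta_n\to\beta$ in $L^1(\bbRp)$, and a final application of weak${}^*$ convergence then gives $\int(f_n-f)\beta_n\dd\leb\to 0$. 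Combining, $\int_{T\times U}(1-f(x)f(y))W(x,y)\dd\leb(x)\dd\leb(y)=0$ for all finite-measure $T,U$; since the rectangles of finite measure form a $\pi$-system generating the Borel $\sigma$-algebra on each $\qx{[0,M]}$, a standard monotone-class argument upgrades this to $(1-f(x)f(y))W(x,y)=0$ a.e. on $\qxq{\bbRp}$, and \refL{Lps}\ref{Lpsf} produces the desired $W'\cong W$ on a probability space.

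The principal difficulty lies in moving the weak${}^*$ limit across the nonlinear product $f_n(x)f_n(y)$ against the unbounded kernel $W$; what rescues the argument is the integrability of $W$ on the finite-measure rectangle $T\times U$, which lets dominated convergence handle one variable at a time before the remaining weak${}^*$ limit is taken.
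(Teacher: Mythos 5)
Your proof is correct and follows essentially the same route as the paper: reduce to $\bbRp$ via Propositions \ref{P2.8} and \ref{P4.3c}, use \refL{Lps} to get supports of measure at most $1$, extract a \weakx{} limit $f$ of the indicator functions in the unit ball of $L^\infty(\bbRp)$, show $(f\tensor f)W=W$ a.e., and conclude by \refL{Lps}\ref{Lpsf}. The only (minor) difference is that where the paper deduces $\etta_{U_n}\tensor\etta_{U_n}\wxto f\tensor f$ from the density of $L^1(\bbRp)\tensor L^1(\bbRp)$ in $L^1(\bbRp^2)$, you verify the same convergence directly by a Fubini and dominated-convergence argument one variable at a time.
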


We postpone the proof of this lemma too to next section.

We record also a trivial fact. 
\begin{lemma}
  \label{Loi}
  Any set of graphons defined on probability spaces has \urt, and thus \ucrt.
\end{lemma}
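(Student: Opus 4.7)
The plan is essentially immediate from the definitions. For any graphon $W$ defined on a probability space $(S,\cF,\mu)$, one has $\mu(S)=1$, so we may simply take $U:=S$ in the definition of \urt. Then $\mu(U)=1$ and $W\etta_{\qx U}=W$ \aex, hence $\norml{W-W\etta_{\qx U}}=0$. Thus, given any $\eps>0$, the choice $M:=1$ (independent of $\eps$) witnesses \urt{} for the entire set $\cW$ simultaneously, since the single choice $U=S$ works for every $W\in\cW$.

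The final clause, that \ucrt{} holds as well, is then immediate from \refL{Lurt}\ref{Lurt1}. There is no genuine obstacle here; the lemma simply records the observation that the finiteness (in fact, unit bound) of the total measure in the probability space setting automatically yields both tail conditions in a trivial way, which is precisely why the sharper tail hypotheses of Theorems \ref{TUC}--\ref{TC} are vacuous in the classical setting of graphons on \ps{s}.
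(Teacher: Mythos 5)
Your proof is correct and coincides with the paper's own one-line argument: take $M=1$ and $U=S$, so that $W\etta_{\qx U}=W$ and the $L^1$ (hence cut-norm) difference vanishes, with \ucrt{} following from \refL{Lurt}\ref{Lurt1}. Nothing further is needed.
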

\begin{proof}
  Take $M=1$ and $U=S$ in the definition.
\end{proof}

We return to compactness properties.
\begin{theorem}
  \label{Toi}
Let $\cW$ be  a set of graphons defined on probability spaces.
Then $\cW$ is totally bounded for the cut metric if and only if it is \ucr.

Hence, $\cW$ is relatively compact if and only if it is \ucr{} and
relatively complete.
\end{theorem}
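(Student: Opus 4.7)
The plan is to derive \refT{Toi} as essentially a direct corollary of \refT{TD}, using the observation from \refL{Loi} that the tail condition \ucrt{} comes for free when the graphons live on probability spaces.

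First I would prove the equivalence "totally bounded iff \ucr". For the forward direction, assume $\cW$ is totally bounded for $\dcut$. Then \refT{TD} (the implication \ref{TDtb}$\implies$\ref{TD2}) gives that $\cW$ is \ucr{} and has \ucrt; in particular $\cW$ is \ucr. For the converse direction, assume $\cW$ is \ucr. Since every graphon in $\cW$ is defined on a probability space, \refL{Loi} gives that $\cW$ has \urt, and hence by \refL{Lurt}\ref{Lurt1} also \ucrt. Both conditions of \refT{TD}\ref{TD2} are thus satisfied, so the implication \ref{TD2}$\implies$\ref{TDtb} of \refT{TD} shows $\cW$ is totally bounded.

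For the second statement, I would use the standard metric-space fact that a set $\cW$ is relatively compact if and only if it is totally bounded and relatively complete (recall from \refS{Scomplete} that $\cW$ is relatively complete exactly when $\overline{\cW}$ is complete, and relative compactness is equivalent to the closure being compact, hence to the closure being complete and totally bounded; moreover total boundedness passes to the closure). Substituting the equivalence just proved for total boundedness gives the claimed characterisation.

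There is no serious obstacle: once \refT{TD} and \refL{Loi} are in hand, the result is immediate. The one mild subtlety to note is that the closure of $\cW$ in the space of equivalence classes of graphons is taken inside the full space of graphons on \gsf{} measure spaces, not just those on probability spaces; however \refL{Loi2} ensures that any graphon in this closure is itself equivalent to a graphon on a probability space, so the statement is unambiguous and no issue arises from working inside the larger ambient metric space.
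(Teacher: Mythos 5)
Your proof is correct and follows exactly the paper's route: the paper's entire proof is ``By Lemma \ref{Loi} and Theorem \ref{TD}'', combined with the standard fact (stated just before Theorem \ref{TD}) that relative compactness is equivalent to total boundedness plus relative completeness. Your extra remark invoking Lemma \ref{Loi2} to handle the ambient space of the closure is a sensible clarification but not needed for the statement as given.
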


\begin{proof}
By \refL{Loi} and  \refT{TD}.
\end{proof}

\begin{corollary}[{\cite[Theorem C.7]{BCCZ14a}}]\label{CC}
 A \ui{} set of graphons defined on probability spaces
is relatively compact for the cut metric.
\end{corollary}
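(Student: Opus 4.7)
The plan is to derive this as a straightforward combination of previously established results. By \refT{Toi}, a set $\cW$ of graphons on probability spaces is relatively compact for the cut metric if and only if it is \ucr{} and relatively complete. So I need to verify both conditions under the assumption that $\cW$ is \ui.

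First I would verify relative completeness. This is immediate: by \refT{TB}\ref{TBui}, any \ui{} set of graphons is relatively complete for the cut metric, regardless of whether the graphons live on probability spaces or on general $\sigma$-finite measure spaces. (The fact that the limit might a priori be defined on a larger measure space causes no issue for the statement of relative completeness, and anyway by \refL{Loi2} any such limit of graphons on probability spaces is equivalent to a graphon defined on a probability space.)

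Next I would verify that $\cW$ is \ucr. Since $\cW$ is \ui, it satisfies \ref{wc2}, \ie{} it is \sui. As noted in the paragraph following the definition of \ucr, taking $V := W\etta_{|W|\le B}$ gives a bounded graphon with
\begin{equation}
\cn{W-V}\le\norml{W-V}=\int_{|W|>B}|W|\dd\mu^2,
\end{equation}
and by \sui{} the right-hand side can be made smaller than any prescribed $\eps>0$ uniformly over $W\in\cW$ by choosing $B$ large enough. Hence $\cW$ is \ucr.

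Combining the two conclusions with \refT{Toi} yields that $\cW$ is relatively compact for the cut metric. There is no real obstacle here since all the work has been done in Theorems \ref{TB} and \ref{Toi}; the only thing to observe is the elementary passage from \ui{} (or even just \sui) to \ucr{} via the truncation $V = W\etta_{|W|\le B}$.
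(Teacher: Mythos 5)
Your proposal is correct and follows essentially the same route as the paper, which likewise combines \refT{Toi} with the completeness result and the observation (already made in \refS{SScompactsui}) that a \ui{} (indeed \sui) set is \ucr{} via the truncation $V=W\etta_{|W|\le B}$. The only cosmetic difference is that you invoke \refT{TB}\ref{TBui} for relative completeness where the paper cites \refT{TXA} (from which \refT{TB}\ref{TBui} is derived); the paper also notes an equally short alternative via \refL{Loi} and \refT{TUC}.
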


\begin{proof}
A \ui{} set is \ucr, as said in \refS{SScompactsui}, so the result follows
by Theorems \ref{Toi} and \ref{TXA}.
(Or by \refL{Loi} and \refT{TUC}.)
\end{proof}
  
\begin{corollary}[{\cite[Theorem 2.13]{BCCZ14a}}]\label{CCp}
Let $1<p<\infty$ and $C<\infty$.
Then 
the set of all graphons
$W$ defined on probability spaces such that $\normlp{W}\le C$
is compact for the cut metric.
\end{corollary}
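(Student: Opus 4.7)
The plan is to combine relative compactness (from \refC{CC}) with closedness of the class $\cW_p(C):=\set{W\text{ defined on a \ps}:\normlp W\le C}$ under cut-metric limits.

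First, I would verify that $\cW_p(C)$ is uniformly integrable. Since every $W\in\cW_p(C)$ lives on a probability space and $p>1$, \Holder's inequality gives $\norml W\le\normlp W\le C$, so \ref{wc1} holds. For \ref{wc2}, observe that for any $B>0$ and any $W\in\cW_p(C)$,
\begin{equation}
\int_{|W|>B}|W|\dd\qx\mu\le B^{1-p}\normlp W^p\le B^{1-p}C^p,
\end{equation}
which tends to $0$ as $B\to\infty$, uniformly over $\cW_p(C)$. Hence \refC{CC} applies and yields that $\cW_p(C)$ is relatively compact for the cut metric.

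Second, to upgrade relative compactness to compactness, I would show $\cW_p(C)$ is closed (in the set of equivalence classes). Suppose $W_n\in\cW_p(C)$ and $\dcut(W_n,W)\to0$ for some graphon $W$ (a priori defined on a \gsf{} measure space). Two facts combine:
\begin{romenumerate}
\item By \refL{Loi2}, $W$ is equivalent to a graphon $W'$ defined on a \ps.
\item By \refL{LLgd}\ref{LLgdp}, together with \refE{E=}\ref{E=Lp} that $\normlp\cdot$ is invariant under equivalence, $\normlp{W'}=\normlp W\le\sup_n\normlp{W_n}\le C$.
\end{romenumerate}
Thus $W'\in\cW_p(C)$, so the equivalence class of $W$ lies in $\cW_p(C)$. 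Hence $\cW_p(C)$ is closed and relatively compact, and therefore compact.

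The main substantive ingredients are already in place: the uniform integrability estimate is elementary, and the nontrivial inputs (\refC{CC} for relative compactness and \refL{Loi2} for the limit staying on a \ps) have been proved earlier. The only point that requires a little care is ensuring that the limit's $L^p$-norm is controlled; this is handled by \refL{LLgd}\ref{LLgdp} applied to the equivalent graphon $W'$ on a \ps, together with the fact that the $L^p$-norm is an invariant of the equivalence class.
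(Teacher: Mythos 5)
Your proof is correct and follows essentially the same route as the paper: uniform integrability of the $L^p$-ball on probability spaces (via \Holder) gives relative compactness through \refC{CC}, and closedness is obtained from \refL{Loi2} together with \refL{LLgd}\ref{LLgdp}. The paper's proof is just a terser version of the same argument, citing the uniform integrability remark from \refS{Scomplete} and concluding with the same two lemmas.
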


\begin{proof}
Denote this set by $\cW$.
As remarked in \refS{Scomplete},
  the set $\cW$ is \ui, and thus it is
relatively compact by \refC{CC}, so it remains
  only to show that $\cW$ is closed, \ie, that if $W_n$ is a sequence of
  graphons in $\cW$ and $W_n\to W$ in cut metric, then $W'\in\cW$ for some
  $W'\cong W$.
This follows by Lemmas \ref{LLgd} and \ref{Loi2}.
\end{proof}

\begin{remark}
  \citet[Section 2.5]{BCCZ14a} give a definition of 
\emph{$L^p$ upper regular} sequences of graphons (defined on \oi), similar
to their definition of uniformly upper regular sequences mentioned in
\refR{Ruur} above.
They use this in criteria for sequential compactness with a limit in $L^p$.
As in \refR{Ruur}, we leave it as an open problem to investigate the
relation with our notions and results.
\end{remark}

\section{Proofs of \refT{TD} and Lemmas \ref{Lps}--\ref{Loi2}}\label{ScompactPf}

We first prove a couple of technical lemmas. 
The first yields alternative (but equivalent) characterisations of the
properties uniformly [cut] regular tails.

\begin{lemma}\label{LQ2}
  \begin{thmenumerate}
	\item \label{LQ2r}
A set $\cW$ of graphons has \urt{} if
for every $\eps>0$, there exists $M<\infty$ such that for every graphon
$(W,S,\mu)\in\cW$, there exists a 
measurable function $f:S\to\oi$ such that $\int f\dd\mu\le M$
and 
\begin{equation}
  \label{aufidius}
\norml{W-(f\tensor f)W}<\eps.
\end{equation}
\item \label{LQ2cr}
A set $\cW$ of graphons has \ucrt{} if
for every $\eps>0$, there exists $M<\infty$ such that for every graphon
$(W,S,\mu)\in\cW$, there exists a 
measurable function $f:S\to\oi$ such that $\int f\dd\mu\le M$
and 
\begin{equation}
  \label{coriolanus}
\cn{W-(f\tensor f)W}<\eps.
\end{equation}
  \end{thmenumerate}
\end{lemma}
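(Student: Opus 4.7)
In both parts I would set $U:=\{x\in S:f(x)>1/2\}$. By Markov's inequality $\mu(U)\le 2\int f\dd\mu\le 2M$, so $\mu(U)$ is uniformly bounded across $\cW$, and the point of the proof is to transfer the control on $W-(f\tensor f)W$ (in the cut norm or $L^1$) to control on $W-W\etta_{U\times U}$ in the same norm.

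For part (i), the key observation is a pointwise estimate. On $(U\times U)^c$ at least one of $f(x),f(y)$ is $\le 1/2$, hence $f(x)f(y)\le 1/2$ and so $1-f(x)f(y)\ge\tfrac12$; this gives
\begin{equation*}
\etta_{(U\times U)^c}(x,y)\;\le\;2\bigl(1-f(x)f(y)\bigr),
\end{equation*}
which trivially also holds on $U\times U$ since the left side vanishes. Multiplying by $|W|$ and integrating,
\begin{equation*}
\norml{W-W\etta_{U\times U}}=\norml{W\etta_{(U\times U)^c}}\le 2\norml{W-(f\tensor f)W}<2\eps,
\end{equation*}
and so $\cW$ satisfies the definition of \urt{} with parameters $(2M,2\eps)$, which verifies \urt.

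For part (ii) I would use the same $U$ and decompose
\begin{equation*}
W-W\etta_{U\times U}=\bigl(W-(f\tensor f)W\bigr)+W\bigl(f\tensor f-\etta_U\tensor\etta_U\bigr).
\end{equation*}
The first summand has cut norm $<\eps$ by hypothesis. Writing $f=\etta_U+\rho$ where $|\rho|\le 1/2$ ($\rho\in[-1/2,0]$ on $U$ and $\rho\in[0,1/2]$ on $U^c$), the second summand expands as
$f\tensor f-\etta_U\tensor\etta_U=\etta_U\tensor\rho+\rho\tensor\etta_U+\rho\tensor\rho$,
so it suffices to bound the cut norm of $W$ against each of these rank-one perturbations. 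Using the test-function characterization \eqref{macduff} of the cut norm, I would apply the hypothesis iteratively in the form $|\int Wg\tensor h-\int W(fg)\tensor(fh)|<\eps$ to transfer factors of $f$ onto the small perturbation $\rho$, exploiting that $f\le 1/2$ on the support of $\rho$ in $U^c$ to obtain a geometric contraction by $1/2$ at each step. Balancing the per-iteration cost $\eps$ against the geometric contraction yields a bound of order $C\eps$ with $C$ absolute, and hence \ucrt{} with parameters $(2M,C\eps)$. The main obstacle is precisely this cut-norm step: the pointwise domination which makes part (i) immediate does not transfer to the cut norm, and any naive estimate invoking $\cn W$ fails since $\cn W$ is not controlled uniformly over $\cW$; the iterative transfer of factors of $f$ onto $\rho$, using the $1/2$-contraction on $U^c$, is the essential device that bypasses this obstacle.
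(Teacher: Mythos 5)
Part \ref{LQ2r} of your argument is correct and complete: the pointwise bound $\etta_{(U\times U)^c}\le 2\bigl(1-f(x)f(y)\bigr)$ together with Markov's inequality for $\mu(U)$ gives \urt{} with constants $(2M,2\eps)$. This is in fact slightly more direct than the paper, which runs the same geometric-series argument for both parts. The gap is in part \ref{LQ2cr}. Your decomposition $f\tensor f-\etta_U\tensor\etta_U=\etta_U\tensor\rho+\rho\tensor\etta_U+\rho\tensor\rho$ with $\rho=f-\etta_U$ does not separate cleanly: $\rho$ is supported both on $\Uc$ (where $\rho=f\le\tfrac12$ and your contraction is available) and on $U$ (where $\rho=f-1$ and $f>\tfrac12$, so multiplying by $f$ contracts by nothing better than a factor close to $1$). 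Your sketch only invokes ``$f\le 1/2$ on the support of $\rho$ in $U^c$'' and never says how the $U$-supported part is controlled; the natural attempt via $1-f(y)\le 1-f(x)f(y)$ is a pointwise domination, which (as you note yourself) does not pass to the cut norm for signed $W$. On top of this, the iteration itself --- ``balancing the per-iteration cost $\eps$ against the geometric contraction'' --- is asserted rather than carried out, and it is precisely there that the term-by-term bounding of the three $\rho$-pieces would have to be justified.

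The repair, which is the paper's proof, is to drop the $\rho$-decomposition and run the contraction directly on $a:=\cn{\etta_{\Uc}(x)W}$, where \emph{every} factor of $f$ that appears is $\le\tfrac12$. Since $\etta_{(U\times U)^c}(x,y)\le\etta_{\Uc}(x)+\etta_U(x)\etta_{\Uc}(y)$ and \eqref{malcolm} gives $\cn{\etta_U(x)\etta_{\Uc}(y)W}\le\cn{\etta_{\Uc}(y)W}$, it suffices to show $a\le 2\eps$. The paper uses the identity $\etta_{\Uc}(x)W=\sum_{k\ge0}\etta_{\Uc}(x)\bigl(f(x)f(y)\bigr)^k\bigl(1-f(x)f(y)\bigr)W$ (valid since $f(x)f(y)\le\tfrac12$ on $\Uc\times S$) and bounds the $k$-th term by $2^{-k}\cn{(1-f\tensor f)W}<2^{-k}\eps$ via \eqref{malcolm}, summing to $2\eps$. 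Equivalently, in the one-step form you were aiming at: $a\le\cn{\etta_{\Uc}(x)(1-f\tensor f)W}+\cn{(f\etta_{\Uc})(x)\,f(y)\,\etta_{\Uc}(x)W}\le\eps+\tfrac12 a$, and since $a\le\norml{W}<\infty$ this yields $a\le 2\eps$ and hence $\cn{W-W\etta_{U\times U}}<4\eps$, i.e.\ \ucrt{} with constants $(2M,4\eps)$.
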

\begin{proof}
We prove \ref{LQ2cr}; the same proof works for \ref{LQ2r}.

  If \eqref{thisbe} holds, then take $f:=\etta_U$.

Conversely, suppose that \eqref{coriolanus} holds. Define
$U:=\set{x:f(x)>\frac12}$.
Then $\mu(U)\le2\norml{f}\le2M$.
Moreover,
\begin{equation}
  \begin{split}
\etta_{\Uc}(x)W(x,y)
=\sumko\etta_{\Uc}(x)\bigpar{f(x)f(y)}^k\bigpar{1-f(x)f(y)}W(x,y)
  \end{split}
\end{equation}
and thus, using \eqref{malcolm} and \eqref{coriolanus},
\begin{equation}
  \begin{split}
\bigcn{\etta_{\Uc}(x)&W(x,y)}
\le\sumko\Bigcn{\etta_{\Uc}(x)\bigpar{f(x)f(y)}^k\bigpar{1-f(x)f(y)}W(x,y)}
\\&
\le\sumko\sup_x\bigpar{\etta_{\Uc}(x)f(x)^k}\sup_y\bigpar{ f(y)^k}
\Bigcn{\bigpar{1-f(x)f(y)}W(x,y)}
\\&
<\sumko2^{-k}\eps=2\eps.
  \end{split}
\raisetag{1.5\baselineskip}
\end{equation}
By symmetry also 
$\bigcn{\etta_{\Uc}(y)W(x,y)}<2\eps$, and thus
\begin{equation}
  \begin{split}
	\bigcn{W-\etta_{U\times U}W}
&\le \bigcn{\etta_{\Uc}(x)W(x,y)}
+ \bigcn{\etta_U(x)\etta_{\Uc}(y)W(x,y)}
\\& 
\le \bigcn{\etta_{\Uc}(x)W(x,y)}
+ \bigcn{\etta_{\Uc}(y)W(x,y)}
\\&
<4\eps.
 \end{split}
\end{equation}
Consequently, \eqref{thisbe} holds, with $(M,\eps)$ replaced by $(2M,4\eps)$.
\end{proof}

We say that two sets of graphons are equivalent if every graphon in one of
the sets is equivalent to some graphon in the other set.

\begin{lemma}\label{LQ3}
  Let $\cW$ and $\cW'$ be two equivalent sets of graphons.
  \begin{romenumerate}
  \item \label{LQ3r}
If\/ $\cW$ has \urt, then so has $\cW'$, and conversely.
  \item \label{LQ3cr}
If\/ $\cW$ has \ucrt, then so has $\cW'$, and conversely.
  \end{romenumerate}
\end{lemma}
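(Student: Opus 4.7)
My plan is to reduce the lemma, via \refT{T=}, to verifying that both \urt{} and \ucrt{} are invariant under each of the two elementary equivalence operations of \refD{Dee}. Each of those operations is symmetric (by the ``or conversely'' clause), so this gives four cases: transferring a ``good set'' across a trivial extension in either direction, and across a pull-back in either direction.

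Three of these cases are easy. For trivial extensions, if $(\tW, \tS)$ trivially extends $(W, S)$, then a good set $U \subseteq S$ for $W$ remains good for $\tW$, since $\tW - \tW\etta_{\qx U}$ vanishes off $\qx S$ and there equals $W - W\etta_{\qx U}$, so both the $L^1$ and cut norms coincide; conversely, a good $U \subseteq \tS$ for $\tW$ yields $U \cap S$ good for $W$, since restricting to $\qx S$ cannot increase either norm. For a pull-back $W_1 = W_2^\gf$ with $\gf:S_1\to S_2$ \mpp, a good set $U_2 \subseteq S_2$ for $W_2$ pulls back to $U_1 := \gf\qw(U_2)$; then $\mu_1(U_1) = \mu_2(U_2) \le M$, and both norms are preserved under pull-back (see \eqref{dunsinane}).

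The main obstacle is the converse direction of pull-back: given a good $U_1 \subseteq S_1$ for $W_1 = W_2^\gf$, produce a witness on $S_2$ for $W_2$. Here I invoke \refL{LQ2} to produce a function rather than a set. Let $\nu := (\mu_1 \restr{U_1})^\gf$ be the push-forward to $S_2$ of the restriction of $\mu_1$ to $U_1$; since $\gf$ is \mpp, one has $\nu \le \mu_2$ as measures, so $\nu \ll \mu_2$ with Radon--Nikodym density $f := d\nu/d\mu_2$ taking values in $\oi$, and $\int f \dd\mu_2 = \nu(S_2) = \mu_1(U_1) \le M$. A direct Fubini computation then yields the identity
\begin{equation*}
\int_{\qx{S_1}} W_2^\gf(x,y) g(\gf(x)) h(\gf(y)) \etta_{U_1}(x)\etta_{U_1}(y) \dd\mu_1(x)\dd\mu_1(y)
= \int_{\qx{S_2}} W_2(u,v) g(u) h(v) f(u) f(v) \dd\mu_2(u)\dd\mu_2(v)
\end{equation*}
for all measurable $g, h: S_2 \to \oi$. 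Specialising to $g = h = 1$ gives $\norml{W_2 - (f\tensor f) W_2} = \norml{W_1 - W_1 \etta_{U_1 \times U_1}}$; combining with \eqref{macduff} gives $\cn{W_2 - (f \tensor f) W_2} \le \cn{W_1 - W_1 \etta_{U_1 \times U_1}}$. By \refL{LQ2}, this suffices to witness \urt{} (resp.\ \ucrt) for $W_2$.

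With the transfer established in each elementary case, I walk any chain of elementary equivalences between a graphon in $\cW$ and an equivalent one in $\cW'$, tracking a function witness through each step (starting as an indicator, possibly becoming a general $\oi$-valued function after a ``hard-direction'' pull-back, and converted back to indicator form only at the end via \refL{LQ2}). Along the chain, the bounds $\int f \dd\mu \le M$ and the tail norm $<\eps$ are preserved exactly, so the final parameters depend only on $\eps, M$ and the universal constants from \refL{LQ2}, not on the length of the chain; both implications of the lemma then follow.
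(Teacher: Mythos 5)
Your proposal is correct and follows essentially the same route as the paper's proof: reduce to elementary equivalences via \refT{T=}, transfer a witness across trivial extensions and pull-backs, and handle the hard pull-back direction by averaging the witness over the fibres of $\gf$ --- your Radon--Nikodym density $f=\ddx\nu/\ddx\mu_2$ on $S_2$ is precisely the paper's function $g$ with $g\qgf=\E\bigpar{\etta_{U_1}\mid\gf\qw(\cF_2)}$, and both arguments invoke \eqref{macduff} and \refL{LQ2} at the same points. The only nitpick is that the equality $\norml{W_2-(f\tensor f)W_2}=\norml{W_1-W_1\etta_{\qx{U_1}}}$ is not literally the case $g=h=1$ of your displayed identity but the same push-forward identity applied to $|W_2|$ (both sides being integrals of nonnegative integrands, this is immediate).
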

A special case of \ref{LQ3r} is given in \cite[Lemma 6.1]{BCCH16}.
\begin{proof}
  Again, the same proof works for both parts; we choose \ref{LQ3cr}.

We show that for every $\eps$ and $M$,
if $W$ and $W'$ are two equivalent graphons and one of them satisfies
\eqref{coriolanus} for 
some $f$ as in \refL{LQ2}, there so does the other.
The result then follows by \refL{LQ2}.

By \refT{T=}, it suffices to consider the case when $W$ and $W'$ are \eeq.
The case of a trivial extension is trivial, and thus it
suffices to consider the case of a pull-back $W'=W\qgf$ for some \mpm{}
$\gf:(S_1,\cF_1,\mu_1)\to (S_2,\cF_2,\mu_2)$.

First, if \eqref{coriolanus} holds, then using \eqref{dunsinane},
\begin{equation}
\cnx{\mu_1}{W\qgf-(f\qgf\tensor f\qgf)W\qgf}
=\cnx{\mu_2}{W-(f\tensor f)W}<\eps,
\end{equation}
and $f\qgf:S_1\to\oi$ with $\int f\qgf\dd\mu_1=\int f\dd\mu_2\le M$.

Conversely, suppose that
$\cnx{\mu_1}{W\qgf-(f\tensor f)W\qgf}<\eps$ for some $f:S_1\to\oi$ with
$\int_{S_1} f\le M$.
Let $\cF':=\gf\qw(\cF_2):=\set{\gf\qw(A):A\in\cF_2}$, and let $f':=\E(f\mid
\cF')$. (Although conditional expectations usually are defined for
probability spaces only, there is no problem to extend the definition to
\gsf{} measure spaces, for example by considering a partition $S=\bigcup_k
  S_k$ into subsets $S_k\in\cF'$ with finite measure.)
Since $f'$ is measurable for $\cF'=\gf\qw(\cF_2)$, $f'=g\qgf$ for some
$g:S_2\to\oi$
with 
\begin{equation}
  \int_{S_2}g\dd\mu_2
=\int_{S_1}g\qgf\dd\mu_1=\int_{S_1}f'\dd\mu_1=\int_{S_1}f\dd\mu_1\le M.
\end{equation}
Furthermore, $W\qgf$ is $\qxp{\cF'}$-measurable, and thus
$\E\bigpar{(f\tensor f)W\qgf\mid \qx{\cF'}}=(f'\tensor f')W\qgf$.
Consequently,
\begin{equation}
  \begin{split}
  \cnx{\mu_2}{W-(g\tensor g)W}
&=
  \cnx{\mu_1}{\bigpar{W-(g\tensor g)W}\qgf}	
\\&
=
  \cnx{\mu_1}{W\qgf-(f'\tensor f')W\qgf}
\\&
=
  \cnx{\mu_1}{\E\bigpar{W\qgf-(f\tensor f)W\qgf\mid \cF'\times\cF'}}
\\&
\le
  \cnx{\mu_1}{W\qgf-(f\tensor f)W\qgf}
<\eps
  \end{split}
\end{equation}
where the last but one inequality easily follows from \eqref{macduff}.
\end{proof}

\begin{proof}[Proof of \refT{TD}]
  \ref{TDtb}$\implies$\ref{TD2}.
First, note that \refL{LQ3} shows that if we replace $\cW$ by an equivalent
set of graphons, then \ref{TD2} is preserved. 
Thus, using \refP{P2.8}
we may assume that every graphon $W$ in $\cW$ is defined
on $\bbRp$.

Let $\eps>0$. By assumption, there exists a finite $\eps$-net
$\set{W_i}_{i=1}^N$ for $\cW$; we may assume that also every $W_i$ is
defined on $\bbRp$.
Thus, if $W\in\cW$, then there exists $W_i$ such that $\dcut(W,W_i)<\eps$,
and thus by \refP{P4.3c} a \mpb{} $\gf:\bbRp\to\bbRp$
such that 
\begin{equation}
  \label{antonio}
\cn{W-W_i\qgf}<\eps.
\end{equation}

Next,  every $W_i$ is integrable, and thus we may find a set $U_i$ of finite
measure such that $\cn{W_i-W_i\ettaqx{U_i}}\le\norml{W_i-W_i\ettaqx{U_i}}<\eps$
 and a number $B_i$ such that if $\bW_i:=W_i\etta_{|W_i|\le B_i}$, then 
$\cn{W_i-\bW_i}\le\norml{W_i-\bW_i}<\eps$.
Let $M:=\max_{i\le N}\mu(U_i)$ and $B:=\max_{i\le N} B_i$.

If \eqref{antonio} holds, then
let $U:=\gf\qw(U_i)$. Then 
$W_i\qgf\ettaqx U=W_i\qgf\ettaqx{U_i}\qgf=(W_i\ettaqx{U_i})\qgf$ and thus
\begin{equation}
  \begin{split}
&\cn{W-W\ettaqx U}
\\&\qquad
\le\cn{W-W_i\qgf}+\cn{(W_i-W_i\ettaqx{U_i})\qgf}+\cn{(W_i\qgf-W)\ettaqx{U}}
\\&\qquad
<\eps+\cn{W_i-W_i\ettaqx{U_i}}+\eps
< 3\eps.
  \end{split}
\raisetag\baselineskip
\end{equation}
Similarly,
\begin{equation}
  \cn{W-\bW_i\qgf}
\le  \cn{W-W_i\qgf}+ \cn{(W_i-\bW_i)\qgf}
<\eps+\cn{W_i-\bW_i}<2\eps.
\end{equation}
Since $\leb(U)=\leb(U_i)\le M$ and $|\bW_i\qgf|\le B_i\le B$,
it follows that $\cW$ has \ucrt{} and is \ucr.

  \ref{TD2}$\implies$\ref{TD1}.
Given $\eps>0$, let $M$ and $B$ be as in the definitions of \ucrt{} and
\ucr; thus if $W=(W,S,\mu)\in\cW$, there exist $U\subseteq S$ with
$\mu(U)\le M$ such that \eqref{thisbe} holds and a graphon $V$ on $S$ with
$|V|\le B$ such that \eqref{quince} holds.
Let $V':=V\etta_{\qx U}$. Then
$|V'|\le B\ettaqx U$ and
\begin{equation}
  \begin{split}
	  \cn{W-V'}&
\le \cn{W-W\etta_{\qx U}}+\cn{(W-V)\ettaqx{U}}
\\&
\le \cn{W-W\etta_{\qx U}}+\cn{W-V}<2\eps.
  \end{split}
\end{equation}
Thus \ref{TD1} follows.


\ref{TD1}$\implies$\ref{TDtb}.
Let $\eps>0$ and let $M$ and $B$ be as in \ref{TD1}.
Let $\cW(M, B)$ be the set of all graphons $(V,S,\mu)$ with $|V|\le B$
and $\mu(S)= M$.
If $W$, $U$ and $V$ are as in \ref{TD1}, then $V$ is a trivial extension of the
restriction $V_U:=V\restr{\qx U}$, which is defined on $U$ with $\mu(U)\le
M$, and thus there exists 
a trivial extension $V_U'$ of $V_U$ to a measure space $(S',\mu')$ with
$\mu'(S')=M$. Then $V_U'\in\cW(M,B)$; furthermore,
\begin{equation}\label{toby}
\dcut(W,V_U')=
  \dcut(W,V_U)=\dcut(W,V)\le\cn{W-V}<\eps.
\end{equation}

Next, the mapping $\Psi:V\mapsto\bigpar{\stri{V}{M\qww}+B}/(2B)$ maps $\cW(M,B)$
bijectively onto the set of \oi-valued graphons defined on probability
spaces, see \eqref{str1}, 
and $\Psi$ is a homeomorphism for the cut metric. (In fact, 
$\dcut(V,V')=2BM^2\dcut(\Psi(V),\Psi(V'))$ for all $V,V'\in\cW(M,B)$.)
As is well-known from the standard theory of graphons on probability spaces
(or on $\oi$), the latter set is compact for the cut metric
\cite{LSz:Sz}. Consequently, $\cW(M,B)$ is compact for the cut metric and
thus totally bounded. Hence, there exists a finite $\eps$-net
$\set{V_i}_{i=1}^N$ for $\cW(M,B)$.

We have shown in \eqref{toby} that if $W\in\cW $, then there exists
a graphon $V'_U\in\cW(M,B)$ such that $\dcut(W,V'_U)<\eps$; furthermore, since
$\set{V_i}$ is an $\eps$-net for $\cW(M,B)$, there exists $V_i$ such that
$\dcut(V'_U,V_i)<\eps$. Hence, $\dcut(W,V_i)<2\eps$, and it follows that
$\set{V_i}_1^N$ is a finite $2\eps$-net for $\cW$.
Since $\eps$ is arbitrary, $\cW$ is totally bounded.
\end{proof}

\begin{proof}[Proof of \refL{Lps}]
\ref{Lpstr}$\iff$\ref{LpsU}.
Clear by the definition of trival extension.

\ref{LpsU}$\implies$\ref{Lpsf}.
Take $f:=\etta_U$.

\ref{Lpsf}$\implies$\ref{LpsU}.
Take $U:=\set{x:f(x)=1}$.

\ref{Lpstr}$\implies$\ref{Lps=}.
If $W$ is a trivial extension of $(W',S',\mu')$ with $\mu'(S')\le1$, then
$W'$ has a trivial extension $W''$ defined on a probability space, and
$W\cong W'\cong W''$.

\ref{Lps=}$\implies$\ref{Lpsf}.
This follows by the same proof as for \refL{LQ3}, with $M=1$ and '$<\eps$'
replaced by '$=0$' (or, equivalently, for all $\eps$ simultaneously),
recalling \eqref{cawdor}.
\end{proof}

\begin{proof}[Proof of \refL{Loi2}]
As usual, we may 
by \refP{P2.8} replace the 
graphons by equivalent graphons on $(\bbRp,\leb)$ and  assume that
$W$ and every $W_n$ is defined on $\bbRp$.
By \refP{P4.3c}, this implies the existence of
pull-backs $W_n':=W_n\qphi n\cong W_n$ such that $\cn{W-W_n'}
< \dcut(W,W_n)+1/n\to0$ as \ntoo.

By \refL{Lps}, there exist sets $U_n\subset\bbRp$ with $\leb(U_n)\le1$ such
that $W_n'=W_n'\ettaqx{U_n}$ a.e.
Hence,
\begin{equation}\label{rosalind}
  \cn{W-W\ettaqx{U_n}}
\le
  \cn{W-W_n'}+  \cn{\ettaqx{U_n}(W_n'-W)}
\le 2\cn{W-W_n'}\to0,
\end{equation}
as \ntoo.

The unit ball of $L^\infty(\bbRp)=L^1(\bbRp)^*$ is weak-$*$ compact and
metrizable (since $L^1(\bbRp)$ is separable) 
\cite[Theorems V.4.2 and V.5.1]{DS}.
Hence, by considering a subsequence, we may assume that $\etta_{U_n}\wxto f$
for some $f\in L^\infty(\bbRp)$ with $|f|\le1$; furthermore, this implies
$f:\bbRp\to\oi$. For any $T\subset\bbRp$ with $\leb(T)<\infty$, 
$\int_T f=\int_{\bbRp} \etta_T f=\lim\int_{\bbRp}\etta_T\etta_{U_n}\le1$;
hence, by monotone 
convergence, $\int_{\bbRp} f\le1$.

Moreover.
it follows, since $L^1(\bbRp)\tensor L^1(\bbRp)$ is dense in $L^1(\bbRp^2)$,
that $\ettaqx{U_n}=\etta_{U_n}\tensor\etta_{U_n}\wxto f\tensor f$ in
$L^\infty(\bbRp^2)$.
Hence, for any measurable sets $T,U\subseteq\bbRp$,
\begin{equation}
  \begin{split}
	  \int_{T\times U} W\ettaqx{U_n}&
=
\int_{\bbRp^2} W\etta_{T\times U} \ettaqx{U_n}
\\&
\to
\int_{\bbRp^2} W\etta_{T\times U} (f\tensor f)
=\int_{T\times U}(f\tensor f) W.
  \end{split}
\end{equation}
On the other hand, \eqref{rosalind} implies
\begin{equation}
  \int_{T\times U}\bigpar{W\ettaqx{U_n}-W}\to0.
\end{equation}
Consequently,
$\int_{T\times U}(f\tensor f) W =\int_{T\times U}W$ for all $T,U\subseteq
S$, and thus, by \eqref{cn}, 
$\cn{(f\tensor f)W-W}=0$, so by \eqref{cawdor},  $(f\tensor f)W=W$ a.e.
The result follows by \refL{Lps}\ref{Lpsf}$\implies$\ref{Lps=}.
\end{proof}

This completes the proof of the results in
\refS{Scompact}.

\section*{Acknowledgement}
This work was partly carried out during a visit to the 
Isaac Newton Institute for Mathematical Sciences
during the programme 
Theoretical Foundations for Statistical Network Analysis in 2016
(EPSCR Grant Number EP/K032208/1)
and was partially supported by a grant from the Simons foundation, and
a grant from
the Knut and Alice Wallenberg Foundation.
I also thank Christian Borgs, Henry Cohn, Jennifer Chayes, 
Nina Holden,
Laci Lovasz
and Daniel Roy for helpful
discussions and comments that have improved the paper.

\newcommand\AAP{\emph{Adv. Appl. Probab.} }
\newcommand\JAP{\emph{J. Appl. Probab.} }
\newcommand\JAMS{\emph{J. \AMS} }
\newcommand\MAMS{\emph{Memoirs \AMS} }
\newcommand\PAMS{\emph{Proc. \AMS} }
\newcommand\TAMS{\emph{Trans. \AMS} }
\newcommand\AnnMS{\emph{Ann. Math. Statist.} }
\newcommand\AnnPr{\emph{Ann. Probab.} }
\newcommand\CPC{\emph{Combin. Probab. Comput.} }
\newcommand\JMAA{\emph{J. Math. Anal. Appl.} }
\newcommand\RSA{\emph{Random Struct. Alg.} }
\newcommand\ZW{\emph{Z. Wahrsch. Verw. Gebiete} }

\newcommand\AMS{Amer. Math. Soc.}
\newcommand\Springer{Springer-Verlag}
\newcommand\Wiley{Wiley}

\newcommand\vol{\textbf}
\newcommand\jour{\emph}
\newcommand\book{\emph}
\newcommand\inbook{\emph}
\def\no#1#2,{\unskip#2, no. #1,} 
\newcommand\toappear{\unskip, to appear}

\newcommand\arxiv[1]{\texttt{arXiv:#1}}
\newcommand\arXiv{\arxiv}

\def\nobibitem#1\par{}

\end{document}